\documentclass[10pt]{amsart}
\pdfoutput=1
\usepackage{amsmath}
\usepackage{amsfonts}
\usepackage{amssymb}
\usepackage{amsthm}
\usepackage{hyperref}
\usepackage{comment}
\usepackage{float}
\usepackage{youngtab}
\usepackage{ytableau}
\usepackage{rotating}
\usepackage{xcolor}
\usepackage{bbding}
\usepackage{tikz}
\usepackage{tikz-cd}
\usepackage[paper=a4paper, margin=3.5cm]{geometry}





%
%

\def\ZZ{{\NZQ Z}}
\def\RR{{\NZQ R}}

\def\PP{{\NZQ P}}

%
%
\def\frk{\frak}               

\def\Phi{{\frk n}}
\def\Phi{{\frk N}}
%

%

%
\def\opn#1#2{\def#1{\operatorname{#2}}} 
%
\opn\chara{char} \opn\length{\ell} \opn\pd{pd} \opn\rk{rk}
\opn\projdim{proj\,dim} \opn\injdim{inj\,dim} \opn\rank{rank}
\opn\spn{span}\opn\Seg{Seg}
\opn\depth{depth} \opn\grade{grade} \opn\height{height}
\opn\embdim{emb\,dim} \opn\codim{codim}

\opn\Tr{Tr} \opn\bigrank{big\,rank}
\opn\superheight{superheight}\opn\lcm{lcm}
\opn\trdeg{tr\,deg}
\opn\reg{reg} \opn\lreg{lreg} \opn\ini{in} \opn\lpd{lpd}
\opn\size{size}\opn\bigsize{bigsize}
\opn\cosize{cosize}\opn\bigcosize{bigcosize}
\opn\sdepth{sdepth}\opn\sreg{sreg}
\opn\link{link}\opn\fdepth{fdepth} \opn\trdeg{trdeg} \opn\mod{mod}
\opn\spann{span}
%
\opn\div{div} \opn\Div{Div} \opn\cl{cl} \opn\Cl{Cl}
%
%
\opn\Spec{Spec} \opn\Supp{Supp} \opn\supp{supp} \opn\Sing{Sing}
\opn\Ass{Ass} \opn\Min{Min}\opn\Mon{Mon} \opn\dstab{dstab} \opn\astab{astab}
\opn\Syz{Syz}
%
%
\opn\Ann{Ann} \opn\Rad{Rad} \opn\Soc{Soc} \opn\Aut{Aut}
%
%
\opn\Im{Im} \opn\Ker{Ker} \opn\Coker{Coker} \opn\Am{Am}
\opn\Hom{Hom} \opn\Tor{Tor} \opn\Ext{Ext} \opn\End{End}
\opn\Aut{Aut} \opn\id{id}

\opn\nat{nat}
\opn\pff{pf}
\opn\Pf{Pf} \opn\GL{GL} \opn\SL{SL} \opn\mod{mod} \opn\ord{ord}
\opn\Gin{Gin} \opn\Hilb{Hilb}\opn\sort{sort}
\opn\S{S} \opn\dim{dim} \opn\supp{supp}\opn\trdeg{trdeg}\opn\sort{sort}
%
%
\opn\aff{aff} \opn\con{conv} \opn\relint{relint} \opn\st{st}
\opn\lk{lk} \opn\cn{cn} \opn\core{core} \opn\vol{vol}
\opn\link{link} \opn\star{star}\opn\lex{lex}
\opn\conv{conv} \opn\Ehr{Ehr}\opn\Pic{Pic}
\opn\Conv{Conv}
\opn\gr{gr}

%
%

\def\pot#1#2{#1[\kern-0.28ex[#2]\kern-0.28ex]}

%
%
\opn\dirlim{\underrightarrow{\lim}}
\opn\inivlim{\underleftarrow{\lim}}
%
%
%

%
%

\def\Implies{\ifmmode\Longrightarrow \else
        \unskip${}\Longrightarrow{}$\ignorespaces\fi}
\def\implies{\ifmmode\Rightarrow \else
        \unskip${}\Rightarrow{}$\ignorespaces\fi}
\def\iff{\ifmmode\Longleftrightarrow \else
        \unskip${}\Longleftrightarrow{}$\ignorespaces\fi}

\let\:=\colon

\newtheorem{Theorem}{Theorem}[section]
 \newtheorem{Lemma}[Theorem]{Lemma}
 \newtheorem{Corollary}[Theorem]{Corollary}
 
 \newtheorem{Remark}[Theorem]{Remark}

 \newtheorem{Definition}[Theorem]{Definition}


\def\RR{\mathbb{R}}
\def\ZZ{\mathbb{Z}}
\def\PP{\mathbb{P}}

\def\fC{\mathcal{C}}

\def\zz{\ZZ_2\times \ZZ_2}

\begin{document}
 \title {Phylogenetic degrees for claw trees}
\keywords {group-based model, algebraic degree of a variety, polytope, volume}
 
 \author{Rodica Andreea Dinu}
\address{%
	University of Konstanz, Fachbereich Mathematik und Statistik, Fach D 197 D-78457 Konstanz, Germany, and Simion Stoilow Institute of Mathematics of the Romanian Academy, Calea Grivitei 21, 010702, Bucharest, Romania}
	\email{rodica.dinu@uni-konstanz.de}

\author{Martin Vodi\v{c}ka}
\address{Šafárik University, Faculty of Science, Jesenná 5, 04154 Košice, Slovakia}
	\email{martin.vodicka@upjs.sk}

\maketitle

 \begin{abstract}
Group-based models appear in algebraic statistics as mathematical models coming from evolutionary biology, respectively the study of mutations of organisms.  Both theoretically and in terms of applications, we are interested in determining the algebraic degrees of the phylogenetic varieties coming from these models. These algebraic degrees are called {\em phylogenetic degrees}. In this paper, we compute the phylogenetic degree of the variety $X_{G, n}$ with $G\in\{\ZZ_2,\zz, \ZZ_3\}$ and any $n$-claw tree. As these varieties are toric, computing their phylogenetic degree relies on computing the volume of their associated polytopes $P_{G,n}$. We apply combinatorial methods and we give concrete formulas for them.
 \end{abstract}

 \maketitle

\section{Introduction}

Evolution is the process by which modern organisms have descended from ancient ancestors \cite{fels}. Evolutionary change ultimately relies on the mutations of organisms.
Phylogenetic trees have become objects of interest for study, given the desire to understand as much as possible about evolutionary biology. They have also become of interest to mathematicians, as relationships have been found between these objects - in biology - and algebraic varieties - in algebraic geometry. 
These varieties are represented by a phylogenetic tree and an evolutionary model. In such a tree, each edge corresponds to a mutation. We consider the probabilities of the different mutations as entries of a matrix, which we call the transition matrix. Phylogeneticists distinguish certain types of matrices specific to an evolutionary model. By fixing a particular type of transition matrix, we obtain a probability distribution on the set of states of the species of interest. We get an algebraic map by fixing a model and then varying the entries of the transition matrices to obtain different probability distributions. The closure of the image of this map is a variety, that we call a {\em phylogenetic variety}. We will assume that an abelian group $G$ acts transitively and freely on the set of states. A general group-based model is a maximal subspace of transition matrices invariant under the group action. A subspace of this space is called a {\em group-based model}. For more details regarding group-based models and their geometry, the reader may consult: \cite{mateusz, mateusztor, martinko, RM}. It is known that the varieties coming from group-based models are toric, so they contain algebraic torus as a dense subset \cite{evans, sz}. There is no classification of the normality of these varieties: it is known only that when $G\in \{\ZZ_2, \zz, \ZZ_3\}$ the corresponding phylogenetic varieties to any tree are normal, see \cite{martinko, RM}, and when $|G|=2k$, $k\leq 3$ the corresponding phylogenetic variety for any tree is not normal, see \cite[Proposition~2.1]{RM}.
An important fact while working with phylogenetic varieties coming from group-based models is that the defining ideals of these varieties associated to any phylogenetic tree can be seen as toric fiber products of the defining ideals of the phylogenetic varieties associated to claw trees (i.e. trees that have only one node and $n$ leaves), \cite{seth}. This fact shows that, in some cases, one can reduce checking a property for the phylogenetic variety associated to any phylogenetic tree to checking that property only in the case when the tree is a claw tree, see \cite{draisma1} and \cite[Lemma~5.1]{mateusz}. As the Gorenstein property behaves well with respect to toric fiber products, a classification of Gorenstein Fano phylogenetic varieties coming from any $G\in \{\ZZ_2, \zz, \ZZ_3\}$ and (trivalent) tree was obtained in \cite[Theorem 5.1]{RM}. The group $\zz$ is one of the greatest biological significance, as it corresponds to the action given by the Watson-Crick complementary. Its corresponding group-based model is also called {\em the 3-Kimura parameter model}, see \cite{kimura}.

In this paper, we are interested in investigating the algebraic degrees of the phylogenetic varieties coming from group-based models. We call them {\em phylogenetic degrees}. In the literature, the phylogenetic degrees are known the phylogenetic degree only for some specific small computational examples, see \cite{examples}. As already mentioned, these varieties are toric, hence computing the phylogenetic degrees can be reduced to computing the volume of the associated polytopes, see \cite[Section~5.3]{fulton}. For a comprehensive monograph on triangulations and computing volumes of polytopes, see \cite{triangbook}.

We present the structure of this paper. In Section~\ref{prel}, we give an introduction to constructing varieties coming from phylogenetic trees and their associated polytopes. Then, each section is dedicated to the study of the phylogenetic degree of the variety associated to any claw tree and one of the groups: $G\in \{\ZZ_2, \zz, \ZZ_3\}$ as follows.\\

In Section~\ref{1}, the main result is Theorem~\ref{phylodegZ2}, from which we obtain a formula for computing the phylogenetic degree of the variety associated to the group $\ZZ_2$ and any $n$-claw tree:\\

{\bf Theorem.} The phylogenetic degree of the projective algebraic variety $X_{\ZZ_2,n}$ is $$\frac{n!}{2}-2^{n-2}.$$

In Section~\ref{2}, the main result is Theorem~\ref{phylodegZ2xZ2} from which we give a formula for computing the phylogenetic degree of the variety associated to the group $\zz$ and any $n$-claw tree:\\

{\bf Theorem.} The phylogenetic degree of the projective algebraic variety $X_{\zz,n}$ is equal to $$\frac{(3n)!}{4\cdot 6^n}-3\cdot 2^{n-3}\cdot \sum_{i=0}^n (-2)^i \binom ni \frac{(3n)!}{(2n+i)!} + 3\cdot 4^{n-2}\binom{2n}{n}-n\cdot 4^{n-1}.$$

In Section~\ref{3}, the main result is Theorem~\ref{phylodegZ3} from which we obtain a formula for computing the phylogenetic degree of the variety associated to the group $\ZZ_3$ and any $n$-claw tree:\\

{\bf Theorem.} The phylogenetic degree of the projective algebraic variety $X_{\ZZ_3,n}$ is equal to $$\frac{(2n)!}{3\cdot 2^n}-2^{n+1}\cdot 3^{n-2}+3^{n-1}\cdot n.$$    

In all three cases, we apply the same combinatorial strategy: we regard the corresponding polytopes as embedded in higher dimensional cubes and we cut off parts that do not lie in our polytopes. As these parts often intersect each other, we apply the principle of inclusion and exclusion in order to determine their volume, and moreover, the desired phylogenetic degrees.

\section{Preliminaries}\label{prel}

In this section, we introduce the notation and preliminaries used in the article. For more details, the reader may consult \cite{ss, h-rep, RM} and the references therein.
\subsection{Phylogenetic trees}\label{phylotrees}
We introduce the algebraic variety associated to a phylogenetic tree and a group, and its defining polytope. From an algebraic-geometrically point of view, the roots of these constructions may be found in \cite{erss}. More details can be consulted in \cite{bw}.

 Let $T$ be a rooted tree with a set of vertices $V=V(T)$ and a set of edges $E=E(T)$. We direct the edges of $T$ away from the root. A vertex $v$ is called a {\it leaf} if its valency is 1, otherwise it is called a {\it node}.
 The set of all leaves is denoted by $L$ and the set of all nodes is denoted by $N$. Let $S=\{\alpha_0, \alpha_1, \dots\}$ be a finite set, that is usually called the set of states, and let $W$ be a finite-dimensional vector space spanned freely by elements of $S$. Let $\widehat{W}$ be a subspace of $W\otimes W$. Any element of $\widehat{W}$ can be represented as a matrix in the basis corresponding to $S$ and this matrix is called a {\it transition matrix}. We associate to any vertex $v\in V$ a complex vector space $W_v \simeq W$, and we associate to any edge $e\in E$ a subspace $\widehat{W}_e \subset W_{v_1(e)}\otimes W_{v_2(e)}$, $\widehat{W}_e \simeq \widehat{W}$, with $e$ is directed edge from the vertex $v_1(e)$ to $v_2(e)$.
We call the tuple $(T, W, \widehat{W})$ a \textit{phylogenetic tree}.

 \subsection{A variety associated to a group-based model.}
We define the following spaces:
\[
W_V=\bigotimes_{v\in V} W_v,\;\;  W_L=\bigotimes_{l\in L} W_l,\;\;  \widehat{W}_E= \bigotimes_{e\in E} \widehat{W}_e.
\]
We call $W_V$ the space of all possible states of the tree, $W_L$ the space of states of leaves, and $\widehat{W}_E$ the parameter space.

\begin{Definition} \cite[Construction 1.5]{bw}
We consider a linear map $\widehat{\psi}: \widehat{W}_E \rightarrow W_V$ whose dual is defined as
\[
\widehat{\psi}^{*}(\bigotimes_{v\in V} \alpha_v^{*})= \bigotimes_{e\in E}(\alpha_{v_1(e)}\otimes \alpha_{v_2(e)})^{*}_{| \widehat{W}_e},
\]
where $\alpha_v$ is an element of $S$.
\end{Definition}
The map $\widehat{\psi}$ associates to a given choice of matrices the probability distribution on the set of all possible states of vertices of the tree.\\
Its associated multi-linear map $\widetilde{\psi}: \prod_{e\in E}\widehat{W}_e \rightarrow W_V$ induces a rational map of projective varieties
\[
\psi: \prod_{e\in E}(\PP(\widehat{W}_e))\dashrightarrow \PP(W_V).
\]
We consider the map $\pi: W_V \rightarrow W_L$ which is defined as
\[
\pi=(\otimes_{v\in L}\id_{W_v})\otimes (\otimes_{v\in N} \sigma_{W_v}),
\]
with $\sigma=\sum \alpha_i^{*} \in W^{*}$ which sums up all the coordinates. The map $\pi$ sums up the probabilities of the states of vertices which are different only in nodes.

\begin{Definition}\label{variety}
The image of the composition $\pi \circ \psi$ is an affine subvariety in $W_L$ and it is called the affine variety of a phylogenetic tree $(T, W, \widehat{W})$. By $X_T$ we denote its underlying projective variety in $\PP(W_L)$.
\end{Definition}

We will assume that an abelian group $G$ acts transitively and freely on the set $S$. We define $\widehat{W}$ to be the set of fixed points of the $G$ action on $W\otimes W$. In this case, the space $\mathcal{M}$ of transition matrices is given as a subspace invariant under the action of the group $G$. This choice of $\widehat{W}$ leads to the so-called {\it group-based models}. For more details about these models, the reader may consult \cite{ss, erss} and \cite{mateusztor}. We point out that the variety in Definition~\ref{variety} depends on the choice of the abelian group $G$ and for this reason, we will denote it as $X_{G, T}$ and call it the \textit{group-based phylogenetic variety}.

\subsection{The defining polytope.}
As the group-based phylogenetic variety $X_{G,T}$ is a projective toric variety, by \cite{ss}, instead of looking at the variety $X_{G,T}$, we can study the associated polytope associated to this projective toric variety. We denote it by $P_{G,T}$. When the tree, $T$, is the $n$-claw tree, we denote the corresponding polytope by $P_{G,n}$.


We introduce some notation that will be of use in the next sections, when working with polytopes $P_{G,n}\subset\mathbb R^{n|G|}$. We label the coordinates of a point $x\in\mathbb R^{n|G|}$ by $x_g^j$, where $1\le j\le n$ corresponds to an edge of the tree, and $g\in G$ corresponds to a group element. For any point $x\in \mathbb Z_{\ge 0}^{n|G|}$ we define its $G$-presentation as an $n$-tuple $(G_1,\dots,G_n)$ of multisets of elements of $G$. Every element $g\in G$ appears exactly $x_g^j$ times in the multiset $G_j$. We denote by $x(G_1,\dots,G_n)$ the point with the corresponding $G$-presentation. In the case where all multisets contain exactly one element we may simply say that the $G$-presentation is just an $n$-tuple of elements of $G$.

The vertex description of the polytopes $P_{G,n}$ is known and may be consulted in great detail in \cite{bw, mateusz, ss}.
We recall this description and we formulate it in the language of $G$-presentations.

\begin{Theorem} 
The vertices of the polytope $P_{G, n}$ associated to the finite abelian group $G$ and the $n$-claw tree are exactly the points $x(G_1,\dots,G_n)$ with $G_1+\dots+G_n=0$.

Let $L_{G, n}$ be the lattice generated by vertices of $P_{G,n}$. Then
 $$L_{G,n}=\{x\in \mathbb Z^{n|G|}:\sum_{g,j}x_g^j\cdot g=0,
\forall \text{ } 1\le j,j' \le n, \sum_g x_g^j=\sum_g x_g^{j'} \}.$$
where the first sum is taken in the group $G$.
\end{Theorem}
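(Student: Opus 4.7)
The theorem packages two assertions: the combinatorial description of the vertices of $P_{G,n}$, and an intrinsic characterization of the lattice they generate. The vertex part is essentially the standard output of the Fourier transform for group-based models, recorded in \cite{bw, mateusz, ss}; my plan is to recall it briefly and concentrate on the lattice part. After diagonalizing the $G$-action on each $\widehat{W}_e$, the map $\pi\circ\psi$ becomes a monomial parameterization: only the Fourier coordinates indexed by $(g_1,\dots,g_n)$ with $g_1+\dots+g_n=0$ survive the summation over the central node, and each equals $\prod_j \widehat f_j(g_j)$. Thus $P_{G,n}$ is the convex hull of the exponent vectors $x(g_1,\dots,g_n)$ of these monomials, and each is a genuine vertex because the linear functional $\ell(y):=\sum_j y_{g_j}^j$ uniquely attains its maximum $n$ there: any other defining point $x(g_1',\dots,g_n')$ gives $\ell=|\{j:g_j'=g_j\}|<n$.

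For the lattice equality, write $M$ for the set on the right-hand side. The inclusion $L_{G,n}\subseteq M$ is immediate: each vertex has row sums all equal to $1$ and character sum $\sum_j g_j = 0$, and both conditions are preserved under $\ZZ$-linear combinations (the common row sum of $\sum c_i v_i$ equals $\sum c_i$). For the converse, fix $v_0 := x(0,\dots,0)$ and, given $x\in M$ with common row sum $N$, form $y := x - Nv_0$; then $y$ lies in the row-sum-zero sublattice $R\subset\ZZ^{n|G|}$ and still has character sum zero. In the $\ZZ$-basis $b_g^j := e_g^j - e_0^j$ of $R$ (indexed by $j\in[n]$ and $g\ne 0$), the character map becomes $\phi(b_g^j)=g$ and vertex differences read $v(g_1,\dots,g_n)-v_0 = \sum_{j:\,g_j\ne 0} b_{g_j}^j$; it therefore suffices to show that these indicators generate $\ker\phi$.

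Using $n\ge 3$ (the relevant range for claw trees), I would extract two explicit families of vertex differences. First, \emph{transport elements} $b_g^j - b_g^{j'}$ for $g\ne 0$ and $j\ne j'$ are obtained as the difference of the indicators of the tuples $(g\text{ at }j,\,-g\text{ at }j'',\,0,\dots)$ and $(g\text{ at }j',\,-g\text{ at }j'',\,0,\dots)$ for any auxiliary third index $j''$ (the $2$-torsion case $g=-g$ works identically). Second, the \emph{combination elements} $b_g^1 + b_h^2 + b_{-g-h}^3$ (with obvious modifications when some entries vanish) appear directly as $v(g,h,-g-h,0,\dots,0) - v_0$. Transport shows that modulo the vertex-difference sublattice $L_{G,n}\cap R$, every $b_g^j$ is equivalent to $b_g^1$, and combination translates into $[b_g^1]+[b_h^1]+[b_{-g-h}^1]=0$; setting $\theta(g):=[b_g^1]$ and $\theta(0):=0$ therefore defines a surjective homomorphism $\theta\colon G\to R/(L_{G,n}\cap R)$.

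The main obstacle is to rule out an excess factor in this identification. Composing $\theta$ with the natural surjection $R/(L_{G,n}\cap R)\twoheadrightarrow R/\ker\phi\cong G$ induced by $L_{G,n}\cap R\subseteq\ker\phi$ sends $g\mapsto[b_g^1]\mapsto\phi(b_g^1)=g$, the identity on $G$; thus $\theta$ is injective and hence an isomorphism. This forces $[R:L_{G,n}\cap R]=|G|=[R:\ker\phi]$, so $L_{G,n}\cap R=\ker\phi$. Comparing the short exact sequences $0\to L_{G,n}\cap R\to L_{G,n}\to\ZZ\to 0$ and $0\to M\cap R\to M\to\ZZ\to 0$ (with right-hand map the common row sum) yields $L_{G,n}=M$. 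The hypothesis $n\ge 3$ is essential at the transport step: for $n=2$ the only valid tuples are pairs $(g,-g)$, and $L_{G,2}$ is strictly smaller than $M$.
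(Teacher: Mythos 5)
The paper does not actually prove this statement: it is quoted from the literature with a pointer to \cite{bw, mateusz, ss}, so there is no internal proof to compare against. Your argument is, as far as I can check, a correct self-contained proof for $n\ge 3$, and it is a sensible one. The vertex part is handled exactly as in the cited sources (Fourier diagonalization gives the monomial parameterization, and your separating functional $\ell(y)=\sum_j y_{g_j}^j$ correctly certifies that each generator is a vertex). The lattice part is where you add genuine content: reducing modulo the row-sum-zero sublattice $R$ with basis $b_g^j=e_g^j-e_0^j$, showing the vertex differences generate $\ker\phi$ via your transport elements $b_g^j-b_g^{j'}$ and combination elements $b_g^1+b_h^2+b_{-g-h}^3$, and then using the retraction $R/(L_{G,n}\cap R)\twoheadrightarrow R/\ker\phi\cong G$ to rule out a proper sublattice is clean and avoids any case analysis on $G$. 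Two small points. First, you should spell out that $\theta$ is additive: the three-term relation $\theta(g)+\theta(h)+\theta(-g-h)=0$ together with the degenerate case $\theta(g)+\theta(-g)=0$ (from the tuple $(g,0,-g,0,\dots,0)$) gives $\theta(g)+\theta(h)=\theta(g+h)$; as written this is only implicit. Second, your observation that the transport step needs $n\ge3$, and that for $n=2$ the lattice generated by the vertices is strictly smaller than $M$ (already visible for $G=\ZZ_2$, where the vertex lattice is $\ZZ(1,0,1,0)+\ZZ(0,1,0,1)$ but $(0,2,2,0)\in M$), is a correct and worthwhile caveat that the paper's bare statement does not flag; it is harmless for the paper's volume computations, where the $n=2$ polytopes are degenerate, but it does mean the theorem as literally stated requires $n\ge 3$.
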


The polytope $P_{G,n}$ has a lot of symmetries that can be described by group actions on $\RR^{|G|n}$. All of these actions are isomorphisms of $\RR^{|G|n}$ and preserve $P_{G,n}$:
 \begin{itemize}
\item action of $G^n$: For $h\in G^n$ and $x\in \mathbb{R}^{(|G|)n}$ we define $(hx)_i^j=x_{i+h_j}^j$, where the sum $i+h_j$ is in $G$. 

This action preserve polytope $P_{G,n}$ only if we act with $h\in H_n$, where $H_{n}=\{(h_1, h_2,\dots, h_n)\in G^n: h_1+h_2+\dots+h_n=0\}$.
    \item action of $\mathbb S_n$: For $\sigma\in \mathbb S_{n}$ and $x\in \mathbb{R}^{(|G|)n}$, we define $(\sigma x)_i^{\sigma(j)}=x_{i}^j$.
    \item action of $\Aut(G)$: For $\varphi\in \Aut(G)$ and $x\in \mathbb{R}^{(|G|)n}$ we define $(\varphi x)_{\varphi(i)}^j=x_{i}^j$.
\end{itemize}

Clearly, the polytope $P_{G, n}$ in $\RR^{|G|n}$ is not full-dimensional. For this reason, for the rest of the article, we will with the projection of $P_{G,n}$ on the $(|G|-1)n$ coordinates which correspond to non-zero elements of $G$. We still denote this projection by $P_{G,n}$. Note that the above-described group actions still give us isomorphisms of $R^{(|G|-1)n}$.

We denote by $[n]:=\{1,2, \dots, n\}$. Also, for a set $A$, we denote by $\mathcal{P}_{odd}(A)$ the set of all subsets of $A$ having odd cardinality.

\vspace{.2in}
\section{Phylogenetic degrees of \texorpdfstring{$X_{\ZZ_2,n}$}{Z2}}\label{1}
\vspace{.2in}

We start by giving an informal idea of our approach: One can obtain the polytope $P_{\ZZ_2,n}$ from the cube $\fC_n=[0,1]^n$ by cutting its corners. More specifically, we need to cut off all its vertices with an odd sum of coordinates. At every such vertex, we cut off the unit simplex with lattice volume one. Since we have $2^{n-1}$ such vertices, the polytope has volume $n!-2^{n-1}$. One can imagine it nicely in 3 dimensions: We can visualize the polytope $P_{\ZZ_2,3}$ as it is a simplex by cutting off four unit simplices from the cube, see Figure~\ref{polytope}.
\begin{figure}[h!]
  \includegraphics[width=\linewidth, scale=0.9, width=14cm]{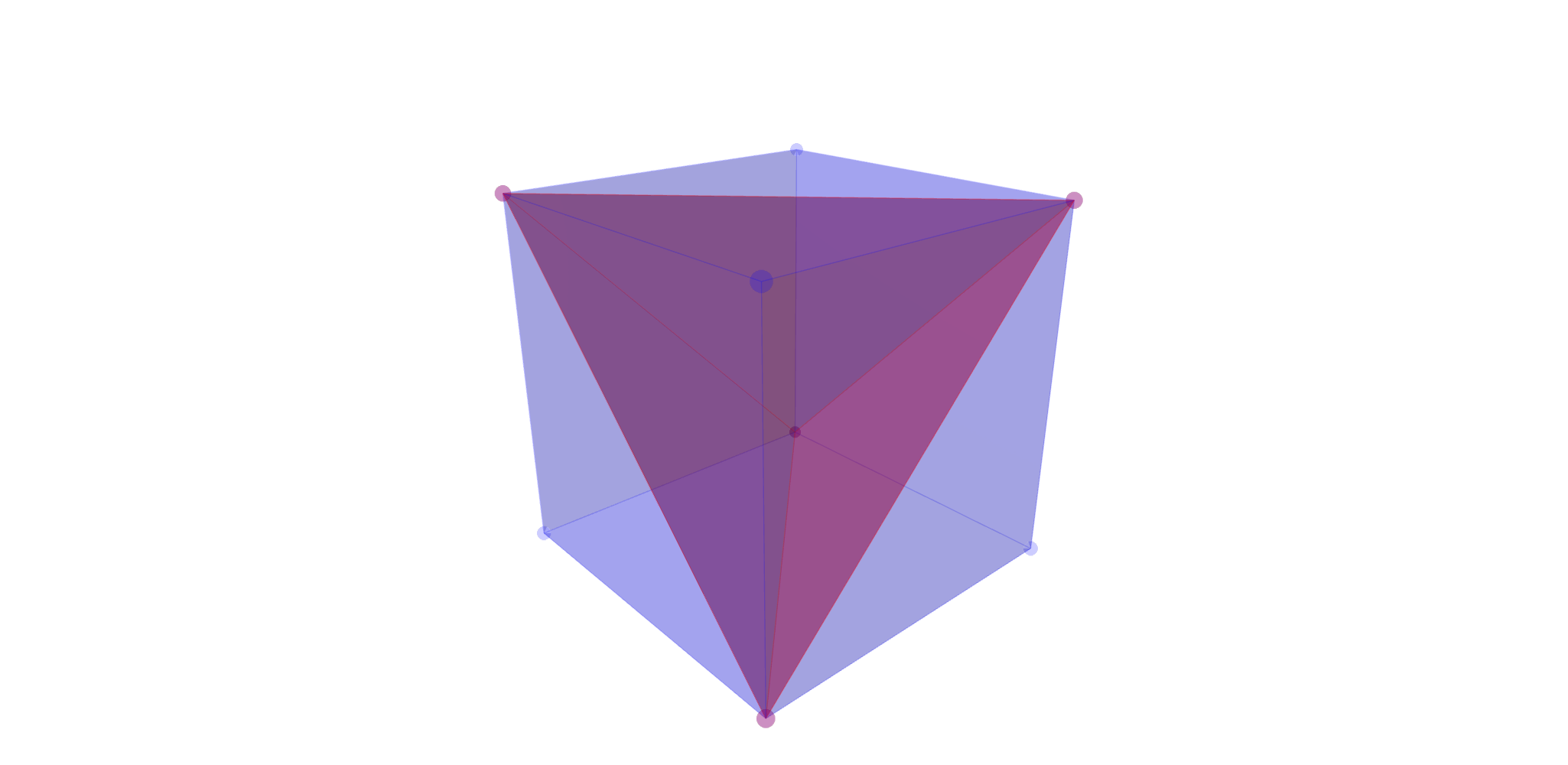}
  \caption{}
  \label{polytope}
\end{figure}

 Let us denote for $A\subset [n]$ $$H^+_A=\{x\in \RR^n: \sum_{i\not\in A} x_1^i - \sum_{i\in A} x_1^i\ge 1-|A|\},$$
$$H^-_A=\{x\in \RR^n: \sum_{i\not\in A} x_1^i - \sum_{i\in A} x_1^i\le 1-|A|\},$$
$$H_A=\{x\in \RR^n: \sum_{i\not\in A} x_1^i - \sum_{i\in A} x_1^i= 1-|A|\}.$$

\begin{Theorem}(\cite{bw, h-rep})\label{facetdescription_Z2}
The facet description of the polytope $P_{\ZZ_2,n}$ is:
\begin{itemize}
\item $x_1^i\geq 0$ for all $1\leq j\leq n$,
\item $x_{1}^j\le 1$ for all $1\leq j\leq n$,
\item For all $A\in\mathcal{P}_{odd}([n])$:
\[
\sum_{i\not\in A} x_1^i - \sum_{i\in A} x_1^i\ge 1-|A|.\]

\end{itemize}
\end{Theorem}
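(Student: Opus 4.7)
Let $Q$ denote the polytope cut out by the inequalities in the statement. The vertex description recalled earlier identifies the vertices of $P_{\ZZ_2,n}$ with the $0/1$-vectors $\chi_B\in\{0,1\}^n$ for which $|B|$ is even; my plan is to establish $P_{\ZZ_2,n}=Q$ by proving both inclusions, where the easy direction is just a check and the reverse direction hinges on a clean convexity observation.

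For $P_{\ZZ_2,n}\subseteq Q$, I verify each listed inequality at every vertex $\chi_B$. The box inequalities are immediate, and for an odd subset $A$ the quantity
$$\sum_{i\not\in A}(\chi_B)_i-\sum_{i\in A}(\chi_B)_i=|B\setminus A|-|A\cap B|$$
has the same parity as $|B|$ and is therefore even, while $1-|A|$ is even because $|A|$ is odd. Combined with the crude upper bound $|A\cap B|-|B\setminus A|\le|A\cap B|\le|A|$, this parity obstruction forces $|A\cap B|-|B\setminus A|\le|A|-1$, which is the $A$-inequality.

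For $Q\subseteq P_{\ZZ_2,n}$ it suffices to show that the vertices of $Q$ are exactly the even-weight corners of $[0,1]^n$. The key observation is that the hyperplane $H_A$ for odd $A$ meets $[0,1]^n$ in an $(n-1)$-simplex whose vertices are precisely the $n$ cube-neighbors $\chi_{A\triangle\{j\}}$, $j\in[n]$, of $\chi_A$; each of these is even-weight, hence lies in $Q$ by the first inclusion, so by convexity of $Q$ the entire simplex $H_A\cap[0,1]^n$ is contained in $Q$, which gives $Q\cap H_A=H_A\cap[0,1]^n$. Now let $v$ be any vertex of $Q$: if no $H_A$ is tight at $v$, then $v\in\{0,1\}^n$ and $v$ must be even-weight (else its support $A$ would violate the $A$-inequality); and if some $H_A$ is tight at $v$, then $v$ is a vertex of the face $Q\cap H_A$ and therefore of the form $\chi_{A\triangle\{j\}}$, again even-weight. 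Either way $v$ is an even-weight cube corner, so $Q=\conv\{\chi_B:|B|\text{ even}\}=P_{\ZZ_2,n}$. The main subtlety is precisely the identity $Q\cap H_A=H_A\cap[0,1]^n$: without it one would need to track how intersections of several $H_A$'s could potentially produce vertices of $Q$ strictly inside the cube, whereas the convexity argument above converts this delicate combinatorics into a one-line argument.
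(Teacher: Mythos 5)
The paper offers no proof of this statement---it is quoted from \cite{bw, h-rep}---so there is nothing in-paper to compare against; your argument stands on its own, and it is correct. The inclusion $P_{\ZZ_2,n}\subseteq Q$ is fine: at a vertex $\chi_B$ with $|B|$ even the quantity $|B\setminus A|-|A\cap B|$ is even and is at least $-|A|$, which is odd, so it is at least $1-|A|$. The reverse inclusion is also sound. The substitution $y_i=x_i$ for $i\notin A$ and $y_i=1-x_i$ for $i\in A$ maps the cube to itself and turns $H_A$ into $\{\sum_i y_i=1\}$, confirming that $H_A\cap[0,1]^n$ is the simplex on the $n$ even-weight points $\chi_{A\triangle\{j\}}$; convexity of $Q$ then gives $Q\cap H_A=[0,1]^n\cap H_A$, and your dichotomy on a vertex $v$ of $Q$ (either all $n$ linearly independent tight constraints are box constraints, forcing $v\in\{0,1\}^n$ with even weight since an odd-weight corner violates the inequality indexed by its own support, or $v$ lies on some face $Q\cap H_A$ and is therefore a vertex of that simplex) correctly shows that every vertex of $Q$ is an even-weight cube corner, whence $Q=\conv\{\chi_B:|B|\ \text{even}\}=P_{\ZZ_2,n}$. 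One small caveat: what you actually establish is that the listed inequalities define $P_{\ZZ_2,n}$ as a set, which is precisely what is used later in Theorem~\ref{Z2-volume}; the list is not literally an irredundant facet description for every $n$ (for $n=3$ the polytope is a tetrahedron whose only facets are the four hyperplanes $H_A$, and all twelve box inequalities are redundant). That discrepancy lies in the statement as quoted, not in your argument.
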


We will need the following results:

\begin{Lemma}\label{Z2-simplex} For all $A\subset [n]$, $\fC_n\cap H^-_A$ is a lattice polytope. Moreover, it is a unit simplex, thus its (lattice) volume is one.
\end{Lemma}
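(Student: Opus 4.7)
The plan is to reduce the claim to the statement that the standard unit simplex in $\RR^n$ has lattice volume one, via a lattice-preserving affine change of coordinates that ``folds'' the halfspace $H^-_A$ onto the standard one.

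First I would introduce the coordinates $y_i=x_1^i$ for $i\notin A$ and $y_i=1-x_1^i$ for $i\in A$. The map $x\mapsto y$ is an affine isomorphism of $\RR^n$ that is a composition of the reflections $x_1^i \mapsto 1-x_1^i$ (for each $i\in A$) with a translation by an integer vector, so it is unimodular and preserves the integer lattice $\ZZ^n$. Moreover, since the involution $t\mapsto 1-t$ maps $[0,1]$ to itself, the cube $\fC_n=[0,1]^n$ is fixed setwise by this change of coordinates.

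Next I would rewrite the defining inequality of $H^-_A$ in the new coordinates. Writing $|A|=k$, the inequality
\[
\sum_{i\notin A} x_1^i - \sum_{i\in A} x_1^i \le 1-k
\]
is equivalent, after adding $k$ to both sides, to
\[
\sum_{i\notin A} x_1^i + \sum_{i\in A}\bigl(1-x_1^i\bigr) \le 1,
\]
which is exactly $\sum_{i=1}^n y_i \le 1$. Hence the image of $\fC_n\cap H^-_A$ under $x\mapsto y$ is the standard simplex
\[
\Delta_n = \Bigl\{ y\in [0,1]^n : \sum_{i=1}^n y_i \le 1\Bigr\} = \conv(0,e_1,\dots,e_n),
\]
which is a lattice simplex with $n+1$ vertices and lattice volume one.

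The key step is the verification that the affine map $x\mapsto y$ is lattice-preserving, which follows immediately from its construction as a composition of coordinate-wise integer reflections. The only mild obstacle is making sure that no additional facets of $\fC_n\cap H^-_A$ are introduced on the boundary of the cube: but since $\Delta_n$ is already cut out by $\sum y_i\le 1$ together with $y_i\ge 0$ (the inequalities $y_i\le 1$ being redundant), the vertices of the image lie in $\{0,1\}^n$ and transport back to lattice points of $\fC_n$. This yields the conclusion that $\fC_n\cap H^-_A$ is a unit simplex of lattice volume one.
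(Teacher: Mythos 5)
Your proof is correct and is essentially the paper's own argument: the paper acts with the group element $g^A\in\ZZ_2^n$ supported on $A$, which in the projected coordinates is exactly your unimodular affine map $x_1^i\mapsto 1-x_1^i$ for $i\in A$, reducing to the case $A=\emptyset$ where the polytope is the standard unit simplex. You merely spell out explicitly the lattice-preservation and the rewriting of the inequality, which the paper leaves implicit.
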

\begin{proof} Consider $g^A\in \ZZ_2^n$ such that $g^A_i=1$ for $i\in A$ and $g^A_i=0$ for $i\not\in A$. If we act with $g^A$, then $g^A(\fC_n\cap H^-_A)=\fC_n\cap H^-_\emptyset$. Thus $g_A(\fC_n\cap H^-_A)$ is a polytope defined by inequalities $0\le x_1^i\le 1$ and $x_1+\dots +x_n\le 1$ which is a unit simplex. This implies that also $\fC_n\cap H^-_A$ is a unit simplex which proves the lemma.
\end{proof}

\begin{Lemma}\label{Z2-emptyintersection}
For all different sets $A,\ B\subset [n]$ with $|A|+|B|$ being even number, the polytope $\fC_n\cap H_A^-\cap H_B^-$ is not full-dimensional, thus its (lattice) volume is zero. 
\end{Lemma}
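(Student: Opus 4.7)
The plan is to exploit the $\ZZ_2^n$-symmetry of the cube together with the family of half-spaces $H_A^-$. Specifically, I would use the involution $g^A$ from the proof of Lemma~\ref{Z2-simplex} to transport $H_A^-$ to the standard position $H_\emptyset^-$. In the projected coordinates $y_i = x_1^i$, this action is the affine map $y_i \mapsto 1-y_i$ for $i \in A$ and $y_i \mapsto y_i$ for $i \notin A$; it is a lattice-preserving involution of $\fC_n$, so it suffices to understand the image of $H_B^-$ under it.

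The first step is a direct substitution. Writing out $g^A(H_B^-)$ and splitting the sum over $i \in A \cap B$, $i \in A \setminus B$, $i \in B \setminus A$, and $i \notin A \cup B$, the $\pm 1$ coefficients of the $y_i$ reorganize into a sum with the correct signs for the set $A \triangle B$, and the constant $|A \setminus B| - |A \cap B|$ combines with $1 - |B|$ to produce $1 - |A \triangle B|$. The outcome I expect is the clean identity $g^A(H_B^-) = H_{A \triangle B}^-$. Thus the problem reduces to showing that $\fC_n \cap H_\emptyset^- \cap H_C^-$ is not full-dimensional for $C := A \triangle B$, which by the hypotheses is nonempty and of even cardinality, so $|C| \ge 2$.

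The core estimate is then a one-line linear-algebraic argument. Adding the two defining inequalities
\[
\sum_i y_i \le 1 \quad \text{and} \quad \sum_{i \notin C} y_i - \sum_{i \in C} y_i \le 1 - |C|
\]
yields $2\sum_{i \notin C} y_i \le 2 - |C|$. If $|C| \ge 4$, the right-hand side is strictly negative while the left-hand side is nonnegative (since $y_i \ge 0$), so the intersection is empty. If $|C| = 2$, equality forces $y_i = 0$ for all $i \notin C$, and a symmetric manipulation (subtracting instead of adding) gives $\sum_{i \in C} y_i = 1$, so the intersection is confined to a proper affine subspace of $\RR^n$ and is not full-dimensional.

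The only step requiring real care is the bookkeeping for the $g^A$-action in the projected coordinates, since the involution acts affinely rather than linearly and one must be attentive to how the constants $1-|A|$ and $1-|B|$ transform. Once that is pinned down, the inequality argument above finishes the proof and the vanishing of the lattice volume is immediate.
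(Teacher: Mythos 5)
Your proof is correct and is essentially the paper's argument: both hinge on adding the two defining inequalities of $H_A^-$ and $H_B^-$ and playing the result against the cube constraints $0\le y_i\le 1$ to force either emptiness (when $|A\triangle B|>2$) or confinement to a hyperplane (when $|A\triangle B|=2$). The only difference is that you first normalize to $A=\emptyset$ via the involution $g^A$ (and your identity $g^A(H_B^-)=H_{A\triangle B}^-$ does check out), whereas the paper performs the same summation directly for general $A$ and $B$; this is a cosmetic rather than substantive difference.
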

\begin{proof} Consider $x\in \fC_n\cap H_A^-\cap H_B^-$. By summing up the inequalities for $H_A^-$ and $H_B^-$ and using $0\le x_1^i\le 1$ we obtain:

$$2-|A|-|B|\ge 2\sum_{i\not\in A\cup B} x_1^i -2\sum_{i\in A\cap B}x_1^i \ge -2|A\cap B| \Leftrightarrow$$
$$\Leftrightarrow 2\le |A|+|B|-2|A\cap B|=|A\triangle B|,$$

where $A\triangle B$ is the symmetric difference of $A$ and $B$. Since $A$ and $B$ are different sets and $|A|+|B|$ is even we have $|A\triangle B|\ge 2$. Therefore there must be equality in all inequalities above. In particular, there is an equality in the inequality for $H_A^-$, thus $x_1^i\in H_A$. That means the whole polytope $\fC_n\cap H_A^-\cap H_B^-$ lies in the hyperplane $H_A$ and is not full-dimensional.  
\end{proof}
\begin{Remark}
The set $\fC_n\cap H_A^-\cap H_B^-$ must not be a lattice polytope. Since it is an intersection of halfspaces it must be either a polytope (with not necessarily integral vertices) or an empty set. In fact, one can prove that it is always a lattice polytope or an empty set, but for the purpose of this article, it is sufficient to know that its volume is zero.
\end{Remark}

Now we prove the theorem.

\begin{Theorem}\label{Z2-volume}
For all $n\ge 2$ the lattice volume of the polytope $P_{\ZZ_2,n}$ in the lattice $L_{\ZZ_2,n}$ is $n!/2-2^{n-2}$.
\end{Theorem}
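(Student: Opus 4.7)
The plan is to start from the cube $\fC_n=[0,1]^n$ and subtract the volumes of the simplicial corners cut off by the facet inequalities of Theorem~\ref{facetdescription_Z2}. That description gives
$$P_{\ZZ_2,n}=\fC_n\cap\Sect_{A\in\mathcal{P}_{odd}([n])}H^+_A,$$
so, up to a set of measure zero,
$$\fC_n\setminus P_{\ZZ_2,n}=\Union_{A\in\mathcal{P}_{odd}([n])}\bigl(\fC_n\cap H^-_A\bigr),$$
and it will suffice to compute the volume of this union and subtract it from $\vol(\fC_n)$.

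The next step is the principle of inclusion-exclusion. Any two distinct $A,B\in\mathcal{P}_{odd}([n])$ satisfy $|A|+|B|\equiv 0\pmod 2$, so Lemma~\ref{Z2-emptyintersection} forces $\vol(\fC_n\cap H^-_A\cap H^-_B)=0$; every higher intersection is contained in such a pair and hence also vanishes. Thus the inclusion-exclusion sum collapses to
$$\vol\bigl(\fC_n\setminus P_{\ZZ_2,n}\bigr)=\sum_{A\in\mathcal{P}_{odd}([n])}\vol\bigl(\fC_n\cap H^-_A\bigr),$$
and by Lemma~\ref{Z2-simplex} each of the $|\mathcal{P}_{odd}([n])|=2^{n-1}$ summands on the right equals $1$ in the $\ZZ^n$-normalized lattice volume. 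Together with $\vol_{\ZZ^n}(\fC_n)=n!$ this yields $\vol_{\ZZ^n}(P_{\ZZ_2,n})=n!-2^{n-1}$.

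Finally I would rescale to the lattice $L_{\ZZ_2,n}$. After the projection onto the $n$ coordinates $x_1^j$, the defining conditions of $L_{\ZZ_2,n}$ reduce to $\sum_j x_1^j\equiv 0\pmod 2$, so the projection of $L_{\ZZ_2,n}$ is the index-$2$ sublattice of $\ZZ^n$ consisting of integer vectors with even coordinate sum. Normalized volumes therefore satisfy $\vol_{L_{\ZZ_2,n}}=\tfrac{1}{2}\vol_{\ZZ^n}$, which gives
$$\vol_{L_{\ZZ_2,n}}(P_{\ZZ_2,n})=\tfrac{1}{2}\bigl(n!-2^{n-1}\bigr)=\frac{n!}{2}-2^{n-2},$$
as required. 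The combinatorial core of the argument---the collapse of inclusion-exclusion to singletons, secured by Lemma~\ref{Z2-emptyintersection}---comes essentially for free from the two preceding lemmas; the only real subtlety I foresee is being careful about this final lattice rescaling so that the factor of $\tfrac12$ appears exactly once.
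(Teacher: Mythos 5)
Your proposal is correct and follows essentially the same route as the paper: express $P_{\ZZ_2,n}$ as $\fC_n$ minus the corner pieces $\fC_n\cap H^-_A$, use Lemma~\ref{Z2-emptyintersection} to collapse inclusion-exclusion to the $2^{n-1}$ singleton terms each of volume $1$ by Lemma~\ref{Z2-simplex}, and divide by $2$ for the index of $L_{\ZZ_2,n}$ in $\ZZ^n$. Your explicit justification of that index via the parity of the coordinate sum is a small addition the paper leaves implicit, but the argument is the same.
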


\begin{proof}
From the facet description of $P_{Z_2,n}$, Theorem~\ref{facetdescription_Z2}, we know that $$P_{\ZZ_2,n}=\fC_n\cap \bigcap_{A\in \mathcal{P}_{odd}([n])} H^+_A.$$

Thus, $$P_{\ZZ_2,n}= \fC_n \setminus \bigcup_{A\in\mathcal{P}_{odd}([n])} (\fC_n\cap H^-_A).$$

By Lemma \ref{Z2-emptyintersection} we know that the volume of $(\fC_n\cap H^-_A)\cap (\fC_n\cap H^-_B)$ for different $A,B$ is 0. Thus
$$V_{\ZZ^n}(P_{\ZZ_2,n})=V_{\ZZ^n}(\fC_n)-\sum_{A\in\mathcal{P}_{odd}([n])} V_{\ZZ^n} (\fC_n\cap H^-_A)=n!- 2^{n-1}\cdot 1. $$

We are interested in the lattice volume in the lattice $L_{\ZZ_2,n}$ which is a sublattice of $\ZZ^n$ of index 2. Therefore $$V_{L_{\ZZ_2,n}}(P_{\ZZ_2,n})=V_{\ZZ^n}(P_{\ZZ_2,n})/2=n!/2-2^{n-2}.$$ 

\end{proof}

\begin{Corollary}\label{phylodegZ2}
The phylogenetic degree of the projective algebraic variety $X_{\ZZ_2,n}$ is $n!/2-2^{n-2}$.
\end{Corollary}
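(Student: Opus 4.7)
The plan is to derive the corollary as an immediate consequence of Theorem~\ref{Z2-volume}, since the only remaining ingredient is the standard dictionary between degrees of projective toric varieties and volumes of their associated lattice polytopes.

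First I would recall the general principle: for a projectively normal toric variety $X_P$ arising from a lattice polytope $P$ in a lattice $L$, the degree of $X_P$ (as a subvariety of the projective space determined by the lattice points of $P$) equals the normalized lattice volume $V_L(P)$, i.e.\ $\dim(P)!$ times the Euclidean volume measured with respect to $L$. This is exactly the content of \cite[Section~5.3]{fulton} cited in the introduction. Since the varieties $X_{G,T}$ coming from group-based models with $G \in \{\ZZ_2, \zz, \ZZ_3\}$ are known to be normal (and even projectively normal in the embedding given by the lattice points of $P_{G,T}$, see \cite{martinko,RM}), this dictionary applies directly to $X_{\ZZ_2,n}$ and its defining polytope $P_{\ZZ_2,n}$ in the lattice $L_{\ZZ_2,n}$.

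Then I would simply invoke Theorem~\ref{Z2-volume}, which computes
\[
V_{L_{\ZZ_2,n}}(P_{\ZZ_2,n}) = \frac{n!}{2} - 2^{n-2},
\]
and conclude that the phylogenetic degree of $X_{\ZZ_2,n}$ is $n!/2 - 2^{n-2}$.

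There is no real obstacle here: all the combinatorial work was already done in Lemmas~\ref{Z2-simplex} and~\ref{Z2-emptyintersection} and Theorem~\ref{Z2-volume}. The only subtle point worth emphasizing is the factor of $2$ coming from the index $[\ZZ^n : L_{\ZZ_2,n}] = 2$, which has already been accounted for in the statement of Theorem~\ref{Z2-volume}; thus no further adjustment is needed when passing from the volume to the degree.
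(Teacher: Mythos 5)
Your proposal is correct and matches the paper's (implicit) argument exactly: the paper states the corollary without proof, relying on precisely the dictionary from \cite[Section~5.3]{fulton} between degrees of projective toric varieties and normalized lattice volumes, applied to the volume computed in Theorem~\ref{Z2-volume}. Your remark that the index-$2$ correction is already built into that theorem is the right point to flag, and nothing further is needed.
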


\vspace{.2in}
\section{Phylogenetic degrees of \texorpdfstring{$X_{\ZZ_2\times \ZZ_2, n}$}{Z2xZ2}}\label{2}
\vspace{.2in}

In this section, we will compute the volume of polytopes $P_{\zz,n}$. Our approach is very similar to the case of the group $\ZZ_2$. We start with the product of simplices $C_{3,n}:=(\Delta_3)^n$, where $\Delta_3=\{x\in\RR^3: x_1,x_2,x_3\ge 0,x_1+x_2+x_3\le 1\}$ is the 3-dimensional unit simplex.
Then we cut off the parts which are not in the polytope $P_{\zz,n}$. However, now it would be more complicated because the parts which we cut off do intersect, so we will need to use the well-known principle of inclusion and exclusion and also compute volumes of the intersection of the parts which we are cutting off.
We will need the inequalities which describe the polytopes $P_{\zz,n}$. Let us denote elements of $\zz=\{0, \alpha, \beta, \gamma\}$. Let us denote for any set $A\subset[n]$:
$$S_{A,\alpha}(x)=\sum_{i\not\in A} (x_\beta^i+x_\gamma^i) - \sum_{i\in A} (x_\beta^i+x_\gamma^i),$$
$$S_{A,\beta}(x)=\sum_{i\not\in A} (x_\alpha^i+x_\gamma^i) - \sum_{i\in A} (x_\alpha^i+x_\gamma^i),$$
$$S_{A,\gamma}(x)=\sum_{i\not\in A} (x_\alpha^i+x_\beta^i) - \sum_{i\in A} (x_\alpha^i+x_\beta^i).$$

\begin{Theorem}(\cite{h-rep})\label{facetdescription_zz}
The facet description of the polytope $P_{\zz,n}$ is:
\begin{itemize}
\item $x_g^j\geq 0$ for all $g\in \{0, \alpha, \beta, \gamma\}$, $1\leq j\leq n$,
\item $x_{\alpha}^j+x_{\beta}^j+x_{\gamma}^j\le 1$ for all $1\leq j\leq n$,
\item For all $A\in\mathcal{P}_{odd}([n])$
\[
S_{A,\alpha}\geq 1-|A|,\ S_{A,\beta}\geq 1-|A|,\ S_{A,\gamma}\geq 1-|A|.\]

\end{itemize}
\end{Theorem}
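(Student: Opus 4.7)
The plan is to show the listed inequalities define $P_{\zz,n}$ in two stages: first, verify every vertex satisfies them, and second, verify the list is sufficient.

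The first stage is a direct computation on the vertices $v(g_1,\dots,g_n)$ with $g_1+\cdots+g_n=0$. Non-negativity and the simplex inequality hold because at every vertex the coordinates take values in $\{0,1\}$ with at most one $1$ per block $(x_\alpha^j,x_\beta^j,x_\gamma^j)$. By the action of $\Aut(\zz)\cong \mathbb{S}_3$ permuting $\alpha,\beta,\gamma$, it suffices to verify one family of odd-set inequalities, say $S_{A,\alpha}(v)\ge 1-|A|$. Writing $\zz=\ZZ_2\oplus\ZZ_2$ with $\alpha=(1,0)$, $\beta=(0,1)$, $\gamma=(1,1)$, the quantity $x_\beta^i+x_\gamma^i$ evaluated at $v(g_1,\dots,g_n)$ equals the second $\ZZ_2$-component $(g_i)_2\in\{0,1\}$, so
\[
S_{A,\alpha}(v)=\sum_{i=1}^n(g_i)_2-2\sum_{i\in A}(g_i)_2.
\]
Since $\sum g_i=0$ forces $\sum_i(g_i)_2$ to be even, $S_{A,\alpha}(v)$ is even. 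As $|A|$ is odd, $1-|A|$ is also even, while the crude bound $S_{A,\alpha}(v)\ge -|A|$ yields $S_{A,\alpha}(v)-(1-|A|)\ge -1$; being an even integer, this difference is therefore $\ge 0$, which is the desired inequality.

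For the second stage, I plan to argue by induction on $n$. The base case $n=2$ is a direct check, since $P_{\zz,2}$ is a $3$-simplex with the four vertices $v(g,g)$ for $g\in\zz$. For the inductive step, I would slice $P_{\zz,n}$ by fixing the block $(x_\alpha^n,x_\beta^n,x_\gamma^n)$: each such slice, after rescaling by $1-(x_\alpha^n+x_\beta^n+x_\gamma^n)$ and applying a suitable element of the $G^{n-1}$-symmetry to absorb the shift induced on the remaining coordinates, is identified with a copy of $P_{\zz,n-1}$ whose facet description is known by induction. Sweeping the slices over the $3$-simplex of possible last blocks recovers $P_{\zz,n}$ together with precisely the listed inequalities.

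The main obstacle is exactly this inductive step: unlike in the $\ZZ_2$ case (Theorem~\ref{facetdescription_Z2}) where cutting distinct corners of the cube produced non-interacting facets, here the three families $S_{A,\alpha}$, $S_{A,\beta}$, $S_{A,\gamma}$ can be simultaneously tight, so the slicing argument must verify that no extra facets arise along the intersections of these hyperplanes and that the induced correspondence of odd subsets of $[n]$ with odd subsets of $[n-1]$ is consistent. A clean alternative is to invoke \cite{h-rep} directly, where the $H$-representation of $P_{G,n}$ for arbitrary finite abelian $G$ is obtained through a systematic analysis of the normal fan under the full symmetry group.
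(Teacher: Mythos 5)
The paper does not prove this statement at all: it is quoted verbatim from \cite{h-rep} (Mauhar--Rusinko--Vernon), and the citation in the theorem header is the entire ``proof.'' So the only fair comparison is between your argument and a complete proof of the $H$-representation, and there your proposal has a genuine gap. Your first stage (necessity) is correct and nicely done --- the parity argument that $S_{A,\alpha}(v)$ and $1-|A|$ are both even while $S_{A,\alpha}(v)-(1-|A|)\ge -1$ is exactly the right observation. But the second stage, which is the substance of the theorem, is not carried out, and the inductive mechanism you propose does not work as described. The projection of $P_{\zz,n}$ onto the last block $(x_\alpha^n,x_\beta^n,x_\gamma^n)$ has image $\Delta_3$, and the fibers over the four \emph{vertices} of $\Delta_3$ are indeed translates of $P_{\zz,n-1}$ under the $H_n$-action; however, the fiber over an interior point of $\Delta_3$ is a weighted Minkowski sum of those four translates (a Cayley-type fiber), not a rescaled and shifted single copy of $P_{\zz,n-1}$. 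So ``sweeping the slices'' does not reduce to the $(n-1)$-case, and one would still have to control which supporting hyperplanes of the Minkowski-sum fibers assemble into facets of the total space --- which is precisely the difficulty you flag as ``the main obstacle'' and then leave unresolved. As it stands you have proved only the containment of $P_{\zz,n}$ in the polytope cut out by the listed inequalities, not equality, and not that each inequality is facet-defining.

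Your ``clean alternative'' of invoking \cite{h-rep} directly is in fact exactly what the paper does, and for the purposes of this paper that is the intended resolution. If you want a self-contained proof, the argument in \cite{h-rep} does not proceed by slicing; it works with the group-based structure (Fourier coordinates and the $\ZZ_2\times\ZZ_2$-grading) to verify that every point of the candidate polytope is a convex combination of vertices, which is a different and more global argument than the induction you sketch. I would either complete the sufficiency step along those lines or simply cite the reference, but the proposal as written should not be presented as a proof.
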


Similarly, as in the $\ZZ_2$ case, let us denote for $A\subset [n]$ and $g\in\{\alpha,\beta,\gamma\}$: 
$$H^+_{A,g}=\{x\in \RR^{3n}: S_{A,g}(x)\ge 1-|A|\},$$
$$H^-_{A,g}=\{x\in \RR^{3n}: S_{A,g}(x)\le 1-|A|\},$$
$$H_{A,g}=\{x\in \RR^{3n}: S_{A,g}(x)= 1-|A|\}.$$

Notice that $$P_{\zz,n}=\fC_{3,n}\cap \left(\bigcap_{g\in\{\alpha,\beta,\gamma\}}\bigcap_{A\in \mathcal P_{\text{odd}}([n])} (\fC_{3,n}\cap H_{A,g}^+) \right).$$

\begin{Lemma}\label{z22-emptyintersection}
For all $g\in\{\alpha,\beta,\gamma\}$ and all different sets $A,\ B\subset [n]$ with $|A|+|B|$ being even number, the polytope $C_{3,n}\cap H_{A,g}^-\cap H_{B,g}^-$ is not full-dimensional, thus its (lattice) volume is zero.
\end{Lemma}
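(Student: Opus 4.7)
The plan is to adapt the proof of Lemma~\ref{Z2-emptyintersection} by collapsing the $\zz$-situation to the $\ZZ_2$-situation through a linear change of variables. By the symmetry among the three non-identity elements $\alpha,\beta,\gamma$, it suffices to treat $g=\alpha$. For each $i\in[n]$, I would set $y^i := x_\beta^i + x_\gamma^i$. From $x\in \fC_{3,n}$ one has $x_\beta^i,x_\gamma^i\ge 0$ together with $x_\alpha^i+x_\beta^i+x_\gamma^i\le 1$ and $x_\alpha^i\ge 0$, so that $0\le y^i\le 1$. Moreover, $S_{A,\alpha}(x) = \sum_{i\notin A} y^i - \sum_{i\in A} y^i$, so the two defining inequalities of $H_{A,\alpha}^-$ and $H_{B,\alpha}^-$ become precisely the pair of inequalities appearing in Lemma~\ref{Z2-emptyintersection} applied to the point $(y^1,\dots,y^n)\in[0,1]^n$.

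With this reduction in place, I would then mimic the calculation of Lemma~\ref{Z2-emptyintersection}. Adding the two inequalities gives
\[
2 - |A| - |B| \;\ge\; S_{A,\alpha}(x) + S_{B,\alpha}(x) \;=\; 2\sum_{i\notin A\cup B} y^i - 2\sum_{i\in A\cap B} y^i \;\ge\; -2|A\cap B|,
\]
where the last step uses $y^i\ge 0$ for $i\notin A\cup B$ and $y^i\le 1$ for $i\in A\cap B$. Rearranging yields $|A\triangle B|\le 2$, and since $|A|+|B|$ is even and $A\ne B$, the quantity $|A\triangle B|$ is a positive even integer. These two constraints force $|A\triangle B| = 2$, so equality must hold throughout the chain above.

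In particular $S_{A,\alpha}(x) + S_{B,\alpha}(x) = 2 - |A| - |B|$, which, combined with the individual upper bounds $S_{A,\alpha}(x)\le 1-|A|$ and $S_{B,\alpha}(x)\le 1-|B|$, forces $S_{A,\alpha}(x) = 1-|A|$. Hence $\fC_{3,n}\cap H_{A,\alpha}^-\cap H_{B,\alpha}^-$ is contained in the hyperplane $H_{A,\alpha}\subset\RR^{3n}$ and is therefore not full-dimensional, so its (lattice) volume is zero. I do not foresee any real obstacle here; the only novelty over the $\ZZ_2$ case is the substitution, and once the bounds $0\le y^i\le 1$ are verified the rest of the argument is structurally identical to Lemma~\ref{Z2-emptyintersection}.
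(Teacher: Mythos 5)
Your proposal is correct and is essentially the paper's own argument: the substitution $y^i=x_\beta^i+x_\gamma^i$ is exactly the projection $\pi_n:\RR^{3n}\to\RR^n$ the paper uses to reduce to Lemma~\ref{Z2-emptyintersection}, and the inequality chain you then rerun is verbatim the one in that lemma's proof (the paper simply cites it rather than repeating the computation). The verification that $0\le y^i\le 1$ on $\fC_{3,n}$ and the final step forcing $S_{A,\alpha}(x)=1-|A|$ are both sound.
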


\begin{proof}
Without loss of generality, we may consider the case $g=\alpha$. Consider the surjective linear map $\pi_n:\RR^{3n}\rightarrow \RR^n$ defined by $(\pi_n)(x)_i=x_\beta^i+x_\gamma^i$. Notice that $\pi_n(C_{3,n})=\fC_n$ and $\pi_n(H_{A,\alpha}^-)=H_A^-$. Therefore $\pi_n(C_{3,n}\cap H_{A,\alpha}^-\cap H_{B,\alpha}^-)=\pi_n(\fC_n\cap H_{A}^-\cap H_B^-)$ which is not full-dimensional by Lemma~\ref{Z2-emptyintersection}. It follows that also $\fC_{3,n}\cap H_{A,\alpha}^-\cap H_{B,\alpha}^-$ is not full-dimensional and has volume 0.
\end{proof}

\begin{Lemma}\label{z22-integers}
Let $A\subset[n]$ and let $c_g^i, d_g^i,c\in \ZZ$ for $i\in[n], g\in \zz$ and $h\in\{\alpha,\beta,\gamma\}.$ Let $P\subset \RR^{3n}$ be the polytope defined by the inequalities:
\begin{itemize}
\item $c_g^i\le x_g^i\le d_g^i$, for $i\in[n], g\in {\alpha,\beta,\gamma} $ 
\item $c_0^i-1\ge x_\alpha^i+x_\beta^i+x_\gamma^i\ge d_0^i-1$, for $i\in [n]$
\item $S_{A,h}\ge c$. 
\end{itemize}
Then $P$ is a lattice polytope, i.e. all of its vertices are in $\ZZ^{3n}$.
\end{Lemma}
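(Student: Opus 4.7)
The plan is to pick an arbitrary vertex $v$ of $P$ and show $v\in\ZZ^{3n}$ by exploiting the following structural fact: every defining inequality of $P$ except the single ``global'' inequality $S_{A,h}\ge c$ is \emph{local}, in the sense that it involves only the three variables $x_\alpha^i,x_\beta^i,x_\gamma^i$ for a fixed index $i\in[n]$ and has integer right-hand side. Assume without loss of generality that $h=\alpha$, so that $S_{A,\alpha}$ involves the sums $x_\beta^i+x_\gamma^i$ but not the variables $x_\alpha^i$.

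Let $r_i\in\{0,1,2,3\}$ be the rank of the local active constraints at index $i$ and let $\epsilon\in\{0,1\}$ indicate whether the active equation $S_{A,\alpha}=c$ is linearly independent of the local ones. Since $v$ is a vertex, $\sum_i r_i+\epsilon=3n$, and $r_i\le 3$ leaves exactly two cases: (1) $\epsilon=0$ and $r_i=3$ for every $i$, or (2) $\epsilon=1$ and there is a unique $i^*$ with $r_{i^*}=2$ while $r_i=3$ for $i\ne i^*$. In Case~(1), a direct inspection shows that achieving $r_i=3$ at index $i$ is only possible via ``three box coordinates fixed'' or ``two box coordinates fixed plus one sum equation''; both configurations produce integer triples because all right-hand sides are integers.

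In Case~(2), applying the same inspection at every $i\ne i^*$ already gives integer values for those coordinates, and in particular each $y^i:=x_\beta^i+x_\gamma^i$ (for $i\ne i^*$) is an integer. Since $c\in\ZZ$, the equation $S_{A,\alpha}=c$ then forces $y^{i^*}\in\ZZ$ as well. It remains to show the two local active constraints at $i^*$ combined with $y^{i^*}\in\ZZ$ pin the values $x_\alpha^{i^*},x_\beta^{i^*},x_\gamma^{i^*}$ to integers. The key point is: for $S_{A,\alpha}$ to be linearly independent from the local pair at $i^*$, that pair must not already fix $y^{i^*}$. Enumerating the admissible configurations (two box constraints, or one box plus one sum constraint) then shows that at least one of $x_\beta^{i^*}$ or $x_\gamma^{i^*}$ is fixed to an integer by a box constraint, and combining this with $y^{i^*}\in\ZZ$ (and, in the box+sum subcase, the integer total sum $x_\alpha^{i^*}+x_\beta^{i^*}+x_\gamma^{i^*}$) determines all three remaining coordinates as integers.

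The main technical hurdle is the finite case analysis at $i^*$ in Case~(2): one must verify that the local configurations that leave $y^{i^*}$ free (exactly the configurations for which $S_{A,\alpha}$ is linearly independent of them) all produce integer coordinates, while the configurations that would locally fix $y^{i^*}$ (for instance, two box constraints fixing $x_\beta^{i^*}$ and $x_\gamma^{i^*}$, or a box constraint on $x_\alpha^{i^*}$ together with the total-sum equation) are precisely those in which $S_{A,\alpha}$ becomes redundant and Case~(2) does not apply. This is where the structural property ``$S_{A,h}$ does not involve $x_h^{i^*}$'' is used decisively.
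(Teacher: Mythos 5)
Your proof is correct and follows essentially the same strategy as the paper's: a vertex is the common solution of $3n$ linearly independent active constraints, and the block structure of the local constraints together with the $0,\pm1$ coefficients of $S_{A,h}$ (in particular the fact that $S_{A,h}$ does not involve the $x_h^i$) forces integrality. The only real difference is organizational: where you carry out the finite case analysis at the exceptional index $i^*$ by hand, the paper first normalizes with the $(\zz)^n$-action so that no sum constraint $x_\alpha^i+x_\beta^i+x_\gamma^i=c_0^i-1$ needs to appear among the chosen equations, which collapses the case analysis to the single observation that solving $S_{A,\alpha}=c$ for the one remaining variable yields an integer.
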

\begin{proof}
 We can assume, without loss of generality, that $h=\alpha$. Let $v$ be a vertex of $P$. Then the point $v$ must give equality in at least $3n$ inequalities defining $P$. More specifically, $v$ is the solution to a system of $3n$ linearly independent equations arising from the inequalities for $P$. We will show that the solution of such a system of equations is always integral.

Obviously, we can not use both $x_g^i=c_g^i,x_g^i=d_g^i$ (unless $c_g^i=d_g^i$, but in this case, it is again just one equation). Since there is no restriction on the numbers $c_g^i,d_g^i$ except that they are integers, we can assume we may always pick the equality $x_g^i=c_g^i$. We can do that because we are no longer using the inequalities for $P$, we are only proving that some system of equations has an integral solution.

Thus, we have to choose $3n$ independent equation from the following equations:
\begin{itemize}
\item $(a)_g^i$:\;\;\; $x_g^i=c_g^i$, for all $i\in[n],g\in\{\alpha,\beta,\gamma\}$,     
\item $(a)_0^i$:\;\;\; $c_0^i-1= x_\alpha^i+x_\beta^i+x_\gamma^i$, for all $i\in[n]$,
\item $(b)$:\;\;\; $S_{A,\alpha}=c$.
\end{itemize}

For any $i\in[n]$ we can not use equations $(a)_g^i$ for all $g\in\zz$, because they are linearly dependent. We can assume that we do not take any equation of the type $(a)_0^i$. Indeed, for every $i$ let $g_i\in\zz$ be such an element for which we do not use the equation $(a)_g^i$. Now if we act with $g=(g_1,\dots,g_n)\in(\zz)^n$ on the point $v$, we get to the situation where we do not use any equation of type $(a)_0^i$. This can be checked immediately: it is possible that we get different constants, a different set $A$, but we have no restriction on them anyway.

If we do not take the equation $(b)$, the solution to the other $3n$ equations is $x_g^i=c_g^i$ which is integral. If we take the equation $(b)$ and $(3n-1)$ equation of the form $(a)_g^i$, again it can be easily seen after plugging in $(3n-1)$ values to the equation $(b)$ that also, in this case, the solution is integral. 
\end{proof}
\begin{Remark}
The statement of the lemma is true even if $P$ is empty since then it has no vertices. It is also true if we omit some inequalities since we can always add them back with sufficiently large or small constants.
\end{Remark}

\begin{Lemma}\label{l=2}
Let $P_1\subset \RR^{n_1}, P_2\subset \RR^{n_2}$ be two full-dimensional lattice polytopes with vertex 0. Let $Q\subset \RR^{n_1+n_2}$ be the polytope $\conv(P_1\times\{0\} \cup \{0\}\times P_2)$. Then $$V_{\ZZ^{n_1+n_2}}(Q)=V_{\ZZ^{n_1}}(P_1)\cdot V_{\ZZ^{n_2}}(P_2).$$ 
\end{Lemma}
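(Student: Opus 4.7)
The plan is to reduce to the case of simplices by star-triangulating both polytopes from $0$, and then to compute the simplex case via an explicit determinant. Since $0$ is a vertex of $P_1$, we may choose a lattice triangulation $P_1=\bigcup_i S_i$ in which every maximal simplex $S_i$ has $0$ as a vertex (for instance, cone from $0$ over any lattice triangulation of those facets of $P_1$ that do not contain $0$), and similarly $P_2=\bigcup_j T_j$. For each pair put
$$Q_{i,j} := \conv\bigl(S_i\times\{0\}\,\cup\,\{0\}\times T_j\bigr).$$

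For the simplex case, write $S_i=\conv(0,v_1,\dots,v_{n_1})$ and $T_j=\conv(0,w_1,\dots,w_{n_2})$. Since the $v_k$ are linearly independent and the $w_\ell$ are linearly independent, and the two families lie in complementary coordinate subspaces of $\RR^{n_1+n_2}$, the $n_1+n_2$ vectors $(v_k,0),(0,w_\ell)$ are linearly independent; thus $Q_{i,j}$ is an $(n_1+n_2)$-simplex with apex $(0,0)$. Its lattice volume equals $|\det M|$, where $M$ is the matrix of edge vectors from $(0,0)$; this matrix is block diagonal with blocks $A=[v_1|\cdots|v_{n_1}]$ and $B=[w_1|\cdots|w_{n_2}]$, so
$$V_{\ZZ^{n_1+n_2}}(Q_{i,j})=|\det A|\cdot|\det B|=V_{\ZZ^{n_1}}(S_i)\cdot V_{\ZZ^{n_2}}(T_j).$$

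It remains to check that $\{Q_{i,j}\}$ is a polyhedral subdivision of $Q$, so that lattice volumes add. The covering $Q=\bigcup_{i,j}Q_{i,j}$ is immediate: every point of $Q$ has the form $(\lambda p_1,(1-\lambda)p_2)$ with $p_1\in P_1,p_2\in P_2$, and it lies in $Q_{i,j}$ whenever $p_1\in S_i,p_2\in T_j$. For pairwise interior disjointness, observe that any point $(x,y)$ in the interior of $Q_{i,j}$ admits a representation $(x,y)=(\lambda_1 f_1,\lambda_2 f_2)$ with $\lambda_1,\lambda_2>0$, $\lambda_1+\lambda_2<1$, with $f_1$ in the relative interior of the facet of $S_i$ opposite $0$, and $f_2$ in the relative interior of the facet of $T_j$ opposite $0$; since those opposite facets (over all $i$) triangulate the union of facets of $P_1$ avoiding $0$, and likewise for $P_2$, the ray from $0$ through $x$ determines $i$ uniquely and the ray through $y$ determines $j$. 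Once the subdivision is established, additivity of lattice volume yields
$$V_{\ZZ^{n_1+n_2}}(Q)=\sum_{i,j}V_{\ZZ^{n_1+n_2}}(Q_{i,j})=\Bigl(\sum_iV_{\ZZ^{n_1}}(S_i)\Bigr)\Bigl(\sum_jV_{\ZZ^{n_2}}(T_j)\Bigr)=V_{\ZZ^{n_1}}(P_1)\cdot V_{\ZZ^{n_2}}(P_2),$$
as required. The only non-trivial step is the subdivision verification; the determinant calculation and the subsequent additivity are routine.
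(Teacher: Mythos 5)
Your proposal is correct and follows essentially the same route as the paper: star-triangulate $P_1$ and $P_2$ from the vertex $0$, compute the volume of each joined simplex $Q_{i,j}$ as a block-diagonal determinant, verify that the $Q_{i,j}$ subdivide $Q$, and sum. The only cosmetic difference is in the disjointness step, where you use a ray-from-the-origin argument while the paper projects onto the first $n_1$ coordinates; both are valid.
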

\begin{proof}
First, let us consider the case where $P_1$ and $P_2$ are simplices. Then $Q$ is also a simplex. The lattice volume of $P_1$ is the determinant of matrix $M_1$ whose rows are vertices of $P_1$. Similarly, the lattice volume of $P_2$ is the determinant of matrix $M_2$. Then the lattice volume of $Q$ is the determinant of the block-diagonal matrix $M$ with blocks $M_1$ and $M_2$. Clearly, $\det( M)=\det(M_1)\cdot \det (M_2)$.

In the general case, we consider the triangulations $T_1=\{\sigma_1^i\}_{1\le i \le k_1}, T_2=\{\sigma_2^i\}_{1\le i \le k_2}$ of $P_1, P_2$ such that all simplices in $T_1, T_2$ have 0 as a vertex. Such triangulation always exists: First one triangulates (in any way) the facets of $P_1, P_2$ not containing 0, and then one just adds vertex 0 to all of these simplices. 

We claim that $$T=\{\sigma^{i,j}:=\conv (\sigma_1^i\times\{0\} \cup \{0\}\times \sigma_2^j)\}_{1\le i \le k_1,1\le j \le k_2}$$ is a triangulation of $Q$. Indeed, consider any point $(x,y)\in Q$, $x\in \RR^{n_1},y\in \RR^{n_2}$. 
Then $(x,y)=a(x',0)+(1-a)(0,y')$, where $x'\in P_1, y'\in P_2$. The points $(x',0), (0,y')$ belong to some simplices $\sigma_1^i$ and $\sigma_2^j$ and, therefore, $(x,y)\in \sigma^{i,j}$. This proves that simplices in $T$ cover $Q$.

Now we consider two different simplices $\sigma^{i,j}, \sigma^{i',j'}$ in $T$ where, without loss of generality, $i\neq i'$. We project to first $n_1$ coordinates and we obtain $\sigma_1^i$ and $\sigma_1^{i'}$ which intersection has volume 0 since $T_1$ is a triangulation. Thus also the volume of $\sigma^{i,j}\cap \sigma^{i',j'}$ is 0, which shows that $T$ is the triangulation. 

It follows that $$V_{\ZZ^{n_1+n_2}}(Q)=\sum_{i=1}^{k_1}\sum_{j=1}^{k_2} V_{\ZZ^{n_1+n_2}}(\sigma^{i,j})=\sum_{i=1}^{k_1}\sum_{j=1}^{k_2}V_{\ZZ^{n_1}}(\sigma_1^i)\cdot V_{\ZZ^{n_2}} (\sigma_2^j)= $$
$$=\sum_{i=1}^{k_1}V_{\ZZ^{n_1}}(\sigma_1^i)\cdot\sum_{j=1}^{k_2}V_{\ZZ^{n_2}} (\sigma_2^j)=V_{\ZZ^{n_1}}(P_1)\cdot V_{\ZZ^{n_2}}(P_2). $$

\end{proof}

\begin{Corollary}\label{product}
Let $P_1\subset \RR^{n_1}, P_2\subset \RR^{n_2}, \dots, P_l\subset \RR^{n_l}$ be full-dimensional lattice polytopes with vertex 0. Let $Q\subset \RR^{n_1+n_2+\dots+n_l}$ be the polytope
$$\conv\left(\bigcup_{i=1}^l\{0\}^{n_1+\dots+n_{i-1}}\times P_i\times\{0\}^{n_{i+1}+\dots+n_l}\right).$$ 
Then $$V_{\ZZ^{n_1+n_2+\dots+n_l}}(Q)=V_{\ZZ^{n_1}}(P_1)\cdot V_{\ZZ^{n_2}}(P_2)\cdot ... \cdot V_{\ZZ^{n_l}}(P_l).$$ 
\end{Corollary}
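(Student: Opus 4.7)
The natural plan is to argue by induction on $l$. The base case $l=2$ is exactly Lemma \ref{l=2}. For the inductive step, I would split off the first polytope from the rest: set
$$P'=\conv\left(\bigcup_{i=2}^l\{0\}^{n_2+\dots+n_{i-1}}\times P_i\times\{0\}^{n_{i+1}+\dots+n_l}\right)\subset \RR^{n_2+\dots+n_l},$$
so that by the identity $\conv(A\cup B)=\conv(A\cup\conv(B))$ I can rewrite $Q=\conv\bigl(P_1\times\{0\}\cup\{0\}\times P'\bigr)$. Provided $P'$ satisfies the hypotheses of Lemma \ref{l=2}, applying that lemma together with the inductive hypothesis $V_{\ZZ^{n_2+\dots+n_l}}(P')=V_{\ZZ^{n_2}}(P_2)\cdots V_{\ZZ^{n_l}}(P_l)$ would immediately yield the desired formula.

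The inductive step therefore reduces to checking that $P'$ is itself a full-dimensional lattice polytope with $0$ as a vertex. Full-dimensionality is easy: $P'$ contains each embedded $P_i$, and these $l-1$ factors live in complementary coordinate subspaces whose direct sum is all of $\RR^{n_2+\dots+n_l}$. The lattice-polytope property follows from the general fact that every vertex of the convex hull of finitely many lattice polytopes must be a vertex of one of them, hence an integer point. For the vertex condition at $0$, I would pick for each $P_i$ a supporting linear functional $\ell_i$ at $0$ (nonnegative on $P_i$ and vanishing only there), lift it to $\tilde\ell_i$ on $\RR^{n_2+\dots+n_l}$ by reading off the $i$-th block and extending by zero on the others, and take $\ell:=\sum_i\tilde\ell_i$. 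Then $\ell$ is nonnegative on each embedded $P_i$ and vanishes only at $0$, which exhibits $\ell$ as a supporting functional of $P'$ isolating the origin.

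The mildly delicate step is that last verification that $0$ is a vertex of $P'$; the other two conditions are essentially immediate from the definitions. A fully non-inductive alternative would be to directly generalize the triangulation argument from Lemma \ref{l=2}: triangulate each $P_i$ by simplices sharing $0$ as a vertex, form the ``coordinate-joined'' simplices $\sigma^{j_1,\dots,j_l}=\conv\bigl(\bigcup_i\{0\}^{n_1+\dots+n_{i-1}}\times\sigma_i^{j_i}\times\{0\}^{n_{i+1}+\dots+n_l}\bigr)$, and check that this is a triangulation of $Q$ with each piece having lattice volume equal to the product of the lattice volumes of its $l$ constituents (the determinant of the block-diagonal matrix). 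This route works but the pairwise measure-zero intersection check for the joined simplices is fussier than in the two-factor case, which is why I prefer to route everything through Lemma \ref{l=2} by induction.
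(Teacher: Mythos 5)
Your proof is correct and follows exactly the route the paper takes: the paper's entire proof of this corollary is the one-line remark that one proceeds by induction on $l$ with Lemma~\ref{l=2} as the base case. You have simply filled in the details the authors left implicit (in particular the check that the intermediate hull $P'$ is again a full-dimensional lattice polytope with $0$ as a vertex, which your supporting-functional argument handles correctly).
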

\begin{proof}
One can proceed by induction on $l$. The case $l=2$ was shown in Lemma~\ref{l=2}.
\end{proof}

\begin{Lemma}\label{z22-1facet}
For all $g\in\{\alpha,\beta,\gamma\}$ and all sets $A\subset [n]$, the set $\fC_{3,n}\cap H_{A,g}^-$ is a lattice polytope. Its lattice volume in the lattice $\ZZ^{3n}$ is $$\sum_{i=0}^n (-2)^i \binom ni \frac{(3n)!}{(2n+i)!}.$$
\end{Lemma}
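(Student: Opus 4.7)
The plan is to reduce to a canonical case using the symmetries of $\fC_{3,n}$, and then evaluate the resulting volume integral via the Dirichlet integral formula.

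First, I would reduce to $A = \emptyset$ and $g = \alpha$. The $\Aut(\zz)$-action permuting $\{\alpha,\beta,\gamma\}$ immediately lets us assume $g = \alpha$. To reduce $A$ to $\emptyset$, take $h \in (\zz)^n$ with $h_j = \beta$ for $j \in A$ and $h_j = 0$ otherwise. The induced map on $\RR^{3n}$ is a lattice-preserving affine automorphism (it has integer matrix with determinant $\pm 1$ once $x_0^j = 1 - x_\alpha^j - x_\beta^j - x_\gamma^j$ is substituted back) and preserves $\fC_{3,n}$. A short computation using $(hx)_\beta^j + (hx)_\gamma^j = x_0^j + x_\alpha^j = 1 - x_\beta^j - x_\gamma^j$ for $j \in A$ shows $S_{A,\alpha}(hx) = S_{\emptyset,\alpha}(x) - |A|$, so $h$ sends $H_{A,\alpha}^-$ onto $H_{\emptyset,\alpha}^-$. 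Thus it suffices to compute the lattice volume of
$$Q := \fC_{3,n} \cap \Bigl\{\textstyle\sum_{j=1}^n (x_\beta^j + x_\gamma^j) \leq 1\Bigr\}.$$
That $Q$ is a lattice polytope is an immediate application of Lemma~\ref{z22-integers} with $A = [n]$, $h = \alpha$, $c = -1$ (and $c_g^i = 0, d_g^i = 1, c_0^i = 2, d_0^i = 1$), since the new inequality is precisely $S_{[n],\alpha}(x) \geq -1$.

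For the volume, I would integrate out $x_\alpha^j$ first: for fixed $(x_\beta^j, x_\gamma^j)$ satisfying the remaining constraints, $x_\alpha^j$ sweeps $[0, 1-x_\beta^j-x_\gamma^j]$ and contributes a factor $(1-x_\beta^j-x_\gamma^j)$. Then change variables via $t^j = x_\beta^j + x_\gamma^j$, $r^j = x_\beta^j/t^j$ (Jacobian $t^j$) and integrate out $r^j \in [0,1]$, leaving
$$\mathrm{vol}(Q) = \int_{\Delta_n} \prod_{j=1}^n t^j(1 - t^j) \, dt, \qquad \Delta_n = \{t \in \RR^n_{\geq 0} : \textstyle\sum_j t^j \leq 1\}.$$
Expanding $\prod_j(1 - t^j) = \sum_{I \subseteq [n]} (-1)^{|I|} \prod_{i \in I} t^i$ and applying the Dirichlet integral term-wise (each summand with $|I| = i$ evaluates to $\Gamma(3)^{i}\Gamma(2)^{n-i}\Gamma(1)/\Gamma(2n+i+1) = 2^i/(2n+i)!$, weighted by $\binom{n}{i}$) gives $\mathrm{vol}(Q) = \sum_{i=0}^n (-1)^i \binom{n}{i} 2^i/(2n+i)!$. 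Multiplying by $(3n)!$ to pass from Euclidean to lattice volume in $\ZZ^{3n}$ then yields the claimed formula.

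The main obstacle is selecting the right parametrization for the volume integral: the substitution $t^j = x_\beta^j + x_\gamma^j$ with Jacobian $t^j$ collapses each pair $(x_\beta^j, x_\gamma^j)$ into a single simplex coordinate, and combined with the inclusion-exclusion expansion of $\prod_j(1-t^j)$, it reduces the $3n$-dimensional volume to a finite alternating sum of Dirichlet integrals over a single $n$-simplex. Once this reduction is set up, each step is mechanical; the delicate point is simply to recognize that the constraint $\sum_j(x_\beta^j+x_\gamma^j)\leq 1$ automatically subsumes the per-factor constraints $x_\beta^j+x_\gamma^j\leq 1$, making the change of variables clean.
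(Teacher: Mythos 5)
Your proof is correct, and while the reduction to the canonical case $A=\emptyset$, $g=\alpha$ mirrors the paper's, the volume computation takes a genuinely different route. The paper writes $\fC_{3,n}\cap H^-_{\emptyset,\alpha}$ as $Q_\emptyset\setminus\bigcup_i Q_{\{i\}}$ for auxiliary lattice polytopes $Q_B$ (with the extra constraints $x_\alpha^i+x_\beta^i+x_\gamma^i\ge 1$ for $i\in B$), applies inclusion--exclusion over all $B\subseteq[n]$, and evaluates each $V_{\ZZ^{3n}}(Q_B)=2^{|B|}(3n)!/(2n+|B|)!$ by listing vertices and invoking the product-volume result (Corollary~\ref{product}); you instead integrate out $x_\alpha^j$, collapse each pair $(x_\beta^j,x_\gamma^j)$ to $t^j=x_\beta^j+x_\gamma^j$, and evaluate $\int_{\Delta_n}\prod_j t^j(1-t^j)\,dt$ by Dirichlet's formula, with the binomial expansion of $\prod_j(1-t^j)$ playing exactly the role of the paper's sum over $B$ (your term $2^i/(2n+i)!$ matches their $V(Q_B)$ for $|B|=i$). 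Your analytic route is shorter and avoids the triangulation machinery, at the cost of being less uniform with the rest of the paper, which reuses Lemma~\ref{z22-integers} and Corollary~\ref{product} elsewhere; you do still correctly invoke Lemma~\ref{z22-integers} for the lattice-polytope claim, which the integral alone would not give. One small point in your favor: your choice of acting with $h_j=\beta$ for $j\in A$ is the right one, since $(hx)_\beta^j+(hx)_\gamma^j=1-x_\beta^j-x_\gamma^j$ shifts $S_{A,\alpha}$ by $|A|$ as needed, whereas acting with $\alpha$ in coordinate $j$ fixes $x_\beta^j+x_\gamma^j$ (the paper's stated choice of $g^A_i=\alpha$ appears to be a slip that your computation implicitly corrects).
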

\begin{proof}
For the beginning, we notice that $\fC_{3,n}\cap H_{A,g}^-$ is a lattice polytope, by Lemma \ref{z22-integers}. Without loss of generality, we may assume that $g=\alpha$. Consider $g^A\in(\zz)^n$ such that $g^A_i=\alpha$ for $i\in A$ and $g^A_i=0$ for $i\not\in A$. Then $g^A(\fC_{3,n}\cap H_{A,\alpha}^-)=\fC_{3,n}\cap H_{\emptyset,\alpha}^-$, thus, without loss of generality, we may assume also that $A=\emptyset.$

Consider any set $B\subset [n]$. Let us define a polytope $Q_B$ by the following inequalities:
\begin{itemize}
\item $x_g^i\ge 0$, for all $i\in[n]$, $g\in\{\alpha,\beta, \gamma\},$
\item $x_\alpha^i\le 1$, for all $i\in[n]$,
\item $S_{\emptyset,\alpha}(x)\le 1$,
\item $x_\alpha^i+x_\beta^i+x_\gamma^i\ge 1$, for all $i\in B$.
\end{itemize}

Note that, clearly $Q_B\subset [0,1]^{3n}$, thus, it is a polytope. Moreover, by Lemma~\ref{z22-integers}, $Q_B$ is a lattice polytope for any set $B$.

Also note that $$\fC_{3,n}\cap H_{\emptyset,\alpha}^-=Q_\emptyset\setminus \bigcup_{i=1}^n Q_{\{i\}} \text{ and } Q_B=\bigcap_{i\in B} Q_{\{i\}}.$$

Therefore, by the principle of inclusion and exclusion, we get $$(*):\;\;\;\; V_{\ZZ^{3n}}(\fC_{3,n}\cap H_{\emptyset,\alpha}^-)=\sum_{B\subset [n]} (-1)^{|B|}V_{\ZZ^{3n}}(Q_B).$$

We will finish the proof of the lemma by computing the lattice volume of the polytopes $Q_B$.

First, we note that the only inequalities concerning coordinates $x_\alpha^i$, for $i\not\in B$ are $0\le x_\alpha^i\le 1$. Therefore $Q_B=Q'_B\times [0,1]^{n-|B|}$ where $Q'_B$ is the projection of $Q_B$ on the other $2n+|B|$ coordinates.

Next, we consider the coordinates $x_\beta^j$ for $j\not\in B$. We know that all vertices of $Q'_B$ are integral and all of them have coordinate $x_\beta^j$ equal to 0 or 1. The fact that $x_\beta^j\le 1$ follows from $S_{\emptyset,\alpha}(x)\le 1$. Let $v$ be a vertex of $Q'_B$ such that $v_\beta^j=1$. From $S_{\emptyset,\alpha}(v)\le 1$ it follows that $v_g^i=0$, for all $i\in [n], g\in\{\beta,\gamma\}, (i,g)\neq (j,\beta) $. Then from $v_\alpha^i\le 1$ and $v_\alpha^i+v_\beta^i+v_\gamma^i\ge 1$ it follows that $v_\alpha^i=1$ for all $i\in B$.

We conclude that there is a unique vertex $v$ with $v_\beta^j=1$. Analogously, there is a unique vertex $v$ with $v_\gamma^j=1$. This is true for all $j\not\in B$. This implies that the lattice volume of $Q'_B$ is the same as the lattice volume of the polytope $\{x\in Q'_B;\ \forall i\not\in B: x_\beta^i=x_\gamma^i=0\}$ in the corresponding $3|B|$-dimensional lattice.

To simplify, we consider the affine transformation $\varphi$, where $\varphi(x)^i_\alpha=1-x_\alpha^i$ for all $i\in B$ and $\varphi(x)^i_g=
x_g^i$ for all other pairs $(i,g)$. We denote $Q''_B$ the projection to $3|B|$ coordinates $x_g^i$ for $i\in B$ of the polytope $\{x\in \varphi(Q'_B);\ \forall i\not\in B: x_\beta^i=x_\gamma^i=0\}$. Notice that $Q''_B$ still has the same lattice volume as $Q'_B$. To sum up, the polytope $Q''_B\subset \RR^{3|B|}$ is defined by the following inequalities:

\begin{itemize}
\item $x_g^i\ge 0$, for all $i\in B$, $g\in\{\alpha,\beta, \gamma\},$
\item $x_\alpha^i\le 1$, for all $i\in B$,
\item $\sum_{i\in A\cap B} (x_\beta^i+x_\gamma^i)-\sum_{i\in B\setminus A}  (x_\beta^i+x_\gamma^i) \le 1$,
\item $-x_\alpha^i+x_\beta^i+x_\gamma^i\ge 0$, for all $i\in B$.
\end{itemize}

Notice that the inequalities $x_\alpha^i\le 1$ are actually redundant, but this is not an issue. Since we know that all vertices of $Q''_B$ are in $\{0,1\}^{3|B|}$, computing all of them is straightforward:

$$0,\ e_\beta^i,\ e_\gamma^i,\ e_\alpha^i+e_\beta^i,\ e_\alpha^i+e_\gamma^i \text{ for all } i\in B.$$

Now we use Corollary~\ref{product} for the polytopes $R_i,i\in B$ where $R_i=\conv\{0,\ e_\beta^i,\ e_\gamma^i,\ e_\alpha^i+e_\beta^i,\ e_\alpha^i+e_\gamma^i\}$. By straightforward computation, all polytopes $R_i$ have lattice volume 2, therefore the lattice volume of $Q''_B$ is $2^{|B|}$.

It follows that the lattice volume of $Q'_B=2^{|B|}$ and $V(Q'_B)=2^{|B|}/(2n+|B|)!$. Consequently $V(Q_B)=2^{|B|}/(2n+|B|)!$ and $V_{\ZZ^{3n}}(Q_B)=2^{|B|}\cdot (3n)!/(2n+|B|)!$.

Plugging it into $(*)$ we obtain:

$$V_{\ZZ^{3n}}(\fC_{3,n}\cap H_{\emptyset,\alpha}^-)=\sum_{B\subset [n]} (-1)^{|B|}\cdot 2^{|B|}\cdot \frac{(3n)!}{(2n+|B|)!}=\sum_{i=0}^n \binom ni (-2)^{i} \frac{(3n)!}{(2n+i)!},$$

which concludes the lemma.
\end{proof}

\begin{Lemma}\label{z22-2facet}
For all $g,h\in\{\alpha,\beta,\gamma\}$, $g\neq h$ and all sets $A,B\subset [n]$, the lattice volume in the lattice $\ZZ^{3n}$ of the polytope $\fC_{3,n}\cap H_{A,g}^-\cap H_{B,h}^-$ is equal to $\binom{2n}{n}-n/2^{n-1}$.
\end{Lemma}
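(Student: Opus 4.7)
The plan is to mimic the inclusion-exclusion strategy of Lemma~\ref{z22-1facet}, but now cutting against the simplex constraints of $\fC_{3,n}$ in the presence of \emph{two} sum inequalities. First, using the $(\zz)^n$-translation action (to zero out both $A$ and $B$ one after the other) together with an appropriate element of $\Aut(\zz)$, I would reduce to the case $A=B=\emptyset$ and $\{g,h\}=\{\alpha,\beta\}$. So it suffices to compute the lattice volume of $P:=\fC_{3,n}\cap H^-_{\emptyset,\alpha}\cap H^-_{\emptyset,\beta}$.

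Next, introduce the auxiliary polytope $R\subset\RR^{3n}$ defined by $x^i_g\ge 0$ together with the two sum inequalities $\sum_i(x^i_\beta+x^i_\gamma)\le 1$ and $\sum_i(x^i_\alpha+x^i_\gamma)\le 1$, but no per-index simplex constraint. For $B\subset[n]$ set $R_B=R\cap\bigcap_{i\in B}\{x^i_\alpha+x^i_\beta+x^i_\gamma\ge 1\}$, so that $P=R\setminus\bigcup_{i=1}^n R_{\{i\}}$ and inclusion-exclusion gives $V_{\ZZ^{3n}}(P)=\sum_{B\subset[n]}(-1)^{|B|}V_{\ZZ^{3n}}(R_B)$. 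I would then show that only $|B|\le 1$ contributes. Adding the two sum inequalities and subtracting the simplex violations for $i\in B$ produces
\begin{equation*}
\sum_{i\notin B}(x^i_\alpha+x^i_\beta)+2\sum_{i\notin B}x^i_\gamma+\sum_{i\in B}x^i_\gamma\le 2-|B|,
\end{equation*}
whose left-hand side is nonnegative. For $|B|\ge 3$ this is infeasible, while for $|B|=2$ equality forces each of the listed coordinates to vanish and pins the remaining freedom into a one-dimensional affine subspace, so $R_B$ has lattice volume zero. Hence $V_{\ZZ^{3n}}(P)=V_{\ZZ^{3n}}(R)-n\,V_{\ZZ^{3n}}(R_{\{1\}})$ by the $\Sym_n$-symmetry.

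It remains to evaluate the two volumes. For $V_{\ZZ^{3n}}(R)$, integrating first over the $x^i_\gamma$ (setting $W=\sum_i x^i_\gamma$) separates the $x^i_\alpha$ and $x^i_\beta$ integrals into two simplex volumes of side $1-W$, and a standard Dirichlet/beta computation yields $V_{\ZZ^{3n}}(R)=\binom{2n}{n}$. For $V_{\ZZ^{3n}}(R_{\{1\}})$, fix the slacks $A=\sum_{j\ge 2}(x^j_\alpha+x^j_\gamma)$ and $B=\sum_{j\ge 2}(x^j_\beta+x^j_\gamma)$ and integrate over $(x^1_\alpha,x^1_\beta,x^1_\gamma)$; a careful case split shows the inner integral equals $\max(0,1-A-B)^3/6$. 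For the outer integral, the substitution $x^j_\gamma\mapsto x^j_\gamma/2$ (for $j\ge 2$) contributes a Jacobian $(1/2)^{n-1}$ and turns the remaining constraint into a standard simplex $\sum(x^j_\alpha+x^j_\beta+x^j_\gamma)\le 1$ in $\RR^{3(n-1)}$; the Dirichlet integral $\int_{\Delta}(1-s)^3\,dy=6/(3n)!$ then yields $V_{\ZZ^{3n}}(R_{\{1\}})=1/2^{n-1}$, producing the stated formula $\binom{2n}{n}-n/2^{n-1}$. The main technical obstacle is the case split in the inner integral for $R_{\{1\}}$; the rescaling trick on the $\gamma$-coordinates is what reveals the clean factor $1/2^{n-1}$.
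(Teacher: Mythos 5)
Your proof is correct and follows the same overall decomposition as the paper's: after the identical reduction to $A=B=\emptyset$ and $\{g,h\}=\{\alpha,\beta\}$ via the group actions, your $R$ and $R_{\{j\}}$ are exactly the paper's $Q_0$ and $Q_j$, and your observation that the terms with $|B|\ge 2$ vanish is the same computation the paper uses to show that $Q_j\cap Q_{j'}$ is not full-dimensional. The only genuine divergence is in how the two constituent volumes are evaluated: the paper shows that each $e_\gamma^i$ is a vertex of $Q_0$ lying on all facets but one, so $Q_0$ collapses to a product of two unit $n$-simplices of normalized volume $\binom{2n}{n}$, and it identifies each $Q_j$ as a simplex by checking redundancy of two inequalities, listing its $3n+1$ vertices, and reading off $1/2^{n-1}$ from a triangular determinant; you instead compute both volumes by direct Dirichlet/beta integration (slicing $R$ along $\sum_i x_\gamma^i$, and for $R_{\{1\}}$ integrating out the first block and rescaling the $\gamma$-coordinates). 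Your route avoids enumerating vertices and verifying that the candidate points are indeed vertices, at the price of the case analysis in the inner integral; both methods correctly yield $\binom{2n}{n}$ and $1/2^{n-1}$, hence the stated value $\binom{2n}{n}-n/2^{n-1}$.
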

\begin{Remark}
From the statement, it follows directly that $\fC_{3,n}\cap H_{A,g}^-\cap H_{B,h}^-$ cannot be a lattice polytope, since its lattice volume is not an integer. We still use the convention that $V_{\ZZ^{3n}}(P)=V(P)\cdot(3n)!$.
\end{Remark}
\begin{proof}
Without loss of generality, we may assume that $g=\alpha, h=\beta.$ Consider $g^{A,B}\in(\zz)^n$ such that
\begin{itemize}
\item $g^{A,B}_i=0$ for $i\notin A\cup B$,
\item $g^{A,B}_i=\alpha$ for $i\in A\setminus B$, 
\item $g^{A,B}_i=\beta$ for $i\in B\setminus A$, 
\item $g^{A,B}_i=\gamma$ for $i\in A\cap B$.
\end{itemize}
Then $g^{A,B}(\fC_{3,n}\cap H_{A,\alpha}^-\cap H_{B,\beta}^-)=\fC_{3,n}\cap H_{\emptyset,\alpha}^-\cap H_{\emptyset,\beta}^-$. This implies that we may also assume $A=B=\emptyset$.

Similarly as in Lemma~\ref{z22-1facet}, we define polytopes $Q_j$ for $j\in [n]$ by the following inequalities:

\begin{itemize}
\item $x_g^i\ge 0$, for all $i\in[n]$, $g\in\{\alpha,\beta, \gamma\},$
\item $S_{\emptyset,\alpha}(x)\le 1$, $S_{\emptyset,\beta}(x)\le 1$,
\item $x_\alpha^j+x_\beta^j+x_\gamma^j\ge 1$,
\end{itemize}

We also define the polytope $Q_0$ by omitting the last inequality in the description of $Q_j$. 

We note that the conditions $S_{\emptyset,\alpha}(x)\le 1$, $S_{\emptyset,\beta}(x)\le 1$ implies $x_g^i\le 1$, thus $Q_j\subset [0,1]^{3n}$ and therefore it is a polytope. However, as we will see soon, $Q_j$ are not lattice polytopes.

Firstly, we consider the intersection $Q_j\cap Q_{j'}$ for $j,j'\in [n]$, $j\neq j'$. Consider $x\in Q_j\cap Q_{j'}$. We note that 

$$2\le x_\alpha^j+x_\beta^j+x_\gamma^j+x_\alpha^{j'}+x_\beta^{j'}+x_\gamma^{j'}\le S_{\emptyset,\alpha}(x)+S_{\emptyset,\beta}(x)\le 2.$$

This means equality must hold everywhere, in particular, $x_\gamma^j=0$ which means that $Q_j\cap Q_{j'}$ lies in the hyperplane given by $x_\gamma^j=0$ and, therefore, $V(Q_j\cap Q_{j'})=0$.

Next, we see that $$\fC_{3,n}\cap H_{\emptyset,\alpha}^-\cap H_{\emptyset,\beta}^-=Q_0\setminus \bigcup_{j=1}^n Q_j.$$
and, therefore,
$$(**):\;\;\;\; V_{\ZZ^{3n}}(\fC_{3,n}\cap H_{\emptyset,\alpha}^-\cap H_{\emptyset,\beta}^-)=V_{\ZZ^{3n}}(Q_0)-\sum_{j=1}^n V_{\ZZ^{3n}}(Q_j).$$

It remains to compute the lattice volume of $Q_0$ and $Q_j$ with $j\in [n]$. We start with $Q_0$.

Consider the point $e_\gamma^i$. It can be easily checked that $e_\gamma^i\in Q_0$. Moreover, the point $e_\gamma^i$ gives us equality in all inequalities except $x_\gamma^i=0$. We can pick $3n$ linearly independent out of them, from which it follows that $e_\gamma^i$ is a vertex of $Q_0$. Since it lies on all except one facet of $Q_0$, all other vertices must lie on the last facet $x_\gamma^i=0$.

From this, it follows that the lattice volume of $Q_0$ is the same as the lattice volume of $Q'_0$ which is the projection of $\{x\in Q_0: \forall i\in [n],\; x_\gamma^i=0\}$ to $2n$ coordinates $x_\alpha^i,x_\beta^i$.

The polytope $Q'_0\subset \RR^{2n}$ is defined by the following inequalities:

\begin{itemize}
\item $x_g^i\ge 0$, for all $i\in[n]$, $g\in\{\alpha,\beta\},$
\item $\sum_{i=1}^n x_\alpha^i \le 1$,\; $\sum_{i=1}^n x_\beta^i \le 1$.
\end{itemize}

From this description, it can be seen that $Q'_0$ is the product of two $n$-dimensional unit simplices. Therefore, $V(Q'_0)=(1/n!)^2$ and $V_{\ZZ^{2n}}(Q'_0)=(2n!)/((n!)^2)=\binom{2n}{n}$. Finally, the lattice volume of $Q_0$ is also $\binom{2n}{n}$.

Now we compute the lattice volumes of polytopes $Q_j$. Without loss of generality, we may assume $j=1$. Notice that the inequalities $x_\alpha^1\ge 0$ and $x_\beta^1\ge 0$ in the facet description of $Q_1$ are redundant: indeed, from $S_{\emptyset,\alpha}(x)\le 1$ and $x_g^j\ge 0$ for $j>1$ it follows that $x_\beta^1+x_\gamma^1\le 1$. Combining with $x_\alpha+x_\beta^1+x_\gamma^1\ge 1$ we obtain $x_\alpha^1 \ge 0$. Analogously for the inequality $x_\beta^1\ge 0$. Thus, the polytope $Q_1$ is defined by $3n+1$ inequalities and, therefore, it is a simplex. We list all of its vertices and one can easily check that each of them satisfies $3n$ equalities and one inequality:

$$e_\alpha^1+e_\beta^1,\ e_\beta^1,\ e_\alpha^1,\ e_\gamma^1,$$
$$e_\alpha^i+e_\beta^1,\ e_\beta^i+e_\alpha^1,\ \frac 12 (e_\gamma^i+e_\alpha^1+e_\beta^1) \text{ for } 2\le i\le n.  $$

We can compute the lattice volume of this simplex by computing the determinant of the matrix whose rows are differences of all vertices with the vertex $e_\alpha^1+e_\beta^1$. Therefore, we are computing the determinant of the matrix with rows:

$$ -e_\alpha^1,\ -e_\beta^1,\ e_\gamma^1-e_\alpha^1+e_\beta^1.$$
$$e_\alpha^i-e_\alpha^1,\ e_\beta^i-e_\beta^1,\ \frac 12 (e_\gamma^i-e_\alpha^1-e_\beta^1) \text{ for } 2\le i\le n,  $$

This is a lower triangular matrix and, therefore, its determinant is the product of the numbers on the diagonal which is $(-1)^2\cdot (1/2)^{n-1}=1/2^{n-1}.$ We conclude that the lattice volume of every $Q_j$ is $(-1)^2\cdot (1/2)^{n-1}=1/2^{n-1}$.

Now we plug the volumes of $Q_0$ and $Q_j$, with $j\in[n]$, in $(**)$ to obtain:

$$V_{\ZZ^{3n}}(\fC_{3,n}\cap H_{\emptyset,\alpha}^-\cap H_{\emptyset,\beta}^-)=\binom{2n}{n}-\sum_{j=1}^n\frac{1}{2^{n-1}}=\binom{2n}{n}-\frac{n}{2^{n-1}}.$$

\end{proof}

\begin{Lemma}\label{z22-3facet}
For all sets $A,B,C\subset [n]$ such that $|A|+|B|+|C|$ is odd, the lattice volume of the polytope $\fC_{3,n}\cap H_{A,\alpha}^-\cap H_{B,\beta}^-\cap H_{C,\gamma}^-$ in the lattice $\ZZ^{3n}$ is equal to:
\begin{itemize}
\item $4-3/2^{n-1}$,\; if $|A\setminus(B\cup C)|+|B\setminus(A\cup C)|+|C\setminus(A\cup B)|+|A\cap B\cap C|=1,$
\item 0,\; otherwise.
\end{itemize}
\end{Lemma}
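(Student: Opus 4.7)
The plan is to adapt the inclusion--exclusion argument of Lemma~\ref{z22-2facet} to three cutting inequalities. I would first reduce to canonical configurations via the available symmetries. To each $j\in[n]$ attach the indicator triple $(a_j,b_j,c_j)\in\ZZ_2^3$ (so $a_j=1$ iff $j\in A$, and similarly for $B,C$). A short check shows that the $(\zz)^n$ action on $\RR^{3n}$ maps $H_{A,\alpha}^-\cap H_{B,\beta}^-\cap H_{C,\gamma}^-$ to a polytope of the same form with a new triple $(A',B',C')$ obtained by flipping exactly two of $(a_j,b_j,c_j)$ at each $j$; together with $\Aut(\zz)$ and $\mathbb S_n$, these lattice-volume preserving symmetries act transitively within each parity class, so the orbit of $(A,B,C)$ is determined by $k:=\#\{j:a_j+b_j+c_j\equiv 1\pmod 2\}$. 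This $k$ is precisely $|A\setminus(B\cup C)|+|B\setminus(A\cup C)|+|C\setminus(A\cup B)|+|A\cap B\cap C|$, and the hypothesis that $|A|+|B|+|C|=\sum_j(a_j+b_j+c_j)$ is odd forces $k$ odd.

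For $k\geq 3$, take the representative $(A,B,C)=(\{1,\ldots,k\},\emptyset,\emptyset)$. Summing the three defining inequalities and cancelling the $x_\beta^i+x_\gamma^i$ terms with $i\leq k$ gives
\[
2\sum_{i=1}^n x_\alpha^i+2\sum_{i>k}(x_\beta^i+x_\gamma^i)\leq 3-k\leq 0.
\]
Since every summand on the left is nonnegative, each one must vanish, producing $3n-2k\geq n\geq 3$ independent linear equations. The polytope therefore lies in a proper affine subspace and has zero lattice volume.

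For $k=1$, take $(A,B,C)=(\{1\},\emptyset,\emptyset)$ and set $P:=\fC_{3,n}\cap H_{\{1\},\alpha}^-\cap H_{\emptyset,\beta}^-\cap H_{\emptyset,\gamma}^-$. Following the template of Lemma~\ref{z22-2facet}, define $Q_0\subset\RR^{3n}$ by the non-negativity and three facet inequalities only (dropping the simplex constraints $x_\alpha^j+x_\beta^j+x_\gamma^j\leq 1$), and let $Q_j:=Q_0\cap\{x_\alpha^j+x_\beta^j+x_\gamma^j\geq 1\}$ for $j\in[n]$, so that $P=Q_0\setminus\bigcup_j Q_j$ up to measure zero. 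Summing the three facet inequalities yields the \emph{master inequality} $\sum_i x_\alpha^i+\sum_{i>1}(x_\beta^i+x_\gamma^i)\leq 1$ on $Q_0$. For $j>1$, this combined with $x_\alpha^j+x_\beta^j+x_\gamma^j\geq 1$ and the three facet inequalities forces $Q_j$ into a $2$-dimensional affine subspace parametrized by $(x_\alpha^j,x_\beta^j,x_\gamma^j)$, so $V(Q_j)=0$; an analogous manipulation gives $V(Q_i\cap Q_j)=0$ for $i\neq j$. Hence $V(P)=V(Q_0)-V(Q_1)$.

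It remains to compute the two volumes. For $V(Q_1)$ I would verify that $Q_1$ is the simplex on the $3n+1$ vertices
\[
e_\alpha^1,\ e_\beta^1,\ e_\gamma^1,\ e_\beta^1+e_\gamma^1,\ e_\beta^1+e_\gamma^i\ (i>1),\ e_\gamma^1+e_\beta^i\ (i>1),\ \tfrac12(e_\beta^1+e_\gamma^1+e_\alpha^i)\ (i>1),
\]
and row-reduce the matrix of differences from $e_\alpha^1$ by subtracting appropriate multiples of the three ``coordinate-$1$'' rows from each subsequent block. This produces a block-triangular matrix whose top-left $3\times 3$ block has determinant $1$ and each of the $n-1$ subsequent $3\times 3$ blocks has determinant $-1/2$, giving $V_{\ZZ^{3n}}(Q_1)=1/2^{n-1}$. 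For $V(Q_0)$, integrate the $2$-dimensional fiber $(x_\beta^1,x_\gamma^1)$ over the $(3n-2)$-dimensional base of the remaining variables: setting $u=\sum_i x_\alpha^i$, $v=\sum_{i>1}x_\beta^i$, $w=\sum_{i>1}x_\gamma^i$ and $a=1-u-v-w$, a case analysis over the four sign patterns of $a-v$ and $a-w$ shows the fiber area simplifies uniformly to $2a^2-(a-v)_+^2/2-(a-w)_+^2/2$. Dirichlet integration of the three resulting terms against the induced density $u^{n-1}v^{n-2}w^{n-2}/((n-1)!(n-2)!^2)$ over the standard $3$-simplex then gives $V_{\ZZ^{3n}}(Q_0)=4-1/2^{n-2}$, and combining yields $V_{\ZZ^{3n}}(P)=(4-1/2^{n-2})-1/2^{n-1}=4-3/2^{n-1}$. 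The main obstacle will be establishing the uniform fiber-area formula: the substitution $a=1-u-v-w$ makes the four-case expression collapse into a single clean expression, and this collapse is what enables the closed-form Dirichlet integration.
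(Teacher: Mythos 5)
Your proposal is correct, and your handling of the degenerate cases ($k\ge 3$) is essentially the paper's argument (sum the three cutting inequalities, conclude emptiness or containment in a hyperplane), but your main case $k=1$ takes a genuinely different route. The paper normalizes \emph{twice}: after reaching $(\{1\},\emptyset,\emptyset)$ it acts with $(\gamma,0,\dots,0)$ to arrive at the symmetric configuration $A=B=C=\{1\}$, whereupon all four auxiliary polytopes are simplices --- its $Q_0$ has lattice volume $4$ and the three congruent simplices $Q_\alpha,Q_\beta,Q_\gamma$ each have lattice volume $1/2^{n-1}$ --- so the whole computation reduces to listing vertices and evaluating determinants, exactly as in Lemma~\ref{z22-2facet}. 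You instead stay at the asymmetric normalization, which makes your $Q_0$ non-simplicial, and you compute its volume analytically by fibering over $(x_\beta^1,x_\gamma^1)$. I checked the pieces: your uniform fiber-area identity $2a^2-\tfrac12(a-v)_+^2-\tfrac12(a-w)_+^2$ holds in all four sign patterns (the rectangle-minus-corner computation collapses exactly as you predict); the $2a^2$ term integrates against the density $u^{n-1}v^{n-2}w^{n-2}/\bigl((n-1)!\,((n-2)!)^2\bigr)$ to lattice volume $4$, and each $(a-\cdot)_+^2/2$ term to $1/2^{n-1}$ (via the substitution $v\mapsto 2v$, which turns it into a standard Dirichlet integral), giving $V_{\ZZ^{3n}}(Q_0)=4-1/2^{n-2}$; your vertex list for $Q_1$ is correct (the inequalities $x_\beta^1\ge 0$, $x_\gamma^1\ge 0$ and $S_{\{1\},\alpha}\le 0$ are redundant on $Q_1$, so it is cut out by $3n+1$ inequalities and is indeed a simplex), and the block-triangular determinant $1\cdot(-1/2)^{n-1}$ is right, so $4-1/2^{n-2}-1/2^{n-1}=4-3/2^{n-1}$ as required. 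The paper's extra group action buys a purely polyhedral proof (four vertex lists, four determinants); your version handles only two polytopes and needs no second normalization, at the price of an integral computation. Two immaterial slips: for $k\ge 5$ the summed inequality yields an \emph{empty} set rather than ``each summand vanishes'' (that situation occurs only at $k=3$); and for $j>1$ your $Q_j$ is not confined to a $2$-dimensional subspace parametrized by $(x_\alpha^j,x_\beta^j,x_\gamma^j)$, since $x_\beta^1,x_\gamma^1$ remain free --- but equality in the master inequality does force $x_\alpha^i=0$ for all $i\ne j$, so $Q_j$ lies in the hyperplane $x_\alpha^1=0$ and $V(Q_j)=0$ as you claim.
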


\begin{proof}
We denote $$\Delta(A,B,C)=\Bigl(A\setminus(B\cup C)\Bigr)\cup \Bigl(B\setminus(A\cup C\Bigr)\cup\Bigl(C\setminus(A\cup B)\Bigr)\cup\Bigl(A\cap B\cap C\Bigr).$$
First, we notice that 
\[
|\Delta(A,B,C)|=|A|+|B|+|C|-2|A\cap B|-2|A\cap C|-2|B\cap C|+4|A\cap B\cap C|.
\]
which means $|\Delta(A,B,C)|$ is an odd number. Thus, $|\Delta(A,B,C)|\neq 1$ implies $|\Delta(A,B,C)|\ge 3$. We show that in this case $\fC_{3,n}\cap H_{A,\alpha}^-\cap H_{B,\beta}^-\cap H_{C,\gamma}^-$ is not full-dimensional and, therefore, it has (lattice) volume 0.

As in the proof of Lemma~\ref{z22-2facet}, we consider $g^{A,B}\in(\zz)^n$ such that
\begin{itemize}
\item $g^{A,B}_i=0$ for $i\notin A\cup B$,
\item $g^{A,B}_i=\alpha$ for $i\in A\setminus B$, 
\item $g^{A,B}_i=\beta$ for $i\in B\setminus A$, 
\item $g^{A,B}_i=\gamma$ for $i\in A\cap B$.
\end{itemize}

One can check that $g^{A,B}(H_{A,\alpha}^-)=H_{\emptyset,\alpha}^-$, $g^{A,B}(H_{B,\beta}^-)=H_{\emptyset,\beta}^-$, $g^{A,B}(H_{C,\gamma}^-)=H_{\Delta(A,B,C),\gamma}^-$.

Thus we may restrict to the case $A=B=\emptyset$, and $C$ is any set with odd cardinality with at least 3 elements.

Consider $x\in \fC_{3,n}\cap H_{\emptyset,\alpha}^-\cap H_{\emptyset,\beta}^-\cap H_{C,\gamma}^-$. By summing up the inequalities for $H_{\emptyset,\alpha}^-$, $H_{\emptyset,\beta}^-$ and  $H_{C,\gamma}^-$, we get $$3-|C|\ge 2\sum_{i\not\in C} (x_\alpha^i+x_\beta^i+x_\gamma^i)+\sum_{i\in C} 2x_\gamma^i\ge 0.$$  

This means that for $|C|\ge 5$ it is an empty set. For $|C|=3$ there must be equality everywhere. In particular, $x_\gamma^i=0$ for any $i\in C$, which means that $\fC_{3,n}\cap H_{\emptyset,\alpha}^-\cap H_{\emptyset,\beta}^-\cap H_{C,\gamma}^-$ lies in the hyperplane $x_\gamma^i=0$ for any $i\in C$, thus, it is not full-dimensional.

Now we move to the case $|\Delta(A, B, C)|=1$. As in the previous case, we may act with $g^{A, B}$ to get to the situation where $A=B=\emptyset$, $|C|=1$. Without loss of generality, $C=\{1\}$. To make it more symmetric, we act with $(\gamma,0,0,\dots,0)$ to get to the situation $A=B=C=\{1\}$.

We define the polytopes $Q_g$, for $g\in\zz$ by the following inequalities:

\begin{itemize}
\item $x_g^i\ge 0$, for all $2\le i\le n$, $g\in\{\alpha,\beta, \gamma\},$
\item $S_{\{1\},\alpha}(x)\le 0$, $S_{\{1\},\beta}(x)\le 0$, $S_{\{1\},\gamma}(x)\le 0$,
\item $x_\alpha^1+x_\beta^1+x_\gamma^1\le 1$,
\item $x_g^1\le 0$.
\end{itemize}

In the case $g=0$, the last inequality is omitted. 

Summing up the inequalities, we obtain 
\[0\ge S_{\{1\},\alpha}(x)+S_{\{1\},\beta}(x)\le 0+S_{\{1\},\gamma}(x)
\]
\[
=2\sum_{i=2}^n (x_\alpha^i+x_\beta^i+x_\gamma^i)-2(x_\alpha^1+x_\beta^1+x_\gamma^1)\ge 2\sum_{i=2}^n (x_\alpha^i+x_\beta^i+x_\gamma^i)-2.
\]

The last inequality implies $\sum_{i=2}^n (x_\alpha^i+x_\beta^i+x_\gamma^i)\le 1$ and $(x_\alpha^i+x_\beta^i+x_\gamma^i)\le 1$ for all $2\le i\le n$. From this, it follows that 
$$\fC_{3,n}\cap H_{\{1\},\alpha}^-\cap H_{\{1\}\beta}^-\cap H_{\{1\},\gamma}^-=Q_0\setminus \Bigl(Q_\alpha \cup Q_\beta \cup Q_\gamma \Bigr).$$

If we consider $x\in Q_\alpha\cap Q_\beta$, from $S_{\{1\},\gamma}\le 0$, we obtain that $x_\alpha^1+x_\beta^1\le 0$. Then the inequalities $x_\alpha^1,x_\beta^1\le 0$ force $x_\alpha^1=x_\beta^1=0$ which implies that $Q_\alpha\cap Q_\beta$ is not full-dimensional. Analogously $Q_\beta\cap Q_\gamma$ and $Q_\alpha\cap Q_\gamma$ are not full-dimensional. Therefore,

$$(***):\;\;\;\; L_{\ZZ^{3n}}\Bigl(\fC_{3,n}\cap H_{\{1\},\alpha}^-\cap H_{\{1\}\beta}^-\cap H_{\{1\},\gamma}^-\Bigr)=L_{\ZZ^{3n}}(Q_0)-L_{\ZZ^{3n}}(Q_\alpha)-L_{\ZZ^{3n}}(Q_\beta)-L_{\ZZ^{3n}}(Q_\gamma).$$

We will conclude the lemma by computing the volumes of $Q_g$ for $g\in \zz$.
We start with the polytope $Q_0$. It is defined by only $3n+1$ inequalities, which means that it is a simplex, providing that it is full-dimensional. We will simply list all of its $(3n+1)$ vertices, it is straightforward to check that each of them satisfies $3n$ equalities and one inequality:

$$0,\ -e_\alpha^1+e_\beta^1+e_\gamma^1,\ e_\alpha^1-e_\beta^1+e_\gamma^1,\ e_\alpha^1+e_\beta^1-e_\gamma^1,$$

$$e_g^1+e_g^i \text{ for all } 2\le i\le n, g\in\{\alpha,\beta,\gamma\}.$$

Its lattice volume is the determinant of the matrix whose rows are all vertices except 0. It is a block lower triangular matrix, thus its determinant is equal to 
$$
\det\begin{pmatrix}
-1&1&1\\
1&-1&1\\
1&1&-1
\end{pmatrix}
=4.$$

Now we compute the volume of the polytope $Q_\gamma$. Clearly, all the polytopes $Q_\alpha$, $Q_\beta$, and $Q_\gamma$ have the same lattice volume. From the inequalities for $Q_\gamma$ we get:
\[
S_{\{1\},\gamma}(x)=\sum_{i=2}^n (x_\alpha^i+x_\beta^i)-(x_\alpha^1+x_\beta^1) \le
\]
\[
\le \sum_{i=2}^n (x_\alpha^i+x_\beta^i+2x_\gamma^i)-(x_\alpha^1+x_\beta^1+2x_\gamma^1)=S_{\{1\},\alpha}(x)+S_{\{1\},\beta}(x)\le 0.
\]

Therefore, the inequality $S_{\{1\},\gamma}(x)\le 0$ is redundant since it follows from other inequalities. This means that also polytope $Q_\gamma$ is defined by only $3n+1$ inequalities and it is a simplex. Again, we list all of its vertices:

$$0,\ e_\alpha^1,\ e_\beta^1,\ e_\alpha^1+e_\beta^1-e_\gamma^1,$$
$$e_\alpha^1+e_\alpha^i,\ e_\beta^1+e_\beta^i,\ \frac 12 (e_\alpha^1+e_\beta^1+e_\gamma^i) \text{ for all } 2\le i\le n.$$

The volume of this simplex is the absolute value of the determinant of the matrix whose rows are all vertices of $Q_\gamma$ except 0. Since it is a lower triangular matrix its determinant is $-(1/2)^{n-1}$, therefore the lattice volume of $Q_\gamma$ is $1/2^{n-1}$.

Now we simply plug the lattice volumes of $Q_g$ in $(***)$ to obtain:

$$L_{\ZZ^{3n}}\Bigl(\fC_{3,n}\cap H_{\{1\},\alpha}^-\cap H_{\{1\}\beta}^-\cap H_{\{1\},\gamma}^-\Bigr)=4-\frac{3}{2^{n-1}}.$$
\end{proof}

\begin{Theorem}
For all $n\ge 2$, the lattice volume of the polytope $P_{\zz,n}$ in the lattice $L_{\zz,n}$ is $$\frac{(3n)!}{4\cdot 6^n}-3\cdot 2^{n-3}\cdot \sum_{i=0}^n (-2)^i \binom ni \frac{(3n)!}{(2n+i)!} + 3\cdot 4^{n-2}\binom{2n}{n}-n\cdot 4^{n-1}.$$
\end{Theorem}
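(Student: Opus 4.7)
The plan is to follow the same strategy as in the $\ZZ_2$ case (Theorem \ref{Z2-volume}), starting from the ambient product of simplices $\fC_{3,n}$ and cutting off the regions $\fC_{3,n}\cap H_{A,g}^-$ for every $g\in\{\alpha,\beta,\gamma\}$ and every $A\in\mathcal{P}_{odd}([n])$. By Theorem \ref{facetdescription_zz},
\[
P_{\zz,n}=\fC_{3,n}\setminus\bigcup_{g,A}\bigl(\fC_{3,n}\cap H_{A,g}^-\bigr),
\]
so inclusion--exclusion expresses $V_{\ZZ^{3n}}(P_{\zz,n})$ as $V_{\ZZ^{3n}}(\fC_{3,n})=(3n)!/6^n$ minus an alternating sum over subfamilies $\mathcal S$ of pairs $(A,g)$ of the lattice volumes of the corresponding intersections. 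At the end I would divide by the index $[\ZZ^{3n}:L_{\zz,n}]=4$ to pass to the lattice volume in $L_{\zz,n}$.

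The observation that truncates the inclusion--exclusion is Lemma \ref{z22-emptyintersection}: as soon as $\mathcal S$ contains two cut-off regions sharing the same $g$, their intersection is not full-dimensional and contributes zero. Hence only subfamilies with pairwise distinct $g$-values matter, forcing $|\mathcal S|\le 3$. The three non-vanishing layers are then read off from the lemmas already established. Single cut-offs: there are $3\cdot 2^{n-1}$ of them, each of lattice volume $S_n:=\sum_{i=0}^n(-2)^i\binom{n}{i}\tfrac{(3n)!}{(2n+i)!}$ by Lemma \ref{z22-1facet}. Pairs with distinct $g,h$: there are $\binom{3}{2}(2^{n-1})^2=3\cdot 4^{n-1}$, each of volume $\binom{2n}{n}-n/2^{n-1}$ by Lemma \ref{z22-2facet}. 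Triples with the three $g$'s all distinct: by Lemma \ref{z22-3facet}, only those with $|\Delta(A,B,C)|=1$ contribute, each with volume $4-3/2^{n-1}$.

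The main obstacle is the combinatorial count of triples $(A,B,C)\in\mathcal{P}_{odd}([n])^3$ with $|\Delta(A,B,C)|=1$; I claim this count equals $n\cdot 4^{n-1}$. I would verify it by fixing the unique element $i_0\in\Delta(A,B,C)$ ($n$ choices), noting that $i_0$ lies either in exactly one or in all three of $A,B,C$ (four type options for $i_0$), while each of the remaining $n-1$ elements must lie in an even number of the three sets (four options each), giving $4^{n-1}$ raw configurations of the others. A direct parity computation then shows that exactly $4^{n-1}$ of the $4\cdot 4^{n-1}$ combined choices produce $|A|,|B|,|C|$ all odd, and multiplying by the $n$ choices of $i_0$ yields the claim.

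Assembling everything,
\[
V_{\ZZ^{3n}}(P_{\zz,n})=\frac{(3n)!}{6^n}-3\cdot 2^{n-1}\,S_n+3\cdot 4^{n-1}\!\left(\binom{2n}{n}-\frac{n}{2^{n-1}}\right)-n\cdot 4^{n-1}\!\left(4-\frac{3}{2^{n-1}}\right).
\]
The contributions $-3n\cdot 4^{n-1}/2^{n-1}$ coming from the pairs layer and $+3n\cdot 4^{n-1}/2^{n-1}$ from the triples layer cancel exactly, leaving $(3n)!/6^n-3\cdot 2^{n-1}S_n+3\cdot 4^{n-1}\binom{2n}{n}-n\cdot 4^n$. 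Dividing by $4$ produces the announced formula. Aside from the combinatorial count of triples, the proof is a clean bookkeeping assembly of the three preceding lemmas.
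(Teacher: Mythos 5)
Your proposal is correct and follows essentially the same route as the paper: inclusion--exclusion over the cut-off regions, truncated by Lemma~\ref{z22-emptyintersection} to subfamilies with pairwise distinct $g$-values, with the volumes supplied by Lemmas~\ref{z22-1facet}, \ref{z22-2facet}, \ref{z22-3facet} and the same cancellation of the $3n\cdot 4^{n-1}/2^{n-1}$ terms. The only (immaterial) difference is your count of triples with $|\Delta(A,B,C)|=1$ via fixing the singleton element and a parity argument, where the paper instead fixes $A,B$ and observes that $\Delta(A,B,\cdot)$ is an involution, so each of the $n$ admissible singletons determines a unique $C$; both give $n\cdot 4^{n-1}$.
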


\begin{proof}
From the facet description of the polytope $P_{\zz,n}$, Theorem~\ref{facetdescription_zz}, we can see that $$P_{\zz,n}=\fC_{3,n}\setminus \left(\bigcup_{g\in\{\alpha,\beta,\gamma\}}\bigcup_{A\in \mathcal P_{\text{odd}}([n])} (\fC_{3,n}\cap H_{A,g}^-) \right).$$  

First, we compute $V_{\ZZ^{3n}}(\fC_{3,n})$. Since it is a product of 3-dimensional simplices, we have $V(\fC_{3,n})=(1/6)^n$ and therefore $V_{\ZZ^{3n}}(\fC_{3,n})=(3n)!/6^n$.

We will use the principle of inclusion and exclusion to compute the volume of $P_{\zz,n}$:

$$V_{\ZZ^{3n}} (P_{\zz,n})=V_{\ZZ^{3n}}(\fC_{3,n})+\sum_{k=1}^{3\cdot 2^{n-1}} (-1)^k \sum_{\substack{K\subset \mathcal{P}_{\text{odd}}([n])\times\{\alpha,\beta,\gamma\}\\ |K|=k}}
V_{\ZZ^{3n}}\left(\fC_{3,n} \cap \bigcap_{(A,g)\in K} H_{A,g}^- \right).$$

By Lemma~\ref{z22-emptyintersection}, if we have $g_1=g_2$ for two different elements $(A_1,g_1),(A_2,g_2)\in K$, then $$V_{\ZZ^{3n}}\left(\fC_{3,n} \cap \bigcap_{(A,g)\in K} H_{A,g}^- \right)=0.$$ Thus, we may sum only through such sets $K$ which does not contain such elements. In particular, we also can sum up only through terms with $k\ge 3$. We obtain the following:

$$V_{\ZZ^{3n}} (P_{\zz,n})=\frac{(3n)!}{6^n}-\sum_{(A,g)\in \mathcal{P}_{\text{odd}}([n])\times\{\alpha,\beta,\gamma\}} V_{\ZZ^{3n}}\left(\fC_{3,n} \cap H_{A,g}^- \right)+$$
$$+\sum_{A,B\in \mathcal{P}_{\text{odd}}([n])}\sum_{(g,h)\in \{(\alpha,\beta),(\beta,\gamma),(\alpha,\gamma)\}} V_{\ZZ^{3n}}\left(\fC_{3,n} \cap H_{A,g}^-\cap H_{B,h}^- \right)-$$
$$-\sum_{A,B,C \in \mathcal{P}_{\text{odd}}([n])} V_{\ZZ^{3n}}\left(\fC_{3,n} \cap H_{A,\alpha}^-\cap H_{B,\beta}^-\cap H_{C,\gamma}^- \right).$$

Now we use Lemmas \ref{z22-1facet},\ref{z22-2facet},\ref{z22-3facet} for the terms in the sums. All terms in the first two sums become equal. In the last sum, we can only sum through such triples $(A, B, C)$ for which $|\Delta(A, B, C)|=1$. The number of such triples of sets is $n\cdot |\mathcal{P}_{\text{odd}}([n])|^2=n\cdot 4^{n-1} $ because for every choice of $A, B$ there are exactly $n$ sets $C$ satisfying the condition. This happens because $A,B$ and $\Delta(A,B,C)$ uniquely determine $C$. In fact, $\Delta(A,B,\Delta(A,B,C))=C$. We obtain

$$V_{\ZZ^{3n}} (P_{\zz,n})=\frac{(3n)!}{6^n}-3\cdot 2^{n-1}\cdot \sum_{i=0}^n (-2)^i \binom ni \frac{(3n)!}{(2n+i)!} +$$
$$+3\cdot 4^{n-1}\cdot\left(\binom{2n}{n}-\frac{n}{2^{n-1}}\right)- n\cdot 4^{n-1}\cdot \left(4-\frac 3{2^{n-1}}\right)=$$
$$=\frac{(3n)!}{6^n}-3\cdot 2^{n-1}\cdot \sum_{i=0}^n (-2)^i \binom ni \frac{(3n)!}{(2n+i)!} + 3\cdot 4^{n-1}\binom{2n}{n}-n\cdot 4^{n}.$$

This is the lattice volume in the lattice $\ZZ^{3n}$. However, we are interested in the lattice volume in the lattice $L_{\zz,n}$ which is the lattice generated by the vertices of the polytope $P_{\zz,n}$. Since $L_{\zz,n}$ is a sublattice of $\ZZ^{3n}$ of index 4, we simply divide by 4 to conclude the theorem.

\end{proof}

\begin{Corollary}\label{phylodegZ2xZ2}
The phylogenetic degree of the projective algebraic variety $X_{\zz,n}$ is equal to $$\frac{(3n)!}{4\cdot 6^n}-3\cdot 2^{n-3}\cdot \sum_{i=0}^n (-2)^i \binom ni \frac{(3n)!}{(2n+i)!} + 3\cdot 4^{n-2}\binom{2n}{n}-n\cdot 4^{n-1}.$$
\end{Corollary}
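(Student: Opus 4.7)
The plan is to reduce the computation of the phylogenetic degree to a normalized lattice volume computation via the standard toric dictionary, and then to invoke the preceding theorem. The key ingredient is the classical fact that, for a projective toric variety embedded by the sections of a very ample divisor corresponding to a lattice polytope $P$ in a lattice $L$, the degree of the variety equals the normalized lattice volume $V_L(P)$. This is exactly \cite[Section~5.3]{fulton}, already cited in the introduction of the paper for this purpose.

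First I would recall from the Preliminaries subsection on the defining polytope that $X_{\zz,n}$ is a projective toric variety whose associated lattice polytope is $P_{\zz,n}$, with lattice $L_{\zz,n}$ generated by its vertices (the description from \cite{ss}). Second, I would apply the toric degree formula to conclude
\[
\deg X_{\zz,n} \;=\; V_{L_{\zz,n}}(P_{\zz,n}).
\]
Finally, I would substitute the closed form
\[
V_{L_{\zz,n}}(P_{\zz,n}) \;=\; \frac{(3n)!}{4\cdot 6^n}-3\cdot 2^{n-3}\cdot \sum_{i=0}^n (-2)^i \binom ni \frac{(3n)!}{(2n+i)!} + 3\cdot 4^{n-2}\binom{2n}{n}-n\cdot 4^{n-1}
\]
delivered by the theorem immediately preceding the corollary, which is exactly the asserted expression.

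The main obstacle has already been absorbed into that theorem and its four supporting lemmas: using Lemma~\ref{z22-emptyintersection} to kill every term in the inclusion–exclusion that involves two half-spaces $H_{A,g}^-,H_{B,g}^-$ with the same $g$; computing the single-facet, pair-intersection, and triple-intersection lattice volumes via Lemmas~\ref{z22-1facet}, \ref{z22-2facet}, \ref{z22-3facet}; and finally dividing by the index $4$ of the sublattice $L_{\zz,n}\subset \ZZ^{3n}$. The corollary itself therefore carries no additional difficulty beyond invoking the degree–volume identification for projective toric varieties.
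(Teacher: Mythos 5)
Your proposal is correct and matches the paper's (implicit) argument exactly: the paper states this corollary without further proof, relying on the same degree--equals--normalized-volume dictionary for projective toric varieties applied to the lattice volume of $P_{\zz,n}$ in $L_{\zz,n}$ computed in the preceding theorem. No gaps.
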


\vspace{.2in}
\section{Phylogenetic degrees of \texorpdfstring{$X_{\ZZ_3, n}$}{Z3}}\label{3}
\vspace{.2in}

In this section, we consider the variety $X(\mathbb{Z}_3, T)$, where $T$ is the $n$-claw tree. In order to compute the degree of this variety, we will compute the volume of its associated polytope, $P(\ZZ_3, n)$. We proceed similarly as in the previous two sections. We start with the product of simplices $\fC_{2,n}:=(\Delta_2)^n$, where $\Delta_2=\{x\in\RR^2: x_1,x_2\ge 0,x_1+x_2\le 1\}$ is the 2-dimensional unit simplex.
Again, we cut off parts that are not in $P_{\ZZ_3,n}$ and use the principle of inclusion and exclusion.

 \begin{Theorem}[\cite{RM}, Theorem 3.2]\label{facetsZ3}
The facet description of the polytope $P(\ZZ_3, n)$ is given by: $$x_i^j\ge 0, \text{ for } 1\le i\le 2,1\le j\le n,$$
$$x_1^j+x_2^j\le 1, \text{ for } 1\le j\le n,$$
$$\langle u_{a_1},x^1\rangle+\langle u_{a_2},x^2\rangle+\dots+\langle u_{a_n},x^n\rangle\ge 2-a_1-a_2-\dots-a_n,$$
$$\langle w_{a_1},x^1\rangle+\langle w_{a_2},x^2\rangle+\dots+\langle w_{a_n},x^n\rangle\ge 2-a_1-a_2-\dots-a_n,$$
for all $(a_1,a_2,\dots,a_n)\in \{0,1,2\}^n$ such that $a_1+a_2+\dots+a_n\equiv 2\pmod 3$, where $u_0=(1,2),u_1=(1,-1),u_2=(-2,-1),w_0=(2,1),w_1=(-1,1),w_2=(-1,-2)$ and $x^j=(x_1^j,x_2^j).$
\end{Theorem}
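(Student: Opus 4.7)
The plan is to derive the facet description from the vertex description recalled earlier. After projecting to the $2n$ coordinates indexed by $\{1,2\}\subset\ZZ_3$, the vertices of $P_{\ZZ_3,n}$ become the $0/1$-points $v(g_1,\dots,g_n)$ with $(v^j)_i=\delta_{g_j,i}$, indexed by tuples $(g_1,\dots,g_n)\in\ZZ_3^n$ satisfying $g_1+\dots+g_n\equiv 0\pmod 3$. I would then split the argument into the standard two halves: verifying validity of each listed inequality, and showing that together they cut out nothing larger than $P_{\ZZ_3,n}$.

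For validity, the inequalities $x_i^j\ge 0$ and $x_1^j+x_2^j\le 1$ are immediate from the $0/1$-form of the vertices. The key for the group-based inequalities is the identity
\[
\langle u_a, v^j\rangle + a \;=\; (a+g_j)\bmod 3, \qquad \langle w_a, v^j\rangle + a \;=\; (a-g_j)\bmod 3,
\]
to be checked by a direct computation on the three possible values of $v^j\in\{(0,0),(1,0),(0,1)\}$ for each $a\in\{0,1,2\}$. Summing over $j$ and using $\sum a_j\equiv 2\pmod 3$ together with $\sum g_j\equiv 0\pmod 3$, the right-hand side becomes a sum of elements of $\{0,1,2\}$ whose total is $\equiv 2\pmod 3$, hence at least $2$, which is exactly the required lower bound. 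The $w$-family is handled symmetrically, corresponding to the other nontrivial character of $\ZZ_3$.

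Completeness is the harder half. My plan is, first, to confirm that each listed inequality truly defines a facet by exhibiting $2n$ affinely independent tight vertices: for the $u$-type facet indexed by $(a_1,\dots,a_n)$ these are the vertices $v(g_1,\dots,g_n)$ for which exactly one summand $(a_{j_0}+g_{j_0})\bmod 3$ equals $2$ and all others vanish, and one can parametrize them cleanly using the $H_n$-action. Second, I would show that every vertex of the polyhedron $Q$ cut out by the listed inequalities is integral and lies among the vertices of $P_{\ZZ_3,n}$. The main obstacle is ruling out fractional vertices of $Q$: this is where the $u$- and $w$-inequalities must cooperate, effectively enforcing both nontrivial characters of $\ZZ_3$. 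My strategy would be to use the symmetries $H_n$, $\mathbb S_n$, and $\Aut(\ZZ_3)$ listed in the preliminaries to reduce to a small number of candidate vertex-types of $Q$, and in each remaining case either identify a $u$- or $w$-inequality that is strictly violated or exhibit the candidate explicitly as a convex combination of lattice vertices of $P_{\ZZ_3,n}$.
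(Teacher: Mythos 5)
A point of context first: the paper does not prove this statement at all; it is imported verbatim as Theorem~3.2 of \cite{RM}, so there is no internal argument to compare yours against. Judged on its own terms, your \emph{validity} half is correct and complete. The identity $\langle u_a,v^j\rangle+a=(a+g_j)\bmod 3$ (and its $w$-analogue with $a-g_j$) does check out in all nine cases, and summing over $j$ shows that at every vertex $v(g_1,\dots,g_n)$ with $\sum g_j\equiv 0$ the left-hand side plus $\sum a_j$ is a sum of elements of $\{0,1,2\}$ congruent to $2$ modulo $3$, hence at least $2$. This is a clean way to organize the verification of $P_{\ZZ_3,n}\subset Q$, where $Q$ is the polyhedron cut out by the listed inequalities.

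The gap is in the other inclusion, which is the actual content of the theorem, and it occurs at two levels. First, your facet-verification sub-step already miscounts the tight vertices: for a $u$-type inequality the vertices achieving equality are those for which the summands $(a_j+g_j)\bmod 3$ sum to exactly $2$, i.e.\ either one summand equals $2$ and the rest vanish ($n$ vertices, the only ones you list) \emph{or} two summands equal $1$ and the rest vanish ($\binom{n}{2}$ further vertices). The $n$ vertices you describe cannot span a facet of a full-dimensional polytope in $\RR^{2n}$, so the affine-independence argument must use the second family as well, and you have not produced it. Second, and more seriously, the completeness half ($Q\subset P_{\ZZ_3,n}$) is a declaration of intent rather than an argument: you propose to reduce to ``a small number of candidate vertex-types of $Q$'' via the symmetries and then dispose of each, but you neither exhibit the reduction, nor bound the number of orbits, nor treat a single case. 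A vertex of $Q$ is a basic solution to $2n$ tight inequalities chosen from an exponentially large family, and it is exactly the interaction of several $u$- and $w$-inequalities that can produce fractional solutions (compare the half-integral vertices of the auxiliary polytopes $Q_j$ appearing elsewhere in this paper). Ruling these out is the hard part of the theorem, and as written your argument establishes only one of the two inclusions.
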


Let us denote, for $(a_1,\dots,a_n)=:A\in\{0,1,2\}^n$,
$$S_{A,1}(x)=\sum_{i=1}^n \langle u_{a_i}, x^i\rangle,$$
$$S_{A,2}(x)=\sum_{i=1}^n \langle w_{a_i}, x^i\rangle.$$

Similarly, as in the $\ZZ_2$ and $\zz$ cases, let us denote for $A\in\{0,1,2\}^n$ and $g\in\{1,2\}$: 
$$H^+_{A,g}=\{x\in \RR^{2n}: S_{A,g}(x)\ge 2-\sum_{i=1}^n a_i\},$$
$$H^-_{A,g}=\{x\in \RR^{2n}: S_{A,g}(x)\le 2-\sum_{i=1}^n a_i\},$$
$$H_{A,g}=\{x\in \RR^{2n}: S_{A,g}(x)= 2-\sum_{i=1}^n a_i\}.$$

In this section we will be working with $n$-tuples from $\{0,1,2\}^n$, let us also denote $\mathbf{0}:=(0,0,\dots,0)\in \{0,1,2\}^n$ and $E_i:=(0,0,\dots,0,1,0,\dots,0)\in \{0,1,2\}^n $, where $1$ is on the $i$-th position.

\begin{Lemma}\label{z3-emptyintersection}
For all $g\in\{1,2\}$ and all different $n$-tuples $A,\ B\in \{0,1,2\}^n$ with $\sum_{i=1}^n {a_i}\equiv \sum_{i=1}^n b_i \pmod 3$, and $|\{i\in[n]: a_i\neq b_i\}|>2$, we have that $\fC_{2,n}\cap H_{A,g}^-\cap H_{B,g}^-$ is not full-dimensional, thus its (lattice) volume is zero.
\end{Lemma}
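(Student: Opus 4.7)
The plan is to follow the strategy of Lemmas~\ref{Z2-emptyintersection} and~\ref{z22-emptyintersection}: produce a nonnegative linear combination of the two facet-inequalities whose lower bound, coming from the box constraints $x^i\in\Delta_2$, matches the corresponding right-hand side, thereby squeezing the polytope into a hyperplane. Unlike the $\zz$ case there is no obvious projection onto a smaller group, so a direct weighted estimate is required.

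First, by the $\Aut(\ZZ_3)$-action which swaps $x_1^j\leftrightarrow x_2^j$ and thereby exchanges $u_a\leftrightarrow w_a$ and $H_{A,1}^-\leftrightarrow H_{A,2}^-$, I reduce to the case $g=1$. For any weights $\alpha,\beta\ge 0$, summing the two facet-inequalities yields
\[
\alpha S_{A,1}(x)+\beta S_{B,1}(x)=\sum_{i=1}^n\langle \alpha u_{a_i}+\beta u_{b_i},x^i\rangle\le 2(\alpha+\beta)-\alpha\textstyle\sum a_i-\beta\textstyle\sum b_i,
\]
while on $\Delta_2$ each summand satisfies $\langle v,x^i\rangle\ge\min(0,v_1,v_2)$. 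Setting $\ell(a,b;\alpha,\beta):=\min(0,v_1,v_2)+\alpha a+\beta b$ for $v=\alpha u_a+\beta u_b$, the condition that these two bounds coincide (and so force equality) becomes
\[
\sum_{i=1}^n\ell(a_i,b_i;\alpha,\beta)=2(\alpha+\beta).
\]

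The main technical step, which I expect to be the bulk of the work, is to tabulate $\ell$ over the nine pairs $(a,b)\in\{0,1,2\}^2$ for the three weight choices $(\alpha,\beta)\in\{(1,1),(1,2),(2,1)\}$. A direct check shows that in each case $\ell$ depends only on $b-a\pmod 3$: for $(1,1)$ one gets $\ell=0$ when $a=b$ and $\ell=1$ otherwise; for $(1,2)$ the values are $0,2,1$ on residues $0,1,-1\pmod 3$; and for $(2,1)$ they are $0,1,2$. Writing $p$ (resp.\ $q$) for the number of positions $i$ with $b_i-a_i\equiv 1$ (resp.\ $\equiv -1$) $\pmod 3$, the hypothesis gives $p+q=d(A,B)\ge 3$ and $p\equiv q\pmod 3$.

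A short case analysis then concludes. If $p+q\ge 5$, then with $(\alpha,\beta)=(1,1)$ we get $\sum\ell\ge 5>4=2(\alpha+\beta)$, so the polytope is empty. If $p+q=4$, the divisibility condition forces $(p,q)=(2,2)$, and $(1,1)$ gives $\sum\ell=4=2(\alpha+\beta)$. If $p+q=3$, then $(p,q)\in\{(3,0),(0,3)\}$, and the weight $(1,2)$ (respectively $(2,1)$) gives $\sum\ell=6=2(\alpha+\beta)$. In every equality case, the polytope is contained in the nontrivial hyperplane $\alpha S_{A,1}+\beta S_{B,1}=\alpha(2-\sum a_i)+\beta(2-\sum b_i)$---nontrivial because $\alpha u_{a_i}+\beta u_{b_i}\ne 0$ for all $i$ at the chosen weights---and hence is not full-dimensional.
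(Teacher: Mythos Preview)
Your proof is correct and follows the same route as the paper: reduce to $g=1$, take a weighted combination $\alpha S_{A,1}+\beta S_{B,1}$ of the two facet inequalities, and compare against the simplex lower bound $\min(0,v_1,v_2)$ on each factor to squeeze the polytope into a hyperplane (or show it is empty). The paper first acts by $A\in\ZZ_3^n$ to set $A=\mathbf 0$ and then runs the same case analysis---weights $(1,2)$ when $B$ has at least three $1$'s, the automorphism of $\ZZ_3$ to swap $1$'s and $2$'s, and weights $(1,1)$ for the residual case $(p,q)=(2,2)$---so your systematic tabulation of $\ell$ and direct use of the three weight pairs $(1,1),(1,2),(2,1)$ is simply a tidy repackaging of the same argument.
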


\begin{proof}
Without loss of generality, let $g=1$. We can consider $A, B$ as elements of $\ZZ_3^n$. This allows us to act with $A$ on $\fC_{2,n}\cap H_{A,1}^-\cap H_{B,1}^-$. Note that $A(H_{A,1}^-)=H_{\mathbf{0},1}^-,\ A(H_{B,1}^-)=H_{B-A,1}^- $, where $B-A$ is taken in $\ZZ_3^n$. Thus, we may assume $A=\mathbf{0}$.

First, suppose that in $B$ there are at least three elements equal to one. Without loss of generality, we may consider $b_1=b_2=b_3=1$. We sum up inequalities $2\ge S_{\mathbf{0},1}$ and $ 2( 2-\sum_{i=1}^n b_i)\ge 2S_{B,1}$:

$$-2\sum_{i=4}^n b_i \ge 3x_1^1+3x_1^2+3x_1^3+\sum_{i=4}^n (x_1^i+2x_2^i+2\langle u_{b_i},x^i\rangle)\ge 0+2\sum_{i=4}^n (-b_i).$$

In the last inequality, we use the fact $\langle u_{b_i},x^i \rangle \ge -{b_i}$, which can be easily verified for all values $b_i=0,1,2$. For these inequalities to hold, there must be equality everywhere. In particular $2=S_{\mathbf{0},1}$ which means that $\fC_{2,n}\cap H_{\mathbf{0},1}^-\cap H_{B,1}^-$ lie in the hyperplane $H_{\mathbf{0},1}$ and it is not full-dimensional.

If there are at least two indices $i$ such that $b_i=2$, by acting with $(2,2,\dots,2)$ we get to the previous situation.

Since $\sum_{i=1}^n b_i\equiv 0\pmod 3$, we are left only with the case that in $n$-tuple $B$ there are exactly two ones, two twos, and the rest is zero.
Without loss of generality, we may consider $ b_1=b_2=1,\ b_3=b_4=2$.

In this case, we simply sum up $-4\ge 2S_{B,1}$ and $2\ge S_{\mathbf{0},1}$ to obtain:

$$-2\ge 2x_1^1+x_2^1+2x_1^2+x_2^2-x_1^3+x_2^3-x_1^4+x_2^4+\sum_{i=5}^n (2x_1^i+4x_2^i+\ge -2.$$

Again, there must be equality in all inequalities, and in particular, $2= S_{\mathbf{0},1}$. This implies that, also in this case, $\fC_{2,n}\cap H_{\mathbf{0},1}^-\cap H_{B,1}^-$ is not full-dimensional.
\end{proof}

\begin{Lemma}\label{z3-twoofsametype}
For all $g\in\{1,2\}$ and all different $n$-tuples $A,\ B\in \{0,1,2\}^n$ with $\sum_{i=1}^n {a_i}\equiv \sum_{i=1}^n b_i \equiv 2\pmod 3$, and $|\{i\in[n]: a_i\neq b_i\}|=2$,  there exists $C\in\{0,1,2\}^n$ with $\sum_{i=1}^n\equiv 2\pmod 3 $ such that $\fC_{2,n}\cap H_{A,g}^-\cap H_{B,g}^-\subset H_{C,-g}^-$.
\end{Lemma}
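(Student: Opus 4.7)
The plan is to construct $C$ explicitly and then obtain the desired inclusion by summing the two defining inequalities $S_{A,g}(x) \le 2 - \sum a_i$ and $S_{B,g}(x) \le 2 - \sum b_i$, absorbing any excess into nonnegative slack coming from the simplex constraints $x_1^i, x_2^i, 1 - x_1^i - x_2^i \ge 0$.

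Without loss of generality I take $g = 1$, and I use the $\mathbb{S}_n$-action to assume that $A$ and $B$ differ precisely at positions $1$ and $2$. The arithmetic condition $\sum a_i \equiv \sum b_i \equiv 2 \pmod 3$, combined with $a_i = b_i$ for $i \ge 3$, forces $(b_1 - a_1) + (b_2 - a_2) \equiv 0 \pmod 3$ with both summands nonzero in $\ZZ_3$, so $\{a_i, b_i\}$ is a $2$-element subset of $\{0,1,2\}$ at each of these two positions. I then define $C$ by setting $c_i$ at $i \in \{1, 2\}$ to be the unique element of $\{0, 1, 2\} \setminus \{a_i, b_i\}$, and at $i \ge 3$ setting $c_i = 0, 2, 1$ according as $a_i = 0, 1, 2$.

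The key algebraic input is the identity $u_a + u_b = w_c$ whenever $\{a, b, c\} = \{0, 1, 2\}$, which is an immediate check from the explicit values of $u_j, w_j$. This yields exact equality $\langle u_{a_i}, x^i\rangle + \langle u_{b_i}, x^i\rangle = \langle w_{c_i}, x^i\rangle$ at $i \in \{1, 2\}$. For $i \ge 3$ with $a_i = b_i$, a direct computation gives $2 u_{a_i} - w_{c_i}$ equal to $(0,3), (3,0), (-3,-3)$ according as $a_i = 0, 1, 2$; the first two cases yield $\langle w_{c_i}, x^i\rangle \le 2\langle u_{a_i}, x^i\rangle$ since $x^i \ge 0$, while the third yields $\langle w_{c_i}, x^i\rangle \le 2\langle u_{a_i}, x^i\rangle + 3$ using $x_1^i + x_2^i \le 1$.

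Summing these relations over $i$ and combining with the two facet inequalities gives $S_{C, 2}(x) \le 4 - \sum(a_i + b_i) + 3 p_2$, where $p_2$ counts indices $i \ge 3$ with $a_i = 2$. A short bookkeeping calculation shows that $\sum c_i = \sum(a_i + b_i) - 2 - 3 p_2$, so the right-hand side equals $2 - \sum c_i$, establishing $x \in H_{C, 2}^-$. The same identity gives $\sum c_i \equiv \sum(a_i + b_i) - 2 \equiv 2 \pmod 3$. The main obstacle will be the case $a_i = 2$ at a non-differing position: the coordinatewise inequality $2 u_{a_i} \ge w_{c_i}$ fails there, and one must use the simplex facet $x_1^i + x_2^i \le 1$ to absorb $-3$ of slack per such position; the proof works because this slack is perfectly offset by a compensating $-3$ shift in $\sum c_i$, so both the inequality and the residue class modulo $3$ balance.
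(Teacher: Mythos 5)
Your overall strategy is the right one and is essentially the paper's: sum the two inequalities $S_{A,1}(x)\le 2-\sum a_i$ and $S_{B,1}(x)\le 2-\sum b_i$ and show that the resulting linear form dominates $S_{C,2}(x)$ coordinatewise up to nonnegative slack, with the constants matching $2-\sum c_i$. But the key identity you rely on is false: with $u_0=(1,2)$, $u_1=(1,-1)$, $u_2=(-2,-1)$ and $w_0=(2,1)$, $w_1=(-1,1)$, $w_2=(-1,-2)$ one computes $u_0+u_1=(2,1)=w_0$, not $w_2$, and $u_1+u_2=(-1,-2)=w_2$, not $w_0$; only $u_0+u_2=w_1$ agrees with your claim. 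The correct relation is $u_a+u_b=-u_d=w_{2-d}$, where $d$ is the third element of $\{0,1,2\}$, so at a differing position one must take $c_i=2-d_i$ (equivalently $c_i\equiv a_i+b_i-1\pmod 3$), not $c_i=d_i$. With your choice of $C$ the argument breaks concretely: for $A=(1,1,0)$, $B=(1,0,1)$ you get $C=(2,2,2)$, so $\sum c_i=6\equiv 0\pmod 3$, violating both the required residue class and your own bookkeeping identity $\sum c_i=\sum(a_i+b_i)-2-3p_2$ (whose right-hand side is $2$ here); moreover the point $x=0$ lies in $\fC_{2,n}\cap H_{A,1}^-\cap H_{B,1}^-$ but satisfies $S_{C,2}(0)=0>2-6$, so the claimed inclusion is simply false for that $C$.

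The error is isolated, however. Your treatment of the non-differing positions is correct ($2u_{a_i}-w_{c_i}=(0,3),(3,0),(-3,-3)$ for $a_i=0,1,2$ with $c_i\equiv -a_i$, absorbing the $-3$ per position with $a_i=2$ via $x_1^i+x_2^i\le 1$), and if you replace $c_i$ at the two differing positions by $c_i=2-d_i$, then $\langle u_{a_i},x^i\rangle+\langle u_{b_i},x^i\rangle=\langle w_{c_i},x^i\rangle$ holds exactly there, your identity $\sum c_i=\sum(a_i+b_i)-2-3p_2$ becomes true, and the computation closes up exactly as you describe, giving $\sum c_i\equiv 2\pmod 3$ as required. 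This corrected construction reproduces the paper's proof, which normalizes by the $\ZZ_3^n$-action to $A=E_1+E_2$, $B=E_1+E_3$ and exhibits $C=2E_1$ directly.
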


\begin{proof}
Without loss of generality, we may assume $g=1$ and $\{i\in[n]: a_i\neq b_i\}=\{2,3\}$ and also $a_2-b_2\equiv b_3-a_3\equiv 1\pmod 3$.

Similarly, as in the proof of Lemma~\ref{z3-emptyintersection}, we may act with $A-E_1-E_2$, where we consider $A$ as an element of $\ZZ_3^n$. This allows us to assume, without loss of generality, also that $A=E_1+E_2$ and $B=E_1+E_3$.

By summing up the inequalities for $A$ and $B$, we obtain:

$$0\ge 2x_1^1-2x_2^1+2x_1^2+x_2^2+2x_1^3+x_2^3+\sum_{i=4}^n (2x_1^i+4x_2^i)\ge$$
$$\ge -x_1^1-2x_2^1+2x_1^2+x_2^2+2x_1^3+x_2^3+\sum_{i=4}^n (2x_1^i+x_2^i)=S_{2E_1,2}, $$

which proves the lemma.
\end{proof}

\begin{Lemma}\label{z3-twoofdiftype}
For all different $n$-tuples $A,\ B\in \{0,1,2\}^n$ with $\sum_{i=1}^n {a_i}+\sum_{i=1}^n b_i \equiv 1\pmod 3$, and $|\{i\in[n]: a_i+b_i\equiv 0\pmod 3\}|<n-1$, we have that $\fC_{2,n}\cap H_{A,1}^-\cap H_{B,2}^-$ is not full-dimensional and its (lattice) volume is zero.
\end{Lemma}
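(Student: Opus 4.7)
The plan is to mimic Lemmas~\ref{z3-emptyintersection} and~\ref{z3-twoofsametype}: sum the two defining inequalities and show that the hypotheses force equality in each, so that $\fC_{2,n}\cap H_{A,1}^-\cap H_{B,2}^-$ is contained in $H_{A,1}\cap H_{B,2}$ and hence is not full-dimensional.

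First I would add the inequalities $S_{A,1}(x)\le 2-\sum_i a_i$ and $S_{B,2}(x)\le 2-\sum_i b_i$ to obtain
$$\sum_{i=1}^n \Bigl(\langle u_{a_i}+w_{b_i},x^i\rangle + (a_i+b_i)\Bigr)\le 4.$$
Next, a nine-case inspection of the vectors $u_a+w_b$ for $(a,b)\in\{0,1,2\}^2$ reveals the crucial pattern: $u_a+w_b=(0,0)$ precisely when $a+b\equiv 2\pmod 3$, so the $i$-th summand is identically $2$ in that case. In the other two congruence classes a short linear calculation on $\Delta_2$ gives that the $i$-th summand is at least $1$ if $a_i+b_i\equiv 1\pmod 3$ and at least $0$ if $a_i+b_i\equiv 0\pmod 3$, the extrema being attained on a facet of $\Delta_2$.

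Let $T=\{i:a_i+b_i\equiv 2\pmod 3\}$, $U=\{i:a_i+b_i\equiv 1\pmod 3\}$, and $V=\{i:a_i+b_i\equiv 0\pmod 3\}$. The term-by-term lower bounds combine with the summed inequality to give $2|T|+|U|\le 4$. The hypothesis $|V|<n-1$ becomes $|T|+|U|\ge 2$, and the congruence $\sum_i(a_i+b_i)\equiv 1\pmod 3$ translates to $2|T|+|U|\equiv 1\pmod 3$. Since $2|T|+|U|\ge |T|+|U|\ge 2$, the values $2$ and $3$ are excluded by the congruence, forcing $2|T|+|U|=4$ and hence equality throughout the chain.

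Equality in the summed inequality, together with the individual inequalities $S_{A,1}(x)\le 2-\sum_i a_i$ and $S_{B,2}(x)\le 2-\sum_i b_i$ having nonnegative slack, forces $S_{A,1}(x)=2-\sum_i a_i$ and $S_{B,2}(x)=2-\sum_i b_i$ simultaneously, so $x\in H_{A,1}\cap H_{B,2}$. This is a proper affine subspace of $\RR^{2n}$, so $\fC_{2,n}\cap H_{A,1}^-\cap H_{B,2}^-$ is not full-dimensional and its lattice volume vanishes. The only real obstacle is the nine-case bookkeeping needed to identify the term-by-term minima, which becomes transparent once one notices the identity $u_a+w_b=0\iff a+b\equiv 2\pmod 3$.
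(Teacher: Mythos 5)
Your proof is correct and follows essentially the same route as the paper: sum the two defining inequalities, bound the contribution of each index from below, and use the congruence $\sum_i(a_i+b_i)\equiv 1\pmod 3$ together with the hypothesis $|V|\le n-2$ to force equality throughout, hence containment in the hyperplane $H_{A,1}$. The only difference is that the paper first normalizes to $A=\mathbf{0}$ via the $\ZZ_3^n$-action, which collapses your nine-case inspection of $u_a+w_b$ to the three cases $u_0+w_b$.
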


\begin{proof}
Without loss of generality, we may assume $A=\mathbf{0}$. Otherwise, we can reach this situation by acting with $A$ taken as an element of $\ZZ_3^n.$ 

We note that $x_1^i+2x_2^i+\langle w_{b_i},x^i\rangle \ge 0$ for all possible values of $b_i$. Next, we sum up inequalities for $A=\mathbf{0}$ and $B$, and use this to obtain:

$$4-\sum_{i=1}^n b_i\ge \sum_{i=1}^n (x_1^i+2x_2^i+\langle w_{b_i},x^i\rangle)\ge 0.$$

If $\sum_{i=1}^n b_i >4$, this implies that $\fC_{2,n}\cap H_{A,1}^-\cap H_{B,2}^-$ is empty. Moreover, $\sum_{i=1}^n b_i\equiv 1\pmod 3$, and $\sum_{i=1}^n b_i >1$, since there are at least two indices $i$ such that $b_i\neq 0$. This means we have to consider only the case $\sum_{i=1}^n b_i=4$. In this case, there must be equality in all inequalities above. In particular, $S_{\mathbf{0},1}=2$ which implies that $\fC_{2,n}\cap H_{\mathbf{0},1}^-\cap H_{B,2}^-$ is not full-dimensional. 
\end{proof}

\begin{Lemma}\label{z3-twotwo}
For any $n$-tuples $A,\ B,\ C,\ D\in \{0,1,2\}^n$ with $\sum_{i=1}^n {a_i}\equiv\sum_{i=1}^n b_i\equiv \sum_{i=1}^n c_i\equiv \sum_{i=1}^n d_i \equiv 2\pmod 3$ and $A\neq B,\ C\neq D$, we have that $\fC_{2,n}\cap H_{A,1}^-\cap H_{B,1}^- \cap H_{C,2}^-\cap H_{D,2}^-$ is not full-dimensional and its (lattice) volume is zero.
\end{Lemma}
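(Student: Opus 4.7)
My plan is to run the argument by case analysis on the sizes of the coordinate-difference sets $|\{i: a_i \neq b_i\}|$ and $|\{i: c_i \neq d_i\}|$. First I would observe that since $A \neq B$ with $\sum a_i \equiv \sum b_i \pmod 3$, the set $\{i: a_i \neq b_i\}$ must contain at least two indices (a single differing coordinate would shift the sum by $\pm 1$ or $\pm 2 \pmod 3$, never $0$). Thus each such difference-set has size either exactly $2$ or strictly greater than $2$, and similarly for $(C, D)$.

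If either pair differs in more than two positions, I apply Lemma~\ref{z3-emptyintersection} directly: $\fC_{2,n} \cap H_{A,1}^- \cap H_{B,1}^-$ (respectively $\fC_{2,n} \cap H_{C,2}^- \cap H_{D,2}^-$) is already not full-dimensional, so the full fourfold intersection is contained in the same hyperplane and its lattice volume is zero.

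In the remaining case both pairs differ in exactly two positions, so Lemma~\ref{z3-twoofsametype} applies to each pair. Applying it to $(A, B)$ with $g = 1$ produces $A' \in \{0,1,2\}^n$ with $\sum a'_i \equiv 2 \pmod 3$ and $\fC_{2,n} \cap H_{A,1}^- \cap H_{B,1}^- \subset H_{A',2}^-$; applying it to $(C, D)$ with $g = 2$ produces $C' \in \{0,1,2\}^n$ with $\sum c'_i \equiv 2 \pmod 3$ and $\fC_{2,n} \cap H_{C,2}^- \cap H_{D,2}^- \subset H_{C',1}^-$. Consequently the polytope in question is contained in
\[
\fC_{2,n} \cap H_{A,1}^- \cap H_{C',1}^- \cap H_{A',2}^- \cap H_{C,2}^-,
\]
which exhibits two new same-type pairs $(A, C')$ and $(A', C)$. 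I would now recursively apply the same case analysis: Lemma~\ref{z3-emptyintersection} closes the easy sub-cases in which the new pairs differ in more than two positions, Lemma~\ref{z3-twoofsametype} strips another same-type pair in favor of a cross-type constraint, and Lemma~\ref{z3-twoofdiftype} is invoked when some mixed pair that emerges satisfies $|\{i: a_i + c_i \equiv 0 \pmod 3\}| < n-1$, immediately yielding non-full-dimensionality.

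The main obstacle will be guaranteeing termination of this recursion and handling the residual boundary configurations in which the reductions collapse. Here I would exploit the explicit normalized form of the auxiliary tuples delivered by Lemma~\ref{z3-twoofsametype} (after its proof's normalization, $A'$ is essentially supported on the single index where the two differing positions of $A, B$ are ``absorbed''), which keeps the supports small and ensures that after at most a bounded number of rounds, either some emerging mixed pair meets the hypothesis of Lemma~\ref{z3-twoofdiftype}, or else a direct summation of the currently active inequalities combined with the non-negativity constraints $x_g^i \geq 0$ forces equality in one of them and places the polytope in a proper affine subspace of $\RR^{2n}$.
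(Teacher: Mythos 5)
There is a genuine gap in the hard case of your argument, and you essentially acknowledge it yourself in the last paragraph. When both pairs differ in exactly two positions, Lemma~\ref{z3-twoofsametype} hands you two new same-type pairs $(A,C')$ and $(A',C)$, and you propose to recurse. But nothing in your setup guarantees progress: no quantity decreases from one round to the next, and you have not ruled out the degenerate configurations where the derived tuple coincides with a constraint already present (e.g.\ $C'\in\{C,D\}$ or $A'\in\{A,B\}$), in which case the recursion is a fixed point and produces no new information. The closing claim that ``after at most a bounded number of rounds'' either Lemma~\ref{z3-twoofdiftype} applies or ``a direct summation of the currently active inequalities forces equality'' is precisely the statement that needs proof; as written it is an assertion, not an argument. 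So the proof is not complete.

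The irony is that the mixed pairs you are waiting to ``emerge'' are available from the very start, and using them directly is how the paper closes the lemma in a few lines. Assume the fourfold intersection is full-dimensional. Then each of the mixed intersections $\fC_{2,n}\cap H_{X,1}^-\cap H_{Y,2}^-$ for $X\in\{A,B\}$, $Y\in\{C,D\}$ is full-dimensional, so by (the contrapositive of) Lemma~\ref{z3-twoofdiftype}, together with $\sum x_i+\sum y_i\equiv 1\pmod 3$, each sum $X+Y$ must be congruent to $E_j$ for a single index $j$. Writing $A+C\equiv E_1$ and $B+C\equiv E_2$ (with $E_1\neq E_2$ since $A\neq B$) gives $A-B\equiv E_1-E_2$; applying the same constraint to $A+D$ and $B+D$, whose difference is again $A-B\equiv E_1-E_2$, forces $A+D\equiv E_1$ and $B+D\equiv E_2$, hence $D\equiv C$, contradicting $C\neq D$. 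Your case split on $|\{i:a_i\neq b_i\}|$ and the use of Lemmas~\ref{z3-emptyintersection} and~\ref{z3-twoofsametype} is sound as far as it goes, but to have a complete proof you should either replace the recursion by this direct modular-arithmetic argument or supply an actual termination and degenerate-case analysis.
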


\begin{proof}
We assume by contradiction that $\fC_{2,n}\cap H_{A,1}^-\cap H_{B,1}^- \cap H_{C,2}^-\cap H_{D,2}^-$ is full-dimensional. By Lemma~\ref{z3-twoofdiftype}, we know that $|\{i\in[n]: a_i+c_i\equiv 0\pmod 3\}|=n-1$. This cardinal cannot be $n$, since $\sum_{i=1}^n a_i+\sum_{i=1}^n c_i\equiv 2+2\equiv 1\pmod 3$. Without loss of generality, $A+C\equiv E_1\pmod 3$. Analogously, without loss of generality $B+C=E_2\pmod 3$. Note that we can not have $A+C\equiv B+C\pmod 3$ since this would imply $A=B$.
This implies $A-B\equiv E_1+2E_2\pmod 3$. 

Now we look at the sums $A+D$ and $B+D$ modulo 3. By the same arguments $a_i+d_i\equiv 0\pmod 3$ for all but one index $i$. The same is true for $b_i+d_i$. However, $(A+D)-(B+D)\equiv A-B\equiv E_1+2E_2\pmod 3$. There is only one way this is possible and that is $A+D\equiv E_1$ and $B+D\equiv E_2$. However, this implies $C=D$, which is a contradiction.
\end{proof}

\begin{Lemma}\label{z3-onefacet}
For all $g\in\{1,2\}$ and all $n$-tuples $A\in \{0,1,2\}^n$, the lattice volume in the lattice $\ZZ^{2n}$ of the polytope $\fC_{2,n}\cap H_{A,g}^-$ is equal to $2^n-n/2^{n-1}.$    
\end{Lemma}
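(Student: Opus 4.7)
The plan is to follow the same pattern as Lemma~\ref{z22-1facet}. First, by the transitive $\ZZ_3^n$ action on $\RR^{2n}$ which preserves both $\fC_{2,n}$ and the lattice $\ZZ^{2n}$, and permutes the hyperplanes $H_{A',1}^-$ among themselves (exactly the reduction step already used in the proof of Lemma~\ref{z3-emptyintersection}), together with the $\Aut(\ZZ_3)=\ZZ_2$ automorphism that exchanges $u_k\leftrightarrow w_k$ and hence $g=1\leftrightarrow g=2$, we reduce to computing the lattice volume of $\fC_{2,n}\cap H_{\mathbf{0},1}^-$. Written out, this is the polytope defined by $x_1^i,x_2^i\ge 0$, $x_1^i+x_2^i\le 1$ for $1\le i\le n$, and the single extra inequality $\sum_{i=1}^n(x_1^i+2x_2^i)\le 2$.

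Now I apply inclusion-exclusion just as in Lemma~\ref{z22-1facet}: drop the constraints $x_1^i+x_2^i\le 1$ and subtract back the pieces where they are violated. Set
$$Q_0:=\Bigl\{x\in\RR^{2n}_{\ge 0}:\sum_{i=1}^n(x_1^i+2x_2^i)\le 2\Bigr\},\qquad Q_B:=Q_0\cap\bigcap_{i\in B}\{x_1^i+x_2^i\ge 1\}$$
for $B\subset[n]$. If $|B|\ge 2$, pick $i\ne j$ in $B$: then $x_1^i+2x_2^i+x_1^j+2x_2^j\ge (x_1^i+x_2^i)+(x_1^j+x_2^j)\ge 2$, and combining with the defining inequality of $Q_0$ forces equality throughout, in particular $x_2^i=x_2^j=0$. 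Hence $Q_B$ is not full-dimensional for $|B|\ge 2$, and by inclusion-exclusion (using the symmetry between the $n$ factors)
$$V_{\ZZ^{2n}}(\fC_{2,n}\cap H_{\mathbf{0},1}^-)=V_{\ZZ^{2n}}(Q_0)-n\cdot V_{\ZZ^{2n}}(Q_{\{1\}}).$$

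The two remaining volumes are computed as simplex determinants. The polytope $Q_0$ is the simplex with vertices $0$, $2e_1^i$, $e_2^i$ ($i\in[n]$); the matrix of edge vectors from $0$ is block-diagonal with $n$ blocks $\diag(2,1)$, giving $V_{\ZZ^{2n}}(Q_0)=2^n$. For $Q_{\{1\}}$ one enumerates vertices by checking which $2n$ equalities among the defining inequalities can hold simultaneously; this yields exactly $2n+1$ vertices, namely $e_1^1$, $e_2^1$, $2e_1^1$, and $e_1^1+e_1^j$, $e_1^1+\tfrac12 e_2^j$ for $2\le j\le n$. Hence $Q_{\{1\}}$ is a simplex, and taking $e_1^1$ as the base vertex the edge-vector matrix is block-diagonal with first block $\left(\begin{smallmatrix}-1&1\\1&0\end{smallmatrix}\right)$ of determinant $-1$ and $n-1$ further blocks $\diag(1,\tfrac12)$ of determinant $\tfrac12$. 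Consequently $V_{\ZZ^{2n}}(Q_{\{1\}})=1/2^{n-1}$, and substitution gives the claimed $2^n-n/2^{n-1}$.

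The main obstacle, as in Lemma~\ref{z22-1facet}, is the vertex enumeration for $Q_{\{1\}}$: one must detect both the unusual vertex $2e_1^1$ (at which $x_1^1+x_2^1>1$ strictly) and the non-integral vertices $e_1^1+\tfrac12 e_2^j$, whose fractional coordinate is exactly what produces the $1/2^{n-1}$ correction term; conversely, tempting candidates such as $e_2^1+e_1^j$ for $j\ge 2$ must be ruled out by the active-equality count. Once the vertex set is correctly pinned down, the two determinant computations are routine.
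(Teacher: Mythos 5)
Your proposal is correct and follows essentially the same route as the paper: the same reduction to $A=\mathbf{0}$, $g=1$ via the group action, the same decomposition $V(Q_0)-n\,V(Q_{\{1\}})$ after showing the pairwise pieces are lower-dimensional, and the same simplex vertex lists and determinant computations (the paper justifies that $Q_{\{1\}}$ is a simplex by noting the redundancy of $x_1^1\ge 0$, leaving $2n+1$ facets, which is a slightly cleaner way to certify your vertex enumeration). No substantive differences.
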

\begin{proof}
Once again, we only consider the case $g=1$. Since $A(\fC_{2,n}\cap H_{A,1}^-)=\fC_{2,n}\cap H_{\mathbf{0},1}^-$, where we consider $A$ as an element of $\ZZ_3^n$, we may assume $A=\mathbf{0}.$

We define the polytopes $Q_0$ and $Q_j$, for $j\in[n]$ by the following inequalities:

\begin{itemize}
\item $x_g^i\ge 0$, for all $i\in[n]$, $g\in\{1,2\},$
\item $S_{\mathbf{0},1}(x)\le 2$,
\item $x_1^j+x_2^j\ge 1$.
\end{itemize}

For the definition of the polytope $Q_0$ we simply omit the last inequality. 

Firstly, we notice that $Q_{j_1}\cap Q_{j_2}$ is not full-dimensional for any $j_1,j_2\in[n]$, $j_1\neq j_2$. Indeed for any point $x\in Q_{j_1}\cap Q_{j_2}$ we have:

$$2\ge \sum_{i=1}^n x_1^i+2x_2^i\ge x_1^{j_1}+x_2^{j_1}+x_1^{j_2}+x_1^{j_2}\ge 2.$$

In particular, this means that $Q_{j_1}\cap Q_{j_2}\subset H_{\mathbf{0},1}$ and therefore it is not full-dimensional.

It is straightforward to check that $$\fC_{2,n}\cap H_{\mathbf{0},1}^-=Q_0\setminus\bigcup_{i=1}^n Q_i$$
which implies
$$ (\bigstar):\;\;\;\; V_{\ZZ^{2n}}(\fC_{2,n}\cap H_{\mathbf{0},1}^-)=V_{\ZZ^{2n}}(Q_0)-\sum_{i=1}^n V_{\ZZ^{2n}}(Q_i).$$

We will conclude the lemma by computing lattice volumes of polytopes $Q_j$. We start with the polytope $Q_0$ which is clearly a simplex with vertices $0,2e_1^i, e_2^i$ for $i\in[n]$. Therefore, $V_{\ZZ^{2n}}(Q_0)=2^n.$ 

We continue with the polytope $Q_1$, all other $Q_j$ are analogous. We will show that the inequality $x_1^1\ge 0$ in the facet description of $Q_1$ is redundant because it is a consequence of all other inequalities:

$$2\ge \sum_{i=1}^n(x_1^i+2x_2^i)\ge (x_1^1+2x_2^1)=2(x_1^1+x_2^1)-x_1^1\ge 2-x_1^1, $$
from which follows the desired conclusion. Thus, $Q_1$ is defined by $2n+1$ inequalities, and therefore it is a simplex (providing that it is full-dimensional). We simply list all of its $2n+1$ vertices, it is easy to check that they, indeed, are vertices of the simplex $Q_1$:

$$e_1,\ 2e_1,\ e_2,\ e_1^1+e_1^i,\ e_1^1+\frac 12 e_2^i \text{ for all } 2\le i\le n.$$

We can subtract the vertex $e_1$ from all vertices and compute the lattice volume of this simplex as a determinant of the matrix with rows:

$$e_1,\ e_2-e_1,\ e_1^i,\ \frac 12 e_2^i \text{ for all } 2\le i\le n,$$
which is, clearly, $(1/2)^{n-1}$. We now plug the volumes of $Q_i$ in $(\bigstar)$ to conclude the result:

$$V_{\ZZ^{2n}}(\fC_{2,n}\cap H_{\mathbf{0},1}^-)=2^n-\sum_{i=1}^n \frac{1}{2^{n-1}}=2^n-\frac{n}{2^{n-1}}.$$

\end{proof}

\begin{Lemma}\label{z3-twofacet}
For all different $n$-tuples $A,\ B\in \{0,1,2\}^n$ with $\sum_{i=1}^n {a_i}+\sum_{i=1}^n b_i \equiv 1\pmod 3$, and $|\{i\in[n]: a_i+b_i\equiv 0\pmod 3\}|=n-1$ the lattice volume in the lattice $\ZZ^{2n}$ of the polytope $\fC_{2,n}\cap H_{A,1}^-\cap H_{B,2}^-$ is equal to $3-1/2^{n-2}.$
\end{Lemma}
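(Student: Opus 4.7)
I mimic the approach of Lemma~\ref{z3-onefacet}. First, using the $\ZZ_3^n$ action, I translate by $A$ (viewed as an element of $\ZZ_3^n$) to reduce to $A=\mathbf{0}$. The condition $|\{i:a_i+b_i\equiv 0\pmod 3\}|=n-1$ then forces $b_i=0$ at $n-1$ indices, so $B$ has a single nonzero coordinate; since $\sum b_i\equiv 1\pmod 3$, that coordinate must equal $1$. After permuting leaves via the $\mathbb S_n$ action, I may assume $B=E_1$, reducing the problem to computing $V_{\ZZ^{2n}}(\fC_{2,n}\cap H_{\mathbf{0},1}^-\cap H_{E_1,2}^-)$.

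Second, I introduce an auxiliary polytope $Q_0$ by dropping the facet inequalities $x_1^i+x_2^i\le 1$: it is cut out by the non-negativity constraints together with $S_{\mathbf{0},1}(x)\le 2$ and $S_{E_1,2}(x)\le 1$. For $j\in[n]$, set $Q_j=Q_0\cap\{x:x_1^j+x_2^j\ge 1\}$. Then
\[
\fC_{2,n}\cap H_{\mathbf{0},1}^-\cap H_{E_1,2}^- \;=\; Q_0\setminus\bigcup_{j=1}^n Q_j.
\]
For $j\neq j'$, adding the inequalities $x_1^j+x_2^j\ge 1$ and $x_1^{j'}+x_2^{j'}\ge 1$ and comparing with $S_{\mathbf{0},1}(x)=\sum_i(x_1^i+2x_2^i)\le 2$ forces equality throughout, so $Q_j\cap Q_{j'}\subset H_{\mathbf{0},1}$ and has lattice volume $0$. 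Inclusion-exclusion then yields
\[
V_{\ZZ^{2n}}(\fC_{2,n}\cap H_{\mathbf{0},1}^-\cap H_{E_1,2}^-) \;=\; V_{\ZZ^{2n}}(Q_0) \;-\; \sum_{j=1}^n V_{\ZZ^{2n}}(Q_j).
\]

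Third, I compute each piece explicitly. For $Q_0$, I expect several of the non-negativity inequalities to become redundant once $S_{\mathbf{0},1}(x)\le 2$ and $S_{E_1,2}(x)\le 1$ are imposed, cutting the facet count down to $2n+2$ and identifying $Q_0$ as a (lattice) polytope whose vertex set can be listed directly from the extremal halfspaces; its lattice volume should turn out to be the integer $3$. For the $Q_j$ I expect, as in Lemma~\ref{z3-onefacet}, that exactly one non-negativity inequality becomes redundant, making each $Q_j$ a simplex whose vertices are either lattice points or half-integral points of the form $\tfrac12 e_g^i$, so that the lattice volume can be read off as the determinant of a lower-triangular matrix. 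The index $j=1$ will be special, because the coefficient $(-1,1)$ of $x^1$ in $S_{E_1,2}$ has mixed sign and interacts differently with $x_1^1+x_2^1\ge 1$ than with $x_1^j+x_2^j\ge 1$ for $j\ge 2$; nevertheless, when the contributions $V(Q_1)$ and $V(Q_j)$ $(j\ge 2)$ are added and subtracted from $V(Q_0)=3$, the result should collapse to the clean closed form $3-1/2^{n-2}$.

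The main obstacle is step three: correctly handling the asymmetry introduced by the inequality $S_{E_1,2}(x)\le 1$ on the first leaf. Redundancy analysis for $Q_0$ and for $Q_1$ must be done carefully (distinct from the generic $Q_j$ with $j\ge 2$), as must the explicit enumeration of vertices. Recovering the precise $1/2^{n-2}$ tail term hinges on getting the half-integral vertices of the $Q_j$ and the combinatorics of the special index $j=1$ exactly right.
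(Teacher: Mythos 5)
Your reduction to $A=\mathbf{0}$, $B=E_1$ is legitimate (note that it silently uses the fact that the two families of halfspaces transform \emph{oppositely} under the $\ZZ_3^n$ action, so that $A+B$ rather than $A-B$ is the invariant -- this is why the hypothesis is stated in terms of $a_i+b_i$), and the decomposition $Q_0\setminus\bigcup_j Q_j$ with pairwise intersections of volume zero is valid. The gap is in step three: under your normalization the predicted structure is simply false. The polytope $Q_0=\{x\ge 0\}\cap\{S_{\mathbf{0},1}\le 2\}\cap\{S_{E_1,2}\le 1\}$ is neither a simplex nor a lattice polytope. Already for $n=2$ its vertices are $0,\ 2e_1^1,\ e_2^1,\ e_2^2,\ e_1^1+e_1^2$ and the half-integral point $\tfrac12 e_1^2$; that is $3n=6$ vertices (a $2n$-simplex would have $2n+1=5$), and its lattice volume is $\tfrac52$, not $3$. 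Moreover the asymmetry of $S_{E_1,2}\le 1$ kills almost all of your cut-off pieces: one checks that $Q_0\subset\{x_1^j+x_2^j\le 1\}$ for every $j\ge 2$, so $Q_j$ is degenerate for $j\ge 2$ and only $Q_1$ contributes (with volume $\tfrac12$ when $n=2$, giving $\tfrac52-\tfrac12=2=3-1/2^{n-2}$, consistent with the statement). So the total is right, but your route requires triangulating a non-simplex $Q_0$ and the ``clean collapse'' you anticipate does not occur as described.

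The paper avoids all of this by a different normalization: acting with $A-2E_1$ (again using the opposite transformation laws) one may take $A=B=2E_1$, so that \emph{both} inequalities become $S_{2E_1,1}(x)\le 0$ and $S_{2E_1,2}(x)\le 0$ with right-hand side zero. Summing them gives $x_1^1+x_2^1\ge\sum_{i\ge 2}(x_1^i+x_2^i)$, which together with $x_1^1+x_2^1\le 1$ makes every constraint $x_1^i+x_2^i\le 1$, $i\ge 2$, automatic. Hence the only constraints of $\fC_{2,n}$ that can fail are $x_1^1\ge 0$ and $x_2^1\ge 0$ at the single distinguished leaf, and one subtracts exactly two simplices of lattice volume $1/2^{n-1}$ from a genuine simplex $Q_0$ of lattice volume $3$ (the determinant of $\left(\begin{smallmatrix}2&-1\\-1&2\end{smallmatrix}\right)$). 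I would recommend switching to this symmetric normalization; otherwise you must redo the vertex enumeration and volume computation of your $Q_0$ and $Q_1$ from scratch, which is considerably more work.
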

\begin{proof}
Without loss of generality, we may assume that $a_1+b_1\equiv 1\pmod 3$ and $a_i+b_i\equiv 0\pmod 3$ for $i\ge 2$. Analogously, as in the previous lemmas, we may assume $A=2E_1$ and, therefore, $B=2E_1$. Indeed, if this is not the case, we may get to this situation by acting with $A-(2,0,0,\dots,0)$, where we consider $A$ as an element of $\ZZ_3^n$.

Similarly, as in Lemma~\ref{z3-onefacet}, we introduce polytopes $Q_j$ for $j\in{0,1,2}$ by the following facet description:

\begin{itemize}
\item $x_g^i\ge 0$, for all $2\le i \le n$, $g\in\{1,2\},$
\item $x_1^1+x_2^1\le 1$
\item $S_{2E_1,1}(x)\le 0$, $S_{2E_1,2}(x)\le 0$,
\item $x_1^j\le 0$.
\end{itemize}

For the definition of $Q_0$ we simply omit the last inequality. Firstly, we notice that for any $x\in Q_0$, and, thus, also for any $x\in Q_1,Q_2$ it holds:

$$0\ge S_{2E_1,1}(x)+S_{2E_1,2}(x)=3\sum_{i=2}^n (x_1^i+x_2^i)-3(x_1^1+x_2^1)\Leftrightarrow x_1^1+x_2^1\ge \sum_{i=2}^n (x_1^i+x_2^i). $$

Since $x_1^1+x_2^1\le 1$, it follows that $x_1^i+x_2^i\le 1$ for all $2\le i\le n$. This implies that $Q_0\setminus(Q_1\cup Q_2)=\fC_{2,n}\cap H_{2E_1,1}^-\cap H_{2E_1,2}^-$. Moreover, $Q_1\cap Q_2$ is not full-dimensional (actually it is just a point), since $x_1^1,x_2^1\le 0$ yields in $x_1^1+x_2^1\le 0$ and, therefore, $x_1^1=x_2^1=0$.

Thus, $$ (\bigstar \bigstar):\;\;\;\; V_{\ZZ^{2n}}\fC_{2,n}\cap H_{2E_1,1}^-\cap H_{2E_1,2}^-=V_{\ZZ^{2n}}(Q_0)-V_{\ZZ^{2n}}(Q_1)-V_{\ZZ^{2n}}(Q_2).$$

It remains to compute the volume of the polytopes $Q_i$. We start with the polytope $Q_0$. Since it is defined by $2n+1$ inequalities, it is a simplex. Here is the list of all of its vertices:

$$0,\ 2e_1-e_2,\ 2e_2-e_1,\  e_2^1+e_1^i,\ e_1^1+e_2^i, \text{ for all } 2\le i\le n.$$

The lattice volume of $Q_0$ is equal to the determinant of the matrix whose rows are all of its vertices except 0. It is easy to check that this determinant is equal to $$\det\begin{pmatrix}
2 &-1\\
-1& 2
\end{pmatrix}=4-1=3.$$

We continue with the polytope $Q_2$, since, by symmetry, the lattice volume of $Q_1$ is the same. We will show that the inequality $S_{(2,0,0,\dots,0),1}(x)\le 0$ in the facet description of $Q_2$ is redundant and, therefore, $Q_2$ is also a simplex. Indeed, we have 

$$S_{2E_1,1}=\sum_{i=2}^n (x_1^i+2x_2^i)-2x_1^1-x_2^1\le \sum_{i=2}^n (4x_1^i+2x_2^i)-2x_1^1-4x_2^1=2S_{2E_1,2}\le 0. $$

Next, we list all of the vertices of $Q_2$, it can be easily verified that they are, in fact, vertices of $Q_2$:

$$0,\ e_1^1,\ 2e_1^1-e_2^1,\ e_1^1+\frac 12 e_1^i,\ e_1^1+e_2^i,$$

The lattice volume of $Q_2$ is the absolute value of the determinant of the matrix whose rows are all of the vertices except 0. Since it is a lower triangular matrix, its determinant is equal to $(-1)\cdot (1/2)^{n-1}$. Thus, $L_{\ZZ^{2n}}(Q_2)=1/2^{n-1}$. We conclude the result by plugging in $(\bigstar \bigstar)$:

$$V_{\ZZ^{2n}}\fC_{2,n}\cap H_{2E_1,1}^-\cap H_{2E_1,2}^-=3-\frac 1{2^{n-1}}-\frac 1{2^{n-1}}=3-\frac 1{2^{n-2}}.$$
\end{proof}

\begin{Theorem}
 For all $n\ge 2$, the lattice volume of the polytope $P_{\ZZ_3,n}$ in the lattice $L_{\ZZ_3,n}$ is equal to $$\frac{(2n)!}{3\cdot 2^n}-2^{n+1}\cdot 3^{n-2}+3^{n-1}\cdot n.$$   
\end{Theorem}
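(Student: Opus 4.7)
The plan is to follow the strategy of the previous sections. By Theorem~\ref{facetsZ3} we can write
\[
P_{\ZZ_3,n} = \fC_{2,n} \setminus \bigcup_{(A,g)\in\mathcal{A}} \bigl(\fC_{2,n}\cap H_{A,g}^{-}\bigr),
\]
where $\mathcal{A}=\{(A,g): A\in\{0,1,2\}^n,\ \sum_i a_i\equiv 2\pmod 3,\ g\in\{1,2\}\}$ has $|\mathcal{A}|=2\cdot 3^{n-1}$, and $V_{\ZZ^{2n}}(\fC_{2,n})=(2n)!/2^n$ since $\fC_{2,n}$ is a product of $n$ unit $2$-simplices. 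I will apply inclusion-exclusion and use the preceding lemmas to identify the nonzero contributions.

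By Lemma~\ref{z3-onefacet} the singleton terms contribute $-2\cdot 3^{n-1}(2^n-n/2^{n-1})$. For a pair $\{(A,g_1),(B,g_2)\}$ with $g_1\neq g_2$, Lemma~\ref{z3-twoofdiftype} forces zero volume unless $|\{i:a_i+b_i\equiv 0\pmod 3\}|=n-1$, in which case Lemma~\ref{z3-twofacet} gives volume $3-1/2^{n-2}$. Fixing $A$ among $3^{n-1}$ valid tuples and then the unique exceptional index $j\in[n]$ (which determines $B$), I count $n\cdot 3^{n-1}$ such pairs, contributing $+n\cdot 3^{n-1}(3-1/2^{n-2})$.

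The main obstacle is to show that every other term in the inclusion-exclusion cancels. I plan to exhibit a sign-reversing, volume-preserving involution $\iota$ on the subsets $S\subseteq\mathcal{A}$ that contain at least one same-$g$ pair. Fix a lex order on $\mathcal{A}$; given such an $S$, let $\{(A,g),(B,g)\}$ be the lex-smallest same-$g$ pair in $S$, and let $C_{AB}$ be the element of the opposite type constructed in the proof of Lemma~\ref{z3-twoofsametype}. Define $\iota(S)$ by toggling the membership of $(C_{AB},-g)$. Since toggling does not alter the same-$g$ part of $S$, the distinguished pair is preserved and $\iota^2=\mathrm{id}$; since $\fC_{2,n}\cap H_{A,g}^-\cap H_{B,g}^-\subseteq H_{C_{AB},-g}^-$, adding or removing $(C_{AB},-g)$ does not change $\fC_{2,n}\cap\bigcap_{(A',g')\in S}H_{A',g'}^-$, so $V_{\iota(S)}=V_S$; and since $|\iota(S)|=|S|\pm 1$, the inclusion-exclusion signs of $S$ and $\iota(S)$ are opposite and cancel. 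When the distinguished pair has $|\{i:a_i\neq b_i\}|>2$, both volumes vanish by Lemma~\ref{z3-emptyintersection}, so the construction remains consistent.

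The subsets unpaired by $\iota$ are precisely those with at most one element of each type: the empty set (contributing $V_{\ZZ^{2n}}(\fC_{2,n})$), the singletons, and the diff-$g$ pairs. Combining these,
\[
V_{\ZZ^{2n}}(P_{\ZZ_3,n}) = \frac{(2n)!}{2^n} - 2\cdot 3^{n-1}\Bigl(2^n-\frac{n}{2^{n-1}}\Bigr) + n\cdot 3^{n-1}\Bigl(3-\frac{1}{2^{n-2}}\Bigr).
\]
The two $n$-dependent terms cancel (since $2\cdot 3^{n-1}\cdot n/2^{n-1}=n\cdot 3^{n-1}/2^{n-2}$), leaving $(2n)!/2^n-3^{n-1}\cdot 2^{n+1}+3^n\cdot n$, and dividing by $[\ZZ^{2n}:L_{\ZZ_3,n}]=3$ yields the claimed formula.
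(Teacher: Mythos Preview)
Your approach via a sign-reversing involution is a natural alternative to the paper's argument, but the involution you describe is not actually an involution, and the justification that ``toggling does not alter the same-$g$ part of $S$, [so] the distinguished pair is preserved'' is mistaken. Toggling $(C_{AB},-g)$ leaves the type-$g$ elements of $S$ untouched, but it can create a new same-type pair of type $-g$; if that pair is lex-smaller than $\{(A,g),(B,g)\}$ it becomes the distinguished pair of $\iota(S)$. Concretely, take $S=\{(D,1),(A,2),(B,2)\}$ with $|\{i:a_i\neq b_i\}|=2$ and $(C_{AB},1)\neq(D,1)$. The only same-type pair in $S$ is $\{(A,2),(B,2)\}$, so $\iota(S)=S\cup\{(C_{AB},1)\}$, which now contains the type-$1$ pair $\{(D,1),(C_{AB},1)\}$. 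Under any lex order placing type~$1$ before type~$2$, this becomes the distinguished pair of $\iota(S)$, and $\iota^{2}(S)$ then toggles some type-$2$ element rather than $(C_{AB},1)$, so $\iota^{2}(S)\neq S$.

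The missing ingredient is precisely Lemma~\ref{z3-twotwo}, which you never invoke: it says that any intersection involving two distinct type-$1$ and two distinct type-$2$ halfspaces has volume zero. In the counterexample above, Lemma~\ref{z3-twoofsametype} places the intersection inside $H_{C_{AB},1}^{-}$, and then Lemma~\ref{z3-twotwo} applied to $H_{D,1}^{-},H_{C_{AB},1}^{-},H_{A,2}^{-},H_{B,2}^{-}$ gives $V_{\ZZ^{2n}}$ of the intersection for $S$ equal to zero; likewise $\iota(S)$ has $\geq 2$ elements of each type, so its volume is zero directly. Thus the failure of $\iota^{2}=\mathrm{id}$ happens only on zero-volume terms and your cancellation scheme can be repaired, but as written the proof has a genuine gap. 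The paper sidesteps this entirely: rather than full inclusion--exclusion, it partitions the union into the disjoint pieces $R_M$ (points lying in exactly the halfspaces indexed by $M$) and checks by a short case analysis, using all four Lemmas~\ref{z3-emptyintersection}--\ref{z3-twotwo}, that each nonzero $V_{\ZZ^{2n}}(R_M)$ is counted exactly once by the simplified two-term formula~$(\spadesuit)$.
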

\begin{proof}
From the facet description of the polytope $P_{\ZZ_3,n}$, Theorem~\ref{facetsZ3}, we can see that
    $$P_{\ZZ_3,n}=\fC_{2,n}\setminus \left(\bigcup_{g\in\{1,2\}}\bigcup_{\substack{A\in \{0,1,2\}^n\\ \sum a_i\equiv 2\pmod 3}}\fC_{2,n}\cap H_{A,g}^-\right).$$

We start by computing the volume of $\fC_{2,n}$. Since it is a product of simplices with volume $1/2$, we have $V(\fC_{2,n})=(1/2)^{n}$. Therefore, $V_{\ZZ^{2n}}=(2n)!/2^n$.

We would like to compute the volume of $P_{\ZZ_3,n}$ by using the principle of inclusion and exclusion. However, it would get complicated since there are a lot of intersections with non-zero volume. Instead, we will use a simpler formula. We claim that 
$$ (\spadesuit):\;\;\;\;V_{\ZZ^{2n}}\left(\bigcup_{g\in\{1,2\}}\bigcup_{\substack{A\in \{0,1,2\}^n\\ \sum a_i\equiv 2\pmod 3}}\fC_{2,n}\cap H_{A,g}^-\right)=$$
$$=\sum_{g\in\{1,2\}}\sum_{\substack{A\in \{0,1,2\}^n\\ \sum a_i\equiv 2\pmod 3}}V_{\ZZ^{2n}}(\fC_{2,n}\cap H_{A,g}^-)-\sum_{\substack{A,B\in \{0,1,2\}^n\\ \sum a_i\equiv \sum b_i\equiv 2\pmod 3}}V_{\ZZ^{2n}}(\fC_{2,n}\cap H_{A,1}^-\cap H_{B,2}^-). $$

To prove this claim, we consider the set $\mathcal M=\{A\in\{0,1,2\}^n;\sum_{i=1}^n a_i\equiv 2 \pmod 3.\}\times \{1,2\}$. Let us denote for $M\subset \mathcal M$
$$R_M=\bigcup_{M\subset \mathcal M, M\neq \emptyset} \left(\fC_{2,n}\cap \left(\bigcup_{(A,g)\in M} H_{A,g}^-\right)\setminus \left(\bigcup_{(A,g)\in \mathcal M\setminus M} H_{A,g}^-\right) \right).$$

We have that 
$$\bigcup_{(A,g)\in\mathcal M} \left(\fC_{2,n}\cap H_{A,g}^-\right)=\bigcup_{M\subset \mathcal M, M\neq \emptyset} R_M$$
Since the sets $R_M$ are disjoint we have $$V_{\ZZ^{2n}}\left(\bigcup_{(A,g)\in\mathcal M} \left(\fC_{2,n}\cap H_{A,g}^-\right)\right)=\sum_{M\subset \mathcal M, M\neq \emptyset} V_{\ZZ^{2n}}(R_M)$$

Analogously,
$$V_{\ZZ^{2n}}(\fC_{2,n}\cap H_{A,g}^-)=\sum_{\substack{M\subset \mathcal M\\ (A,g)\in M}} V_{\ZZ^{2n}}(R_M),$$
$$V_{\ZZ^{2n}}(\fC_{2,n}\cap H_{A,1}^-\cap H_{B,2}^-)=\sum_{\substack{M\subset \mathcal M\\ (A,1),(B,2)\in M}} V_{\ZZ^{2n}}(R_M).$$

To prove $(\spadesuit)$ we write each summand as sum of $ V_{\ZZ^{2n}}(R_M)$ for suitable sets $M$. Then we need to check that every non-zero $V_{\ZZ^{2n}}(R_M)$ for $M\neq\emptyset$ appears at the right-hand side exactly once. We go through all possible sets $M$ to show that this is, indeed, the case:

\begin{itemize}
    \item $|M|=1$, i.e $M=\{(A,g)\}$: Clearly $R_M$ appears on the right-hand side only once - it is a subset of $\fC_{2,n}\cap H_{A,g}^-$.
    \item $|M|>1$, $M=\{(A_1,g),\dots, (A_k,g)\}$ for some $g\in\{1,2\}, k\ge 2$. By Lemmas \ref{z3-emptyintersection} and \ref{z3-twoofsametype} we see that either $(\fC_{2,n}\cap H_{A_1,g}^-\cap H_{A_2,g}^-)$ is not full-dimensional or $(\fC_{2,n}\cap H_{A_1,g}^-\cap H_{A_2,g}^-)$ is contained in some $H_{B,-g}$ for suitable $B$. In both cases it means that $V_{\ZZ^{2n}}(R_M)=0$.
    \item $|M|>1$, $M=\{(A_1,g),\dots, (A_k,g),(B,-g)\}$ for some $g\in\{1,2\}, k\ge 1$. In this case $R_M$ is a subset of $\fC_{2,n}\cap H_{A_i,g}^-$, $\fC_{2,n}\cap H_{B,-g}^-$ and $\fC_{2,n}\cap H_{A_i,g}^\cap H_{B,-g}^-$ for $1\le i \le k$. This implies that on the right-hand side term $V_{\ZZ^{2n}}(R_M)$ appears exactly $k+1-k=1$ times.
    \item $M=\{(A_1,1),\dots, (A_k,1),(B_1,2),\dots (B_l,2)\}$ for some $k,l\ge 2$. By Lemma \ref{z3-twotwo} we have $V(R_M)=0$.
\end{itemize}

Now we simply plug the volumes from Lemmas \ref{z3-twofacet} and \ref{z3-twofacet} in $(\spadesuit)$. Moreover, by Lemma \ref{z3-twoofdiftype}, in the second sum we may sum only through such $n$-tuples $A$ and $B$ for which $A+B\equiv E_j\pmod 3$ for some $j\in[n]$. This leads to:

$$V_{\ZZ^{2n}}\left(\bigcup_{g\in\{1,2\}}\bigcup_{\substack{A\in \{0,1,2\}^n\\ \sum a_i\equiv 2\pmod 3}}\fC_{2,n}\cap H_{A,g}^-\right)=$$
$$=\sum_{g\in\{1,2\}}\sum_{\substack{A\in \{0,1,2\}^n\\ \sum a_i\equiv 2\pmod 3}} \left(2^n-\frac{n}{2^{n-1}}\right)-\sum_{j=1}^n \sum_{\substack{A,B\in \{0,1,2\}^n\\ \sum a_i\equiv \sum b_i\equiv 2\pmod 3 \\
A+B\equiv E_J\pmod 3}} \left(3-\frac{1}{2^{n-2}}\right)= $$
$$=2\cdot 3^{n-1}\cdot \left(2^n-\frac{n}{2^{n-1}}\right) - n\cdot 3^{n-1}\cdot \left(3-\frac{1}{2^{n-2}}\right)=2^{n+1}\cdot 3^{n-1}-n\cdot 3^n. $$

We have that
$$V_{\ZZ^{2n}}(P_{\ZZ_3,n})=V_{\ZZ^{2n}}(\fC_{2,n})- V_{\ZZ^{2n}}\left(\bigcup_{g\in\{1,2\}}\bigcup_{\substack{A\in \{0,1,2\}^n\\ \sum a_i\equiv 2\pmod 3}}\fC_{2,n}\cap H_{A,g}^-\right)
$$ $$
=\frac{(2n)!}{2^n}-2^{n+1}\cdot 3^{n-1}+n\cdot 3^n.$$

Since we are interested in the lattice volume of the polytope $P_{\ZZ_3, n}$ in the lattice $L_{\ZZ_3,n}$, which is a sublattice of $\ZZ^{2n}$ of index 3, we simply divide by 3 to conclude the desired result:

$$V_{L_{\ZZ_3,n}}(P_{\ZZ_3,n})=\frac{(2n)!}{3\cdot 2^n}-2^{n+1}\cdot 3^{n-2}+n\cdot 3^{n-1}.$$

\end{proof}
\begin{Corollary}\label{phylodegZ3}
 The phylogenetic degree of the projective algebraic variety $X_{\ZZ_3,n}$ is equal to $$\frac{(2n)!}{3\cdot 2^n}-2^{n+1}\cdot 3^{n-2}+3^{n-1}\cdot n.$$    
\end{Corollary}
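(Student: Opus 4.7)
The plan is to deduce the corollary as an immediate consequence of the preceding theorem together with the standard dictionary between projective toric varieties and their associated lattice polytopes. Specifically, for any normal projective toric variety embedded by a very ample polytope $P$ sitting in a lattice $L$, its degree (as a projective variety) equals the normalized lattice volume $V_L(P)$; see \cite{fulton}, Section~5.3, or the references in the introduction.

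First, I would invoke the setup from Section~\ref{prel}: the projective variety $X_{\ZZ_3, n}$ is toric with associated polytope $P_{\ZZ_3, n}$, and the natural lattice in which this polytope lives is $L_{\ZZ_3, n}$, the lattice generated by the vertices of $P_{\ZZ_3, n}$. Next, I would appeal to the normality result for $\ZZ_3$ recalled in the introduction (from \cite{martinko, RM}), which guarantees that $P_{\ZZ_3, n}$ is a normal (hence very ample) lattice polytope. Consequently the degree-volume correspondence applies verbatim and yields
\[
\deg(X_{\ZZ_3, n}) \;=\; V_{L_{\ZZ_3, n}}(P_{\ZZ_3, n}).
\]

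Finally, substituting the explicit value $V_{L_{\ZZ_3, n}}(P_{\ZZ_3, n}) = \frac{(2n)!}{3\cdot 2^n}-2^{n+1}\cdot 3^{n-2}+3^{n-1}\cdot n$ from the preceding theorem delivers the claimed formula. There is essentially no obstacle in this step: all the combinatorial work has already been carried out in the theorem, and the passage from volume to degree is a textbook translation once the normality and toric structure are in place.
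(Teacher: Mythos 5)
Your proposal is correct and matches the paper's (implicit) argument: the corollary is deduced directly from the preceding theorem via the standard degree--volume correspondence for projective toric varieties cited in the introduction (\cite{fulton}, Section~5.3), with the normality of $P_{\ZZ_3,n}$ from \cite{martinko, RM} justifying the translation. The paper simply states the corollary without spelling this out, so your write-up is the same route with the routine details made explicit.
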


\section*{Acknowledgement} RD was supported by the Alexander von Humboldt Foundation, and a grant of the Ministry of Research, Innovation and Digitization, CNCS - UEFISCDI, project number PN-III-P1-1.1-TE-2021-1633, within PNCDI III.

MV was supported by Slovak VEGA grant 1/0152/22.

\vspace{.2in}

\end{document}